\title{Cacti and cells}
\author{Ivan Losev}
\address{Department
of Mathematics, Northeastern University, Boston MA 02115 USA}
\email{i.loseu@neu.edu}
\thanks{MSC 2010: 05E10, 17B35, 18E30}
\newcommand{\C}{\mathbb{C}}
\newcommand{\Z}{\mathbb{Z}}
\newcommand{\Hom}{\operatorname{Hom}}
\newcommand{\g}{\mathfrak{g}}
\newcommand{\Hecke}{\mathcal{H}}
\newcommand{\bc}{\mathbf{c}}
\newcommand{\bA}{\bar{A}}
\newcommand{\h}{\mathfrak{h}}
\newcommand{\bo}{\mathfrak{b}}
\newcommand{\Cact}{\mathsf{Cact}}
\newcommand{\U}{\mathcal{U}}
\newcommand{\OCat}{\mathcal{O}}
\newcommand{\HC}{\operatorname{HC}}
\newcommand{\Loc}{\operatorname{Loc}}
\newcommand{\WC}{\mathfrak{WC}}
\newcommand{\wc}{\mathfrak{wc}}
\newcommand{\B}{\mathcal{B}}
\newcommand{\Orb}{\mathbb{O}}
\newcommand{\Walg}{\mathcal{W}}
\newcommand{\VA}{\operatorname{V}}
\newcommand{\J}{\mathcal{J}}
\newcommand{\m}{\mathfrak{m}}
\newcommand{\gr}{\operatorname{gr}}
\newcommand{\Wf}{\mathfrak{W}}
\newcommand{\Coh}{\operatorname{Coh}}
\newcommand{\param}{\mathfrak{P}}
\newcommand{\Cat}{\mathcal{C}}
\newcommand{\z}{\mathfrak{z}}
\newcommand{\I}{\mathcal{I}}
\newtheorem{Thm}{Theorem}[section]
\newtheorem{Prop}[Thm]{Proposition}
\newtheorem{Cor}[Thm]{Corollary}
\newtheorem{Lem}[Thm]{Lemma}
\theoremstyle{definition}
\newtheorem{Ex}[Thm]{Example}
\numberwithin{equation}{section}
\begin{document}
\begin{abstract}
The goal of this paper is to construct an action of the cactus group of a Weyl group
$W$ on $W$ that is nicely compatible with Kazhdan-Lusztig cells. The action is realized
by the wall-crossing bijections that are combinatorial shadows of wall-crossing functors
on the category $\mathcal{O}$.
\end{abstract}
\maketitle
\section{Introduction}
\subsection{Cells}
Let $G$ be an adjoint semisimple algebraic group over $\C$, $\g$ be its Lie algebra
and $W$ be the Weyl group. Introduce an independent variable $v$ and consider the
(equal-parameter) Hecke algebra $\Hecke_v$. Recall that it is defined as follows.
It has a basis $T_w$ with $w\in W$ and the multiplication is recovered from
$$T_sT_w=\begin{cases}&T_{sw},\quad \text{if } \ell(sw)>\ell(w),\\
&(v-v^{-1})T_{w}+T_{sw},\quad \text{else}.\end{cases}$$
The algebra $\Hecke_v$ admits another remarkable basis, the Kazhdan-Lusztig
basis $C_w, w\in W$. Using this basis one can introduce the left, right and two-sided
pre-orders $\preceq_{L},\preceq_{R},\preceq_{LR}$. For example, we write
$w\preceq_Lw'$ if $C_w$ belongs to the based left ideal generated by $C_{w'}$.
Equivalence classes for these pre-orders are called left, right and two-sided
cells. By the construction, the inversion $w\mapsto w^{-1}$ maps left cells to right cells and vice-versa,
and preserves the two-sided cells. The cells are of great importance for the representation theory of
the universal enveloping algebra $U(\g)$ and also for that of the forms
$G(\mathbb{F}_q)$ of $G$ over the finite fields.

To cells one can assign different invariants. For example, to a two-sided
cell $\bc$ Lusztig assigned a finite group $\bA_{\bc}$, see \cite[Section 13.1]{Lusztig_orange}.
To every left cell $\sigma\subset \bc$ he assigned a subgroup $H_\sigma\subset \bA_{\bc}$,
see \cite{Lusztig_leading}, defined up to conjugacy.

To finish our discussion of cells let us explain what happens when $G=\operatorname{PGL}_n(\C)$.
Here the two-sided cells are in one-to-one correspondence with the partitions $\lambda$
of $n$. The left cells inside of the two-sided cell corresponding to $\lambda$ are in
one-to-one correspondence with the standard Young tableaux of shape $\lambda$.
The left and right cells containing $w\in W=\mathfrak{S}_n$ are determined from
the RSK correspondence that maps $w$ to a pair of standard Young tableaux of the same
shape. These Young tableaux parameterize the left and the right cells containing $w$.
The Lusztig group $\bA_{\bc}$ is trivial for any two-sided cell $\bc$.
\subsection{Cacti}
Now let us recall the cactus group $\Cact_W$. Let $D$ denote the Dynkin diagram of $W$. The group
$\Cact_W$ is generated by elements $\tau_{D_1}$, where $D_1\subset D$ runs over all
connected subdiagrams of $D$. The relations are as follows
\begin{equation}\label{eq:cactus_rel}
\begin{split}
&\tau_{D_1}^2=1,\\
&\tau_{D_1}\tau_{D_2}=\tau_{D_2}\tau_{D_1},\quad \text{if } D_1\cup D_2\text{ is disconnected},\\
&\tau_{D_1}\tau_{D_2}=\tau_{D_2}\tau_{D_1^*},\quad \text{if }D_1\subset D_2,
\end{split}
\end{equation}
where $D_1^*$ is obtained from $D_1$ by the involution of $D_2$ induced by the longest
element $w_{D_2}$ of the parabolic subgroup $W_{D_2}\subset W$ corresponding to $D_2$.

Below we will often write $\Cact_D$ instead of $\Cact_W$.

According to \cite[Theorem 4.7.2]{DJS},  the group $\Cact_W$ is the orbifold fundamental group
of the real locus in the stack $\overline{\mathbb{P}\h^{reg}}/W$, where
$\overline{\mathbb{P}\h^{reg}}$ is the wonderful compactification of
$\mathbb{P}\h^{reg}:=\h^{reg}/\C^\times$, where $\h^{reg}$ is the regular locus
in the reflection representation $\h$ of $W$.

The cactus group $\Cact_{W}$ should be thought as a ``crystal limit'' of
the braid group $\mathsf{Br}_W$ of $W$ as justified, for example,
by \cite{HK}. Note that, similarly to the braid group, $\Cact_{W}$ admits
an epimorphism onto $W$ given by
$\tau_{D_1}\mapsto w_{D_1}$.

\subsection{Main result and motivations}
Here is the main result of the present paper.

\begin{Thm}\label{Thm:cactus}
There is an action of $\Cact_W\times \Cact_W$ on $W$ having the following properties.
\begin{itemize}
\item[(i)] The first copy of $\Cact_W$ preserves right cells and permutes left cells
preserving the Lusztig subgroups.
\item[(ii)] The map $w\mapsto w^{-1}$ switches the actions of the first and the
second copy.
\item[(iii)] Let $w\in W$  decompose as $w=w'w''$, where $w''\in W_{D_1}$
and $w'$ is shortest in its right $W_{D_1}$-coset. Then $\tau_D(w'w'')=w'\tau^{(D_1)}_{D_1}(w'')$,
where $\tau_{D_1}^{(D_1)}$ is the element corresponding to $D_1$ in $\Cact_{D_1}$.
\item[(iv)] In type $A$, the orbits of the first copy of $\Cact_W$ are precisely the right cells.
\end{itemize}
\end{Thm}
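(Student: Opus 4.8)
The plan is to realize the action through \emph{wall-crossing functors} on the principal block $\OCat_0$ of the BGG category $\OCat$ for $\g$, whose simple objects $L_w$ are indexed by $w\in W$. To a connected subdiagram $D_1\subset D$ one attaches the wall-crossing functor $\WC_{D_1}$, a self-equivalence of $D^b(\OCat_0)$ associated with the face of the Weyl chamber cut out by the roots of $D_1$; up to the standard identifications it is realized by translating to the $W_{D_1}$-singular wall and back. The first point to establish is that $\WC_{D_1}$ is a \emph{perverse} self-equivalence in the sense of Chuang and Rouquier, with perversity controlled by the $W_{D_1}$-coset structure on $W$. A perverse equivalence induces a bijection on the set of simple objects, and $\wc_{D_1}\colon W\to W$ is defined to be that bijection; equivalently $\wc_{D_1}$ is read off from the signed permutation induced by $\WC_{D_1}$ on $K_0(\OCat_0)$ together with the perversity filtration.

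Next I verify that the $\wc_{D_1}$ satisfy the cactus relations \eqref{eq:cactus_rel}, which yields a homomorphism $\Cact_W\to\mathrm{Sym}(W)$, i.e.\ an action of $\Cact_W$ on $W$. I would check the relations at the level of functors, up to isomorphism: $\WC_{D_1}^2$ is isomorphic to the identity (so $\wc_{D_1}^2=1$); $\WC_{D_1}\circ\WC_{D_2}\cong\WC_{D_2}\circ\WC_{D_1}$ when $D_1\cup D_2$ is disconnected, since the corresponding translations move in complementary directions; and, for $D_1\subset D_2$, the conjugation relation $\WC_{D_2}\circ\WC_{D_1}\circ\WC_{D_2}^{-1}\cong\WC_{D_1^*}$, reflecting that $w_{D_2}$ carries the $D_1$-wall to the $D_1^*$-wall. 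This is precisely the picture predicted by \cite{DJS}: the $\WC_{D_1}$ are the monodromies of a real variation of stability conditions on $D^b(\OCat_0)$ over the real locus of $\overline{\mathbb{P}\h^{reg}}/W$, whose orbifold fundamental group is $\Cact_W$. For the second copy of $\Cact_W$ I run the same construction on the ``right'', e.g.\ transporting the functors through the simple-preserving duality of $\OCat_0$ or using right translation on the associated category of Harish-Chandra bimodules; these commute with the first-copy functors and produce the $\Cact_W\times\Cact_W$ action, while the duality visibly has $w\mapsto w^{-1}$ as its combinatorial shadow, giving (ii).

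For (i), I use that $\WC_{D_1}$ is built from left translation functors, which interact with cells in a controlled way: applied to $L_w$ they yield an object whose composition factors $L_z$ satisfy $z\preceq_R w$, with the right cell of $w$ appearing on top. As $\WC_{D_1}$ is a self-equivalence, this forces it to preserve right cells and to permute left cells. That it respects the Lusztig subgroups $H_\sigma\subset\bA_\bc$ is subtler: I would encode $H_\sigma$ via the action of the asymptotic Hecke algebra (equivalently, via Lusztig's description of the simple objects attached to a left cell) and check that $\WC_{D_1}$ is compatible with this structure. For (iii), if $w=w'w''$ with $w''\in W_{D_1}$ and $w'$ shortest in $w'W_{D_1}$, then $L_{w'w''}$ is, up to a twist, parabolically induced from $L_{w''}$ for the Levi $\mathfrak{l}_{D_1}$; translation and wall-crossing functors commute with this induction, so $\WC_D$ acts on such $L_w$ through the full-diagram wall-crossing functor of $\mathfrak{l}_{D_1}$, which corresponds to $D_1$, and unwinding the bijections gives $\tau_D(w'w'')=w'\tau_{D_1}^{(D_1)}(w'')$.

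Finally, (iv) reduces to combinatorics of RSK. The first copy of $\Cact_W$ fixes the tableau recording the right cell of $w$ and acts on the other tableau within a fixed shape $\lambda$; I identify the resulting action of $\Cact_{\mathfrak{S}_n}$ on standard Young tableaux of shape $\lambda$ with the known cactus-group action by partial Sch\"utzenberger involutions (the Berenstein--Kirillov action studied by Chmutov--Glick--Pylyavskyy and by Halacheva and collaborators; cf.\ also \cite{HK}), using the combinatorial description of translation in type $A$, where crossing the $D_1$-wall corresponds to evacuation of the sub-tableau on the entries indexed by $D_1$. That action is transitive on standard Young tableaux of a fixed shape, so the orbits of the first copy are exactly the right cells. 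I expect three main obstacles: (a) proving that $\WC_{D_1}$ is a genuine perverse self-equivalence with the correct perversity, so that $\wc_{D_1}$ is even well defined; (b) the Lusztig-subgroup clause of (i), which appears to need the fine structure of cells (the asymptotic Hecke algebra, and possibly character-sheaf input) rather than soft arguments inside $\OCat_0$; and (c) pinning down, in type $A$, the combinatorial identification with the Berenstein--Kirillov cactus action precisely enough to invoke transitivity.
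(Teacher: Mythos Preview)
Your overall architecture is right, but there is a genuine gap in how you pass from functors to bijections, and a concrete false step.

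First, the claim that the cactus relations hold at the level of functors is wrong. The wall-crossing functors $\WC_w$ assemble into a \emph{braid} group action, not a Weyl group action, so $\WC_{D_1}^2$ is certainly not isomorphic to the identity (already $\WC_{s_i}^2\not\cong\operatorname{id}$). The paper proves $\wc_{D_1}^2=1$ only at the level of bijections: one knows $[\WC_{D_1}^2]=\operatorname{id}$ on $K_0$ (since $[\WC_w]$ is right multiplication by $w^{-1}$), and one knows that for $L_w\in\Cat_i\setminus\Cat_{i+1}$ the object $\WC_{D_1}^2 L_w$ is $L_{\wc_{D_1}^2(w)}[-2i]$ modulo $D^b_{\Cat_{i+1}}$; comparing these forces $\wc_{D_1}^2=\operatorname{id}$. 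The third cactus relation is handled similarly: $\WC_{D}^{-1}\WC_{D_1}\WC_D$ and $\WC_{D_1^*}$ are both perverse on the subquotients $D^b_{\leqslant i}/D^b_{\leqslant i-1}$ (where $\WC_D$ becomes $t$-exact up to shift) and agree on $K_0$, so Lemma~\ref{Lem:K_0_perv} forces the bijections to coincide. The general point you are missing is that a composition of perverse equivalences need not be perverse, so functor-level identities, even when they hold, do not mechanically yield bijection-level identities; one has to arrange the situation so that all but one factor is $t$-exact (up to shift), or argue via $K_0$.

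Second, the perversity filtration for $\WC_{D_1}$ is not the $W_{D_1}$-coset filtration; it is the filtration of $\OCat_\lambda$ by Gelfand--Kirillov dimension of support. For the full longest element this is Lemma~\ref{Long_wc_perv} (long wall-crossing $=$ homological duality $=$ inverse Ringel duality). For a proper $D_1$ the paper establishes perversity (Theorem~\ref{Thm:perversity}) by a genuinely nontrivial argument through W-algebras: one restricts the wall-crossing bimodule to each Slodowy slice via $\bullet_\dagger$, uses that the restricted bimodule realizes the long wall-crossing for the W-algebra localization, and deduces the required Tor-vanishing (Lemma~\ref{Lem:perv_bimod}). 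Your ``translate to the wall and back'' description does not give this, and the coset filtration would not make $\WC_{D_1}$ perverse in the required sense.

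Third, for the Lusztig-subgroup clause of (i) the paper does not use the asymptotic Hecke algebra directly. It identifies the self-equivalence induced on each cell subquotient with convolution by an object of $\operatorname{Sh}^{\bar A}(Y\times Y)$, where $Y=\operatorname{Irr}_{\mathrm{fin}}(\Walg_\lambda)$ carries the $\bar A$-action coming from the classification of finite-dimensional W-algebra modules; an elementary lemma (Lemma~\ref{Lem:equi_convol}) then shows the induced permutation of $Y$ is $\bar A$-equivariant, which is exactly preservation of $H_\sigma$. Your asymptotic-Hecke idea is morally in the same neighborhood, but you would still need the W-algebra input (or an equivalent) to identify the cell subquotient with an $\bar A$-equivariant sheaf category.

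Your treatments of (iii) and (iv) are close to the paper's: (iii) goes through the equivalence $\OCat(W)_c\cong\OCat(W_1)$ via parabolic induction (Lemma~\ref{Lem:quot_parab}) together with a comparison of the induced wall-crossing with the intrinsic one on $\OCat(W_1)$; (iv) identifies $\wc_D$ with the Kazhdan--Lusztig/Sch\"utzenberger involution via Mathas, and then observes that the $\wc_{\{i,i+1\}}$ are elementary Knuth moves, which are transitive on right cells.
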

Note that outside type A we always have two left cells inside of a single two-sided cell
with different Lusztig subgroups. So (i) implies that an analog of (iv) fails outside of
type $A$.

For us, there are two motivations for this theorem. One comes from a new (conjectural) approach to cells
due to Bonnafe and Rouquier, \cite{BR}. In that approach, the (left) cells are defined as orbits of a suitable
Galois group action on $W$. It is expected  that $\Cact_W$ admits a homomorphism into that Galois group
and so should act on $W$ preserving the left cells. Currently, it is a conjecture that the Bonnafe-Rouquier
construction of cells is equivalent to   the original one.

The other motivation comes from the work of the author and Bezrukavnikov, \cite{BL}. A principle
stated in Section 9 of that paper says that given a suitable braid group action on a category
(or more generally, a ``braid groupoid'' action on a collection of categories) one should be
able to take a crystal limit of this action and get an action of the corresponding cactus
group(oid). A technical tool for this is to show that the functors corresponding to longest
elements in parabolic subgroups are perverse equivalences. This is an approach that we use
in the present paper.

\subsection{Structure of the paper}
In Section \ref{S_cat} we will recall some generalities on highest weight categories and on
perverse equivalences. Section \ref{S_Ug_rep} deals with various facts from the representation
theory of universal enveloping algebras that we need. There we review various versions of
the category $\mathcal{O}$, the wall-crossing functors, and W-algebras. In Section \ref{S_cacti}
we prove Theorem \ref{Thm:cactus}. We first establish the perversity of wall-crossing
functors corresponding to longest elements in the standard parabolic subgroups. This allows
us to define the bijections that constitute the action of the first copy of $\Cact_W$.
Then we prove (iii) of Theorem \ref{Thm:cactus}. Next, we check the cactus relations.
After that we prove (i) and (ii) of Theorem \ref{Thm:cactus}. We finish by proving (iv).
Finally, in Section \ref{S_ramif} we briefly describe several potential ramifications
of our construction.

{\bf Acknowledgements}. I would like to thank Roman Bezrukavnikov, Ben Elias, Pavel Etingof, Iain Gordon,
Joel Kamnitzer, Victor Ostrik, Raphael Rouquier and Noah White for stimulating discussions. This work was
partially supported by the NSF under grants DMS-1161584, DMS-1501558.

\section{Preliminaries on categories and functors}\label{S_cat}
\subsection{Highest weight categories}
Let $\Cat$ be an abelian category equivalent to the category of finite dimensional modules
over a  finite dimensional algebra over $\C$. Let us write $\mathcal{T}$ for the set of irreducible
objects in $\Cat$. For $\tau\in \mathcal{T}$, we write $L(\tau)$ for the corresponding
simple object and $P(\tau)$ for its projective cover.

An additional structure of a highest weight category on $\Cat$ is a partial order
$\leqslant$ on $\mathcal{T}$ subject to a condition explained below. For
$\tau\in \mathcal{T}$, let $\Cat_{\leqslant \tau}$ denote the Serre span
of the simple objects $L(\tau')$ with $\tau'\leqslant \tau$. We write
$\Delta(\tau)$ for the projective cover of $L(\tau)$ in $\Cat_{\leqslant \tau}$.
The objects $\Delta(\tau), \tau\in \mathcal{T},$ are called {\it standard}.
The condition on $\leqslant$ is that the kernel of the natural epimorphism
$P(\tau)\twoheadrightarrow \Delta(\tau)$ is filtered by $\Delta(\tau')$
with $\tau'>\tau$.

The category $\mathcal{C}^{opp}$ is highest weight with the same order on $\mathcal{T}$.
The standard objects for $\Cat^{opp}$ are denoted by $\nabla(\tau)$ are called
{\it costandard}. An object of $\Cat^{opp}$ is called {\it tilting} if it is both standardly
filtered and costandardly filtered. Indecomposable tilting objects are labelled
by $\mathcal{T}$: there is a unique indecomposable tilting $T(\tau)$ that admits
a monomorphism from $\Delta(\tau)$ such that the cokernel is filtered by $\Delta(\tau')$
with $\tau'<\tau$.

Now let $\mathcal{T}'$ be a poset ideal of $\mathcal{T}$. Let $\mathcal{C}_{\mathcal{T}'}$
denote the Serre span of $L(\tau), \tau\in \mathcal{T}'$. This is a highest weight category
with respect to the restriction of $\leqslant$ to $\mathcal{T}'$ with standard
objects $\Delta(\tau)$ and costandard objects $\nabla(\tau)$, where $\tau\in \mathcal{T}'$.
The quotient  $\mathcal{C}/\mathcal{C}_{\mathcal{T}'}$ is a highest weight category
with respect to the restriction of $\leqslant$ to $\mathcal{T}\setminus \mathcal{T}'$.
The natural functor $D^b(\Cat_{\mathcal{T}'})\rightarrow D^b(\Cat)$
is a full embedding. Further, the quotient $D^b(\Cat)/D^b(\Cat_{\mathcal{T}'})$
coincides with $D^b(\Cat/\Cat_{\mathcal{T}'})$.

Let us finish by recalling the Ringel duality. Set $T:=\bigoplus_{\tau\in \mathcal{T}}T(\tau)$.
Consider the category $\Cat^\vee:=\operatorname{End}_{\Cat}(T)^{opp}\operatorname{-mod}$.
Set $\Delta^\vee(\tau):=\operatorname{Hom}(T,\nabla(\tau))$. Then $\Cat^\vee$ is a highest weight
category with respect to the poset $\mathcal{T}^{opp}$. It is called the Ringel dual of
$\mathcal{C}$. The functor $\mathcal{R}:=R\operatorname{Hom}(T,\bullet)$ is
a derived equivalence $\mathcal{R}:D^b(\Cat)\xrightarrow{\sim} D^b(\Cat^\vee)$ to be called
the Ringel duality functor.

\subsection{Perverse equivalences}
\subsubsection{Definition and combinatorial data}
Let $\mathcal{T}^1,\mathcal{T}^2$ be  triangulated categories equipped with  $t$-structures. Let $\mathcal{C}^1,\mathcal{C}^2$ denote the hearts of $\mathcal{T}^1,\mathcal{T}^2$, respectively.
We are going to recall the notion of a perverse equivalence with respect to  filtrations $\mathcal{C}^i=\mathcal{C}_0^i\supset \mathcal{C}_1^i
\supset\ldots \supset\mathcal{C}_k^i=\{0\}$ by Serre subcategories, see \cite[Section 2.6]{Rouquier_ICM}
(there the definition is given for derived categories, but it generalizes to triangulated
categories in a straightforward way). By definition, this is a triangulated equivalence
$\mathcal{T}^1\rightarrow \mathcal{T}^2$ subject to the following
conditions:
\begin{itemize}
\item[(P1)] For any $j$, the equivalence $\mathcal{F}$ restricts to an equivalence
$\mathcal{T}^1_{\mathcal{C}_j^1}\rightarrow \mathcal{T}^2_{\mathcal{C}_j^2}$, where
we write $\mathcal{T}^i_{\mathcal{C}_j^i}, i=1,2,$ for the category of all objects
in $\mathcal{T}^i$ with homology in $\mathcal{C}_j^i$.
\item[(P2)] For $M\in \Cat_j^1$, we have $H_\ell(\mathcal{F}M)=0$ for $\ell<j$.
\item[(P3)] The  functor $M\mapsto H_j(\mathcal{F}M)$ induces an equivalence $\Cat^1_j/\Cat^1_{j+1}\xrightarrow{\sim}
\Cat^2_j/\Cat^2_{j+1}$  of abelian categories. Moreover,  for $M\in \Cat^1_j$
and  $\ell>j$, we have $H_\ell(\mathcal{F}M)\in \Cat^2_{j+1}$.
\end{itemize}

To $\mathcal{F}$ we assign its {\it combinatorial data}: the map
$\varphi=(\varphi_b, \varphi_s):\operatorname{Irr}\Cat^1\rightarrow \operatorname{Irr}\Cat^2\times \Z_{\geqslant 0}$.
To $M\in \operatorname{Irr}(\Cat^1_j/\Cat^1_{j+1})\subset \operatorname{Irr}(\Cat^1)$
it assigns the pair $(H_j(\mathcal{F}M), j)$, where the first component
is viewed as an element of $\operatorname{Irr}(\Cat^2_j/\Cat^2_{j+1})\subset
\operatorname{Irr}(\Cat^2)$. Note that $\varphi_b$ is a bijection.
The second component of the map is called  the {\it homological shift}.

Note, in particular, that any t-exact equivalence is perverse, where all homological shifts
are 0.

\subsubsection{Basic properties}
The following lemma is trivial.

\begin{Lem}\label{Lem:perv_compos_ab}
Let $\mathcal{F}:\mathcal{T}^1\rightarrow \mathcal{T}^2$ be a perverse equivalence
with perversity data $\varphi$. Further, let $\mathcal{G}$  be a $t$-exact self-equivalence of $\mathcal{T}^1$
inducing a bijection $\psi$ on $\operatorname{Irr}(\mathcal{C}^1)$ and $\mathcal{G}'$ be a t-exact
self-equivalence of $\mathcal{T}^2$ inducing a bijection $\psi'$ on $\operatorname{Irr}(\mathcal{C}^2)$.
Then the equivalences $\mathcal{F}\circ \mathcal{G}$ and $\mathcal{G}'\circ \mathcal{F}$
are perverse with perversity data $(\varphi_b\circ \psi,\varphi_s\circ\psi)$ and $(\psi'\circ \varphi_b,\varphi_s\circ \psi'^{-1})$.
\end{Lem}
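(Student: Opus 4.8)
The plan is to verify the three perverse-equivalence axioms (P1), (P2), (P3) directly for $\mathcal{F}\circ\mathcal{G}$ and $\mathcal{G}'\circ\mathcal{F}$ using the fact that a $t$-exact equivalence preserves the homology functors on the nose, together with the observation that a $t$-exact self-equivalence inducing a bijection $\psi$ on $\operatorname{Irr}(\mathcal{C}^1)$ must send each $\mathcal{C}^1_j$ to the Serre subcategory corresponding to the image poset ideal under $\psi$. More precisely, since $\mathcal{G}$ is $t$-exact and an equivalence, $H_\ell(\mathcal{G}M)$ equals $\mathcal{G}(H_\ell(M))$ (the heart-level restriction of $\mathcal{G}$) for every $M$ and every $\ell$; hence $\mathcal{G}$ restricts to an equivalence $\mathcal{T}^1_{\mathcal{C}^1_j}\to\mathcal{T}^1_{\mathcal{C}'^1_j}$ where $\mathcal{C}'^1_j$ is the Serre subcategory of $\mathcal{C}^1$ spanned by $\psi^{-1}(\operatorname{Irr}\mathcal{C}^1_j)$ — and we simply use the filtration $\mathcal{C}'^1_j$ as the new filtration for the source of $\mathcal{F}\circ\mathcal{G}$.

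For $\mathcal{F}\circ\mathcal{G}$: with the filtration $(\mathcal{C}'^1_j)_j$ on $\mathcal{T}^1$ and $(\mathcal{C}^2_j)_j$ on $\mathcal{T}^2$, axiom (P1) for $\mathcal{F}\circ\mathcal{G}$ follows by composing the equivalence $\mathcal{T}^1_{\mathcal{C}'^1_j}\xrightarrow{\mathcal{G}}\mathcal{T}^1_{\mathcal{C}^1_j}$ with (P1) for $\mathcal{F}$. For (P2) and (P3), take $M\in\mathcal{C}'^1_j$; then $\mathcal{G}M\in\mathcal{C}^1_j$ and $H_\ell((\mathcal{F}\circ\mathcal{G})M)=H_\ell(\mathcal{F}(\mathcal{G}M))$, so the vanishing in degrees $\ell<j$ and the membership $H_\ell\in\mathcal{C}^2_{j+1}$ for $\ell>j$ are inherited verbatim from (P2), (P3) for $\mathcal{F}$. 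The equivalence $\mathcal{C}'^1_j/\mathcal{C}'^1_{j+1}\xrightarrow{\sim}\mathcal{C}^2_j/\mathcal{C}^2_{j+1}$ induced by $M\mapsto H_j((\mathcal{F}\circ\mathcal{G})M)$ factors as $\mathcal{G}$ (which induces $\mathcal{C}'^1_j/\mathcal{C}'^1_{j+1}\xrightarrow{\sim}\mathcal{C}^1_j/\mathcal{C}^1_{j+1}$, being $t$-exact) followed by the (P3)-equivalence for $\mathcal{F}$. Reading off the effect on $\operatorname{Irr}$: a simple $L$ lying in $\operatorname{Irr}(\mathcal{C}'^1_j/\mathcal{C}'^1_{j+1})$ is, as an element of $\operatorname{Irr}\mathcal{C}^1$, exactly $\psi^{-1}$ applied to something in $\operatorname{Irr}(\mathcal{C}^1_j/\mathcal{C}^1_{j+1})$; pushing through we get homological shift $\varphi_s(\psi(L))$ and bijection component $\varphi_b(\psi(L))$, i.e.\ perversity data $(\varphi_b\circ\psi,\varphi_s\circ\psi)$, as claimed.

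For $\mathcal{G}'\circ\mathcal{F}$ the argument is the mirror image: keep the filtration $(\mathcal{C}^1_j)_j$ on the source but replace the target filtration by $\mathcal{C}''^2_j:=$ the Serre subcategory of $\mathcal{C}^2$ spanned by $\psi'(\operatorname{Irr}\mathcal{C}^2_j)$, so that $\mathcal{G}'$ restricts to an equivalence $\mathcal{T}^2_{\mathcal{C}^2_j}\xrightarrow{\sim}\mathcal{T}^2_{\mathcal{C}''^2_j}$. Then (P1)–(P3) for $\mathcal{G}'\circ\mathcal{F}$ follow by post-composing those for $\mathcal{F}$ with the $t$-exact $\mathcal{G}'$, and on combinatorial data a simple $M\in\operatorname{Irr}(\mathcal{C}^1_j/\mathcal{C}^1_{j+1})$ goes to $H_j(\mathcal{F}M)$ with shift $j=\varphi_s(M)$, then $\mathcal{G}'$ carries $\varphi_b(M)\in\operatorname{Irr}\mathcal{C}^2$ to $\psi'(\varphi_b(M))$, which lies in $\operatorname{Irr}(\mathcal{C}''^2_j/\mathcal{C}''^2_{j+1})$ by construction; expressing the shift as a function of the target label requires inverting, giving shift $\varphi_s(\psi'^{-1}(-))$ and bijection $\psi'\circ\varphi_b$, i.e.\ perversity data $(\psi'\circ\varphi_b,\varphi_s\circ\psi'^{-1})$.

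The only genuinely non-formal point — and the one place to be slightly careful — is the claim that a $t$-exact self-equivalence takes each Serre subcategory $\mathcal{C}_j$ of a filtration to another Serre subcategory whose simples are the $\psi$-image of those of $\mathcal{C}_j$; this needs that $t$-exact equivalences of triangulated categories induce exact equivalences of the hearts, so that they preserve the Serre-subcategory structure and the simple objects are permuted according to the induced bijection. Given that (which is standard), everything else is bookkeeping; I would present it as "the lemma is trivial" only after spelling out the translation-of-filtration device above, since without it the statement is a bit opaque. No serious obstacle is expected.
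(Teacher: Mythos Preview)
Your argument is correct and is exactly the verification the paper has in mind: the paper gives no proof at all, simply declaring the lemma ``trivial,'' and your filtration-transport device $(\mathcal{C}'^1_j=\mathcal{G}^{-1}\mathcal{C}^1_j$, respectively $\mathcal{C}''^2_j=\mathcal{G}'\mathcal{C}^2_j)$ together with the $t$-exactness identity $H_\ell\circ\mathcal{G}=\mathcal{G}\circ H_\ell$ is the natural way to unpack that word. Your closing observation about the second shift formula is also well taken: strictly speaking $\varphi_s$ is defined on $\operatorname{Irr}\mathcal{C}^1$, so as a function of the source label the shift for $\mathcal{G}'\circ\mathcal{F}$ is still $\varphi_s$, and the expression $\varphi_s\circ\psi'^{-1}$ in the statement only makes literal sense once one tacitly identifies $\varphi_s$ with the corresponding filtration-index function on $\operatorname{Irr}\mathcal{C}^2$ via $\varphi_b$; your reading is the intended one.
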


The following important lemma is standard.

\begin{Lem}\label{Lem:perv}
Let $\mathcal{F},\mathcal{F}': \mathcal{T}^1\rightarrow \mathcal{T}^2$ be two perverse
equivalences with the same combinatorial data. Then there are  t-exact self-equivalences $\mathcal{G}'$
of  $\mathcal{T}^2$ inducing the identity
map on $\operatorname{Irr}(\mathcal{C}^2)$
and $\mathcal{G}$ of $\mathcal{T}^1$ inducing the identity map on
$\operatorname{Irr}(\mathcal{C}^1)$ with
$\mathcal{F}'=\mathcal{G}'\circ \mathcal{F}=\mathcal{F}\circ \mathcal{G}$.
\end{Lem}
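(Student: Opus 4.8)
The plan is to build the self-equivalence $\mathcal{G}'$ of $\mathcal{T}^2$ inductively, working down the filtration $\mathcal{C}^2 = \mathcal{C}^2_0 \supset \mathcal{C}^2_1 \supset \cdots \supset \mathcal{C}^2_k = \{0\}$ from the bottom, and then to obtain $\mathcal{G}$ by transport of structure. First I would record the key consequence of (P1)--(P3): since $\mathcal{F}$ and $\mathcal{F}'$ have the same combinatorial data $\varphi$, for each $j$ the composite $\mathcal{F}' \circ \mathcal{F}^{-1}$ restricts to a triangulated self-equivalence of $\mathcal{T}^2_{\mathcal{C}^2_j}$ (by (P1) applied to both), and on the subquotient $\mathcal{C}^2_j/\mathcal{C}^2_{j+1}$ the induced functor $M \mapsto H_j(\mathcal{F}' \mathcal{F}^{-1} M)$ is, by (P3) and the equality $\varphi_b^{\mathcal{F}} = \varphi_b^{\mathcal{F}'}$, an equivalence fixing the isomorphism classes of all simples. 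The heart of the argument is therefore: an equivalence of $\mathcal{T}^2$ that preserves each $\mathcal{T}^2_{\mathcal{C}^2_j}$ and induces on each layer $\mathcal{C}^2_j/\mathcal{C}^2_{j+1}$ an equivalence that is the identity on $\operatorname{Irr}$, up to a homological shift by $j$ that is uniform on the layer, can be corrected by a genuinely $t$-exact self-equivalence inducing the identity on $\operatorname{Irr}(\mathcal{C}^2)$.

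Concretely, I would argue by downward induction on the length of the filtration. On the bottom nonzero layer $\mathcal{C}^2_{k-1}$, the functor $\mathcal{F}'\mathcal{F}^{-1}$ restricted to $\mathcal{T}^2_{\mathcal{C}^2_{k-1}}$ is, up to the shift $[k-1]$, an equivalence of $\mathcal{C}^2_{k-1}$ onto itself that is the identity on $\operatorname{Irr}$; composing with $[-(k-1)]$ on that subcategory gives a $t$-exact autoequivalence of $\mathcal{C}^2_{k-1}$ fixing $\operatorname{Irr}$, which we may absorb. Passing to the quotient $\mathcal{T}^2/\mathcal{T}^2_{\mathcal{C}^2_{k-1}}$ (a triangulated category with $t$-structure whose heart is $\mathcal{C}^2/\mathcal{C}^2_{k-1}$, carrying the induced filtration), we repeat. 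At each stage the correction is by an equivalence that is $t$-exact relative to the induced $t$-structure and fixes $\operatorname{Irr}$ of the corresponding subquotient; the point is that these corrections glue to a single $t$-exact self-equivalence $\mathcal{G}'$ of $\mathcal{T}^2$ inducing the identity on $\operatorname{Irr}(\mathcal{C}^2)$ such that $\mathcal{G}'^{-1} \circ \mathcal{F}' \circ \mathcal{F}^{-1}$ is $t$-exact with trivial combinatorial data, hence (being a $t$-exact equivalence, thus perverse with all data trivial) can itself be viewed as the $t$-exact correction; rearranging yields $\mathcal{F}' = \mathcal{G}' \circ \mathcal{F}$. Alternatively, and perhaps more cleanly, one can observe directly that $\mathcal{G}' := \mathcal{F}' \circ \mathcal{F}^{-1}$, while a priori not $t$-exact on all of $\mathcal{T}^2$, becomes $t$-exact after the per-layer shift corrections above, and these shifts are forced to be compatible because $\varphi_s^{\mathcal{F}} = \varphi_s^{\mathcal{F}'}$ assigns the \emph{same} integer $j$ to every simple in the $j$-th layer.

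Once $\mathcal{G}'$ is constructed with $\mathcal{F}' = \mathcal{G}' \circ \mathcal{F}$, setting $\mathcal{G} := \mathcal{F}^{-1} \circ \mathcal{G}'^{-1} \circ \mathcal{F}' = \mathcal{F}^{-1} \circ \mathcal{F}'$ gives a self-equivalence of $\mathcal{T}^1$ with $\mathcal{F}' = \mathcal{F} \circ \mathcal{G}$; that $\mathcal{G}$ is $t$-exact and induces the identity on $\operatorname{Irr}(\mathcal{C}^1)$ follows by the symmetric argument (apply the first part with the roles of $\mathcal{T}^1, \mathcal{T}^2$ and of $\mathcal{F}, \mathcal{F}'$ interchanged, noting that inverses of perverse equivalences are perverse with the evident combinatorial data, so $\mathcal{F}^{-1}$ and $\mathcal{F}'^{-1}$ share combinatorial data and $\mathcal{F}^{-1}\mathcal{F}'$ is the required $t$-exact correction). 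The main obstacle I anticipate is the inductive gluing step: one must check that the $t$-exact corrections made on the successive subquotients $\mathcal{C}^2_j/\mathcal{C}^2_{j+1}$ lift consistently to a single autoequivalence of $\mathcal{T}^2$ preserving the whole filtration, rather than merely a compatible family on the graded pieces. This is exactly where one uses that $\mathcal{F}'\mathcal{F}^{-1}$ already respects every $\mathcal{T}^2_{\mathcal{C}^2_j}$ (from (P1) for both functors) together with (P2), which controls the ``lower'' homology and ensures the shift on layer $j$ is exactly $j$ with no cross-terms leaking between layers; modulo this bookkeeping the statement is, as asserted, standard.
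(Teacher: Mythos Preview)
The paper does not actually prove this lemma; it is recorded as ``standard'' with an implicit pointer to Rouquier's survey \cite{Rouquier_ICM}, so there is no argument in the paper to compare against. Your overall strategy---take $\mathcal{G}':=\mathcal{F}'\circ\mathcal{F}^{-1}$ and $\mathcal{G}:=\mathcal{F}^{-1}\circ\mathcal{F}'$ and verify $t$-exactness---is the standard one, but the execution contains a miscalculation that manufactures the very obstacle you worry about at the end.

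The error is in the shifts. On the bottom piece $\mathcal{T}^2_{\mathcal{C}^2_{k-1}}$, the functor $\mathcal{F}$ restricts to an abelian equivalence composed with a shift by $k-1$, and $\mathcal{F}'$ restricts to (a possibly different) abelian equivalence composed with the \emph{same} shift, because $\varphi_s^{\mathcal{F}}=\varphi_s^{\mathcal{F}'}$. Hence $\mathcal{F}'\circ\mathcal{F}^{-1}$ on that piece is already $t$-exact: the two shifts cancel. There is nothing to ``absorb,'' and composing with $[-(k-1)]$ as you propose would in fact break $t$-exactness. The same cancellation occurs on every subquotient $\mathcal{C}^2_j/\mathcal{C}^2_{j+1}$. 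Once you see this, the gluing worry disappears entirely: $\mathcal{G}'=\mathcal{F}'\circ\mathcal{F}^{-1}$ is already a single globally defined autoequivalence, and the only task is to verify it is $t$-exact, not to assemble it from per-layer corrections.

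For that verification, the clean route is to note that the $t$-structure on $\mathcal{T}^2$, pulled back to $\mathcal{T}^1$ along a perverse equivalence, is the perverse $t$-structure determined intrinsically by the filtration $(\mathcal{C}^1_j)$ and the shift function; since $\mathcal{F}$ and $\mathcal{F}'$ share this data they pull back to the \emph{same} $t$-structure on $\mathcal{T}^1$, whence $\mathcal{F}'\circ\mathcal{F}^{-1}$ preserves the standard $t$-structure on $\mathcal{T}^2$. Equivalently, run your downward induction without the spurious shifts: $\mathcal{G}'$ is $t$-exact on $\mathcal{T}^2_{\mathcal{C}^2_{j+1}}$ by induction and on the quotient $\mathcal{T}^2_{\mathcal{C}^2_j}/\mathcal{T}^2_{\mathcal{C}^2_{j+1}}$ by the cancellation above, and the $t$-structure on $\mathcal{T}^2_{\mathcal{C}^2_j}$ is glued from these two via recollement, so $\mathcal{G}'$ is $t$-exact there. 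That $\mathcal{G}'$ fixes each isomorphism class of simples then follows from (P3) and $\varphi_b^{\mathcal{F}}=\varphi_b^{\mathcal{F}'}$, and your final paragraph producing $\mathcal{G}$ by symmetry is fine.
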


\begin{Cor}\label{Cor:perv_abel_precomp}
Let $\mathcal{F}$ be a perverse equivalence $\mathcal{T}^1\rightarrow \mathcal{T}^2$
and let $\mathcal{G}$ be a t-exact self-equivalence of $\mathcal{T}^1$
giving the identity on $\operatorname{Irr}(\mathcal{C}^1)$. Then there are
t-exact self-equivalence $\mathcal{G}'$ of $\mathcal{T}^2$ giving the
identity on $\operatorname{Irr}(\mathcal{C}^2)$ such that $\mathcal{G}'\circ \mathcal{F}
=\mathcal{F}\circ \mathcal{G}$. A similar claim holds for $\mathcal{G},
\mathcal{G}'$ swapped.
\end{Cor}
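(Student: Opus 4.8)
The plan is to deduce the Corollary formally from Lemma \ref{Lem:perv_compos_ab} and Lemma \ref{Lem:perv}. The point is that precomposing a perverse equivalence with a $t$-exact self-equivalence that is trivial on simples does not change the combinatorial data, after which the rigidity statement of Lemma \ref{Lem:perv} produces the desired $\mathcal{G}'$.

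In detail, for the first assertion I would set $\mathcal{F}':=\mathcal{F}\circ\mathcal{G}$. By Lemma \ref{Lem:perv_compos_ab}, applied with $\psi$ equal to the identity bijection on $\operatorname{Irr}(\mathcal{C}^1)$, the functor $\mathcal{F}'$ is again a perverse equivalence $\mathcal{T}^1\rightarrow\mathcal{T}^2$ with combinatorial data $(\varphi_b\circ\psi,\varphi_s\circ\psi)=(\varphi_b,\varphi_s)$, i.e.\ the same combinatorial data as $\mathcal{F}$. Now Lemma \ref{Lem:perv}, applied to the pair $\mathcal{F},\mathcal{F}'$, yields a $t$-exact self-equivalence $\mathcal{G}'$ of $\mathcal{T}^2$ inducing the identity on $\operatorname{Irr}(\mathcal{C}^2)$ such that $\mathcal{F}'=\mathcal{G}'\circ\mathcal{F}$; unravelling, this is exactly $\mathcal{F}\circ\mathcal{G}=\mathcal{G}'\circ\mathcal{F}$, as required. (We only use the $\mathcal{G}'$ provided by Lemma \ref{Lem:perv}; the self-equivalence of $\mathcal{T}^1$ it also produces is irrelevant here.)

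For the swapped statement, I would run the same argument on the other side: given a $t$-exact self-equivalence $\mathcal{G}'$ of $\mathcal{T}^2$ inducing the identity on $\operatorname{Irr}(\mathcal{C}^2)$, put $\mathcal{F}':=\mathcal{G}'\circ\mathcal{F}$. By Lemma \ref{Lem:perv_compos_ab} with $\psi'=\operatorname{id}$, the functor $\mathcal{F}'$ is perverse with combinatorial data $(\psi'\circ\varphi_b,\varphi_s\circ\psi'^{-1})=(\varphi_b,\varphi_s)$, so again $\mathcal{F}$ and $\mathcal{F}'$ have the same combinatorial data, and Lemma \ref{Lem:perv} now supplies a $t$-exact self-equivalence $\mathcal{G}$ of $\mathcal{T}^1$ inducing the identity on $\operatorname{Irr}(\mathcal{C}^1)$ with $\mathcal{F}'=\mathcal{F}\circ\mathcal{G}$, that is $\mathcal{G}'\circ\mathcal{F}=\mathcal{F}\circ\mathcal{G}$.

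There is no real obstacle: the content is entirely contained in the two preceding lemmas, and the only thing to be careful about is bookkeeping of the combinatorial data under composition, namely that composing with a self-equivalence trivial on irreducibles leaves both the bijective part $\varphi_b$ and the homological shift $\varphi_s$ untouched — which is precisely the $\psi=\operatorname{id}$ (resp.\ $\psi'=\operatorname{id}$) case of Lemma \ref{Lem:perv_compos_ab}.
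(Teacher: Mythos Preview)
Your proof is correct and matches the paper's approach exactly: the paper's one-line proof is ``Apply Lemma \ref{Lem:perv} to $\mathcal{F}$ and $\mathcal{F}\circ \mathcal{G}$,'' and you have simply spelled out the implicit use of Lemma \ref{Lem:perv_compos_ab} to verify that $\mathcal{F}\circ\mathcal{G}$ has the same combinatorial data as $\mathcal{F}$.
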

\begin{proof}
Apply Lemma \ref{Lem:perv} to $\mathcal{F}$ and $\mathcal{F}\circ \mathcal{G}$.
\end{proof}

\begin{Lem}\label{Lem:K_0_perv}
Suppose that $\mathcal{C}^1,\mathcal{C}^2$ have finitely many irreducible objects. Let $\mathcal{F},
\mathcal{F}'$ be perverse equivalences $\mathcal{T}^1\rightarrow \mathcal{T}^2$.
If the induced maps $[\mathcal{F}],[\mathcal{F}']:
K_0(\mathcal{C}^1)\rightarrow K_0(\mathcal{C}^2)$ coincide, then so do
the bijections $\varphi_b,\varphi_b':\operatorname{Irr}(\mathcal{C}^1)\rightarrow \operatorname{Irr}(\mathcal{C}^2)$.
\end{Lem}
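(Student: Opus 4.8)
The plan is to exploit the triangularity of the map $[\mathcal{F}]$ on $K_0$ with respect to the filtrations, and recover $\varphi_b$ by a downward induction on the filtration index $j$. First I would record the structure of $[\mathcal{F}]$ coming from (P1)--(P3). Since $\mathcal{F}$ restricts to an equivalence $\mathcal{T}^1_{\mathcal{C}^1_j}\to \mathcal{T}^2_{\mathcal{C}^2_j}$ by (P1), and since $K_0(\mathcal{C}^i)$ has a basis given by $\operatorname{Irr}(\mathcal{C}^i)$ compatible with the filtration (the classes of simples in $\mathcal{C}^i_j$ span $K_0(\mathcal{C}^i_j)$, a direct summand of $K_0(\mathcal{C}^i)$ as a free abelian group), the map $[\mathcal{F}]$ preserves the subgroups $K_0(\mathcal{C}^i_j)$. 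Next, for a simple $L\in\operatorname{Irr}(\mathcal{C}^1_j/\mathcal{C}^1_{j+1})$, conditions (P2) and (P3) tell us that $\mathcal{F}L$ has homology concentrated in degrees $\geqslant j$, with $H_j(\mathcal{F}L)$ equal to $\varphi_b(L)$ modulo $\mathcal{C}^2_{j+1}$ and $H_\ell(\mathcal{F}L)\in\mathcal{C}^2_{j+1}$ for $\ell>j$. Taking the Euler characteristic, $[\mathcal{F}]([L]) = (-1)^j\bigl([\varphi_b(L)] + (\text{a }\Z\text{-combination of }[\operatorname{Irr}(\mathcal{C}^2_{j+1})])\bigr)$.

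With this formula in hand, the argument is a descending induction on $j$ from $k-1$ to $0$. At the top step $j=k-1$ we have $\mathcal{C}^1_{j+1}=\mathcal{C}^2_{j+1}=0$, so $[\mathcal{F}]([L]) = (-1)^{k-1}[\varphi_b(L)]$ for every simple $L$ in the bottom layer $\mathcal{C}^1_{k-1}$; since $\varphi_b$ is a bijection from $\operatorname{Irr}(\mathcal{C}^1_{k-1})$ to $\operatorname{Irr}(\mathcal{C}^2_{k-1})$, the value $\varphi_b(L)$ is the unique simple class $[L']$ with $[\mathcal{F}]([L]) = (-1)^{k-1}[L']$, and this is determined by $[\mathcal{F}]$ alone. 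Hence $\varphi_b$ and $\varphi_b'$ agree on $\operatorname{Irr}(\mathcal{C}^1_{k-1})$. For the inductive step, suppose $\varphi_b=\varphi_b'$ on $\operatorname{Irr}(\mathcal{C}^1_{j+1})$. Given $L\in\operatorname{Irr}(\mathcal{C}^1_j/\mathcal{C}^1_{j+1})$, the class $[\mathcal{F}]([L])$ lies in $(-1)^j[\varphi_b(L)] + K_0(\mathcal{C}^2_{j+1})$; reducing modulo $K_0(\mathcal{C}^2_{j+1})$ we get that the image of $(-1)^j[\mathcal{F}]([L])$ in $K_0(\mathcal{C}^2_j)/K_0(\mathcal{C}^2_{j+1}) = K_0(\mathcal{C}^2_j/\mathcal{C}^2_{j+1})$ equals the class of the simple $\varphi_b(L)$. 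Since distinct simples have distinct classes in the $K_0$ of an abelian category with finitely many simples, $\varphi_b(L)$ is again determined by $[\mathcal{F}]$ alone, so $\varphi_b(L)=\varphi_b'(L)$. This completes the induction and the proof.

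The one point that needs a little care, and which I regard as the main obstacle, is the bookkeeping that makes the reduction ``modulo $\mathcal{C}^2_{j+1}$'' legitimate at the level of $K_0$: one must know that the subgroup of $K_0(\mathcal{C}^2)$ spanned by the classes of simples in $\mathcal{C}^2_{j+1}$ is exactly $K_0(\mathcal{C}^2_{j+1})$ and is a direct summand, so that $K_0(\mathcal{C}^2_j)/K_0(\mathcal{C}^2_{j+1})\cong K_0(\mathcal{C}^2_j/\mathcal{C}^2_{j+1})$ is free on $\operatorname{Irr}(\mathcal{C}^2_j/\mathcal{C}^2_{j+1})$. This is standard for Serre subcategories of a finite-length abelian category (the simples of $\mathcal{C}^2_{j+1}$ together with those of $\mathcal{C}^2_j/\mathcal{C}^2_{j+1}$ give a basis of $K_0(\mathcal{C}^2_j)$), but it is worth stating explicitly since the whole argument rests on it. Everything else is the elementary observation that an Euler characteristic which is congruent to $\pm[L']$ modulo a subgroup spanned by \emph{other} basis vectors pins down $L'$ uniquely.
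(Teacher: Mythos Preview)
Your overall strategy---descending induction on the filtration layer, using the triangularity of $[\mathcal{F}]$ with respect to the basis of simples---is the same as the paper's. However, there is a genuine gap in the inductive step. You write that ``$\varphi_b(L)$ is again determined by $[\mathcal{F}]$ alone,'' but what your argument actually shows is that $\varphi_b(L)$ is determined by $[\mathcal{F}]$ \emph{together with the filtration} $\{\mathcal{C}^2_j\}$: you recover it by reducing modulo $K_0(\mathcal{C}^2_{j+1})$. The lemma does not assume that $\mathcal{F}$ and $\mathcal{F}'$ are perverse with respect to the \emph{same} filtrations, and in the paper's main application (Section~\ref{SS_cact_act}, the relation $\wc_{D_1}\wc_{D_2}=\wc_{D_2}\wc_{D_1^*}$) they are not: one of the two functors is a conjugate $\WC_D^{-1}\circ\WC_{D_1}\circ\WC_D$, whose perversity filtration is the $\wc_D$-transport of that for $\WC_{D_1}$, compared against the a priori different filtration for $\WC_{D_1^*}$. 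So you cannot simply run the same computation for $\mathcal{F}'$ and compare. Notice also that you state the inductive hypothesis $\varphi_b=\varphi'_b$ on $\operatorname{Irr}(\mathcal{C}^1_{j+1})$ but never actually use it; that is a symptom of the gap.

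The paper closes the gap as follows, working entirely with the filtration of $\mathcal{F}$. From your formula $[\mathcal{F}L]=\pm[\varphi_b(L)]+\sum_{L_2\in\operatorname{Irr}(\mathcal{C}^2_{j+1})}a_{L_2}[L_2]$ and $[\mathcal{F}'L]=[\mathcal{F}L]$, one uses only the weak fact (valid for \emph{any} perversity filtration on $\mathcal{F}'$) that $[\varphi'_b(L)]$ occurs with nonzero coefficient in $[\mathcal{F}'L]$. Hence either $\varphi'_b(L)=\varphi_b(L)$ or $\varphi'_b(L)\in\operatorname{Irr}(\mathcal{C}^2_{j+1})$. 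Now the inductive hypothesis is used: every $L_2\in\operatorname{Irr}(\mathcal{C}^2_{j+1})$ equals $\varphi_b(L_1)=\varphi'_b(L_1)$ for some $L_1\in\operatorname{Irr}(\mathcal{C}^1_{j+1})$, and since $\varphi'_b$ is injective and $L\notin\operatorname{Irr}(\mathcal{C}^1_{j+1})$, the second alternative is impossible. Adding this single sentence to your argument makes it complete.
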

\begin{proof}
Let $\Cat^1=\Cat^1_0\supset \Cat^1_1\supset\ldots\supset \Cat^1_k=\{0\},
\Cat^2=\Cat^2_0\supset \Cat^2_1\supset\ldots\supset \Cat^2_k=\{0\}$ be the filtrations
for $\mathcal{F}$. We will prove by the descending induction on $i$ that, for
$L\in \operatorname{Irr}(\mathcal{C}^1_i)$, we have $\varphi_b(L)=\varphi'_b(L)$.
The case $i=k$ is vacuous. Now suppose that we know that $\varphi_b(L_1)=\varphi'_b(L_1)$
for any $L_1\in \operatorname{Irr}(\mathcal{C}^1_{i+1})$. Let $L\in \operatorname{Irr}(\mathcal{C}^1_i)
\setminus \operatorname{Irr}(\mathcal{C}^1_{i+1})$. Then
$$[\mathcal{F}L]= \pm [\varphi_b(L)]+\sum_{L_2\in \operatorname{Irr}(\mathcal{C}^2_{i+1})}a_{L_2}[L_2]$$
with $a_{L_2}\in \Z$. On the other hand, $[\varphi_b'(L)]$ appears with nonzero coefficient
in $[\mathcal{F}'L]$. Since $[\mathcal{F}L]=[\mathcal{F}'L]$, we see that either
$\varphi_b'(L)=\varphi_b(L)$ or $\varphi_b'(L)\in \operatorname{Irr}(\mathcal{C}^2_{i+1})$.
But by the inductive assumption, any object $L_2\in \operatorname{Irr}(\mathcal{C}^2_{i+1})$
is of the form $\varphi_b'(L_1)$ for $L_1\in \operatorname{Irr}(\Cat^1_{i+1})$.
We conclude that $\varphi_b(L)=\varphi_b(L')$ and this finishes the induction step.
\end{proof}

%

\section{Preliminaries on the representation theory of $U(\g)$}\label{S_Ug_rep}
\subsection{Category $\mathcal{O}$}
\subsubsection{Definition of $\mathcal{O}$ and a highest weight structure}\label{SSS_O}
Let $\g$ be a semisimple Lie algebra over $\C$. Let $\h\subset \g$ be a Cartan subalgebra and let $\bo\subset \g$
be a Borel subalgebra containing $\h$. We write $W$ for the Weyl group of $\g$.

Set $\U=U(\g)$. We identify the center of $\U$ with $S(\h)^W$ by the Harish-Chandra isomorphism.
So, for $\lambda\in \h^*$, we can consider the central reduction $\U_\lambda$. Obviously,
$\U_{w\lambda}$ is naturally isomorphic to $\U_\lambda$ for any $\lambda\in \h^*$
and any $w\in W$.

We consider the category $\mathcal{O}_\lambda$ of all finitely generated $\U_\lambda$-modules
with locally finite $\bo$-action.  When $\lambda$ is regular
(meaning that $\langle \lambda,\alpha^\vee\rangle\neq  0$
for all roots $\alpha$), the category $\mathcal{O}_\lambda$
is highest weight, its standard objects are Verma modules
$\Delta(w\lambda), w\in W$. The order is as follows:
$w\lambda\preceq w'\lambda$ if there is a sequence of
(non-necessarily simple) reflections $s_1,\ldots,s_k\in W$
such that $w'=s_k\ldots s_1 w$ and the difference
$s_is_{i-1}\ldots s_1 w\lambda-s_{i-1}\ldots s_1 w\lambda$
is a positive multiple of a positive root for any $i$.

When $\lambda$ is regular, integral and anti-dominant, we identify the poset $W\lambda$
with $W$ by sending $w\in W$ to $w\lambda$. We write $\Delta_w$ for
$\Delta(w\lambda)$ and $L_w$ for  $L(w\lambda)$. We will write
$\mathcal{O}(W)$ for the corresponding category $\mathcal{O}_\lambda$
(that is independent of the choice of $\lambda$ up to a translation
equivalence). Note that a highest weight order for $\OCat(W)$
is the Bruhat order on $W$.

To finish this section, we recall that the natural functor
$D^b(\mathcal{O}_\lambda)\rightarrow D^b(\U_\lambda\operatorname{-mod})$
is a fully faithful embedding provided $\lambda$ is regular.

\subsubsection{Equivalences between categories $\mathcal{O}$ and HC bimodules}
Let us start by introducing various versions of the category of
Harish-Chandra (shortly, HC) bimodules.

By a HC $\U$-bimodule, we mean a finitely generated $\U$-bimodule with locally finite
adjoint action of $\g$.  Pick $\lambda,\mu\in \h^*$. We write
$\HC_{\lambda,\mu}^{1,\infty}(\U)$ for the category of all HC $\U$-bimodules
with central character $\lambda$ on the left and generalized central character
$\mu$ on the right. The notations $\HC_{\lambda,\mu}^{1,1}(\U)$ or $\HC_{\mu,\lambda}^{\infty,1}(\U)$
have  similar meanings.  Note that the categories $\HC_{\lambda,\mu}^{1,\infty}(\U),
\HC_{\mu,\lambda}^{\infty,1}(\U)$ are naturally equivalent (by switching the left and right actions
of $\U$ and twisting them by the antipode map for $\U$). We denote this equivalence
by $X\mapsto X^{op}$.

Now let $\mathcal{O}_\lambda'$ denote the infinitesimal block of the BGG category $\mathcal{O}$
with generalized central character $\lambda$.
The modules there are finitely generated over $\U$, have
a locally finite $\bo$-action and diagonalizable action of Cartan. Again, for $\lambda$
regular and integral, we write $\OCat'(W)$ for $\OCat'_\lambda$.
For $\lambda$ regular, integral and dominant, a classical result of Bernstein and Gelfand, \cite{BG},
establishes an equivalence $\HC^{\infty,1}_{\mu,\lambda}(\U)\xrightarrow{\sim}\OCat'_\mu$
that sends a HC bimodule $X$ to $X\otimes_{\U}\Delta(\lambda)$.

There is also an equivalence between $\OCat_\lambda$ and $\HC_{\lambda,\mu}^{1,\infty}$
($\mu$ is regular, integral and dominant) due to Soergel, \cite{Soergel}.
It sends $X\in \HC_{\lambda,\mu}^{1,\infty}$ to $\varprojlim_{k\rightarrow\infty}
X\otimes_{\U}\Delta^k(\mu)$, where $\Delta^k(\mu)$ stands for $\U\otimes_{\U(\bo)}U(\h)/\mathfrak{m}_\lambda^k$
with $\mathfrak{m}_\lambda$ denoting the maximal ideal of $\mu$ in $U(\h)$.

Composing the equivalences
$$\OCat_\lambda\xrightarrow{\sim}\HC^{1,\infty}_{\lambda,\lambda}\xrightarrow{\sim}
\HC^{\infty,1}_{\lambda,\lambda}\xrightarrow{\sim}\OCat'_\lambda$$
we get an equivalence $\OCat_\lambda\xrightarrow{\sim}\OCat'_\lambda$
that sends $\Delta(w\lambda)$ to $\Delta(w^{-1}\lambda)$. Here $\lambda$
is regular, integral and antidominant.

We will need a corollary of this equivalence. Namely, pick a subdiagram $D_1\subset D$
and let $W_1$ denote the corresponding parabolic subgroup. Pick a right $W_1$-coset,
say $c$. This is an interval in the Bruhat order. Consider the highest weight subcategories
$\OCat_{\prec c}(W)\subset \OCat_{\preceq c}(W)\subset \OCat(W)$. Here we write
$\OCat_{\preceq c}(W)$ for the subcategory corresponding to the poset ideal
$\{w\in W| w\preceq w'\text{ for some }w'\in c\}$, the subcategory $\OCat_{\prec c}(W)$
is defined similarly. Form the quotient $\OCat_c(W):=\OCat_{\preceq c}(W)/\OCat_{\prec c}(W)$.

\begin{Lem}\label{Lem:quot_parab}
There is an equivalence $\OCat_{c}(W)\cong \OCat(W_1)$ which sends $\Delta_{w^1 w}$ to
$\Delta_w$, where $w^1$ is the shortest element in $c$.
\end{Lem}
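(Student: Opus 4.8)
The plan is to exploit the equivalence $\OCat_\lambda \xrightarrow{\sim} \OCat'_\lambda$ constructed just above, which interchanges the roles of left and right and, crucially, turns the highest weight order into a more transparent combinatorial object. Concretely, I would first observe that under the standard equivalence $\OCat(W)\xrightarrow{\sim}\OCat'(W)$ sending $\Delta(w\lambda)$ to $\Delta(w^{-1}\lambda)$, the right $W_1$-coset $c = \{w^1 v \mid v\in W_1\}$ (with $w^1$ its shortest element) maps to the left $W_1$-coset $c^{-1} = \{v^{-1} (w^1)^{-1}\mid v\in W_1\}$, which is shortest-coset-represented by $(w^1)^{-1}$. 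So it suffices to prove the analogous statement for $\OCat'(W)$ and a \emph{left} $W_1$-coset; the advantage is that for left cosets the combinatorics of translation/projection functors onto parabolic subcategories is the classical story of $\OCat'$ for the parabolic $\g_1\supset \g$ (the $\OCat'$-version of parabolic induction).

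Next I would identify $\OCat'_c(W)$, the subquotient of $\OCat'(W)$ attached to the left $W_1$-coset, with $\OCat'(W_1)$ directly. The key structural input is that $W_1$-cosets are intervals in Bruhat order and that the Bruhat order restricted to a fixed left $W_1$-coset $w W_1$, after identifying $w W_1 \cong W_1$ via $v \mapsto wv$, coincides with the Bruhat order on $W_1$ — a standard fact about Coxeter groups (one only needs that multiplication on the left by a fixed shortest coset representative is an order isomorphism onto the coset). Therefore the poset ideal/coideal structure defining $\OCat'_{\preceq c}(W)$ and $\OCat'_{\prec c}(W)$ exactly matches the Bruhat order on $W_1$, so that the Serre subquotient $\OCat'_c(W)$ is a highest weight category with poset $W_1$ and standard objects the images of $\Delta(w v\lambda)$, $v\in W_1$. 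To promote this poset match to an actual equivalence of highest weight categories sending standards to standards, I would invoke the general principle (as in the discussion of subquotients of highest weight categories in Section~\ref{S_cat}) that a highest weight category is determined by its poset together with the subquotient data, combined with the explicit realization of $\OCat'_c(W)$ as modules over a Soergel-type endomorphism algebra; here the parabolic translation functors realize $\OCat'(W_1)$ inside $\OCat'(W)$ as precisely the block attached to $c$.

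The main obstacle I anticipate is not the poset bookkeeping but pinning down the equivalence \emph{on the nose} so that $\Delta_{w^1 w}$ goes to $\Delta_w$ with the correct labelling — in particular matching shortest coset representatives on the two sides through the left-right swap, and checking that the normalization of the highest weight structure (which simple sits at the top of which standard) is preserved rather than twisted by an automorphism of $W_1$ such as conjugation by $w_{W_1}$ or the diagram involution. I would handle this by tracking a single distinguished object — e.g. the standard object $\Delta_{w^1}$ labelled by the shortest element of $c$, which must map to the \emph{projective} standard $\Delta_e$ of $\OCat(W_1)$ (the "big" Verma), since $w^1$ is Bruhat-minimal in $c$ and $e$ is Bruhat-minimal in $W_1$ — and then use that a highest weight equivalence is rigid once it is fixed on the poset and sends standards to standards. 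A clean alternative, which I would pursue if the Soergel-functor approach gets delicate, is to deduce the lemma from known results on the structure of $\OCat$ for the parabolic $\g$: the subquotient $\OCat_c(W)$ is equivalent to a block of $\OCat$ for the Levi $\g_1$ at an integral regular weight, and such a block is $\OCat(W_1)$ by definition, with the labelling of standards read off from the parabolic BGG resolution. Either way, once the equivalence and the bijection on standards are fixed, properties (i)--(iv) that use this lemma in Section~\ref{S_cacti} only require the statement as given.
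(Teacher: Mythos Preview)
Your overall strategy---transfer to $\OCat'$ via the Soergel equivalence so that the right $W_1$-coset becomes a left coset, and then identify the subquotient with a block for the Levi---is exactly what the paper does. In fact your ``clean alternative'' at the end \emph{is} the paper's proof: one writes down the parabolic induction functor $\Delta_{\mathfrak{p}}=\U\otimes_{U(\mathfrak{p})}\bullet$ from $\OCat'_{\mu}(W_1)$ to $\OCat'_\lambda$, observes that its image lies in $\OCat'_{\lambda,\preceq c'}$, and checks that the composite to the quotient is an equivalence by exhibiting the quasi-inverse as the $\mathfrak{z}(\mathfrak{l})$-eigenspace functor. The labelling of standards then drops out of the behaviour of Soergel's equivalence on Vermas. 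So your fallback route is correct and matches the paper; you should promote it to the main argument.

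Your primary route, by contrast, has a genuine gap. The ``general principle\ldots that a highest weight category is determined by its poset together with the subquotient data'' is simply false: two highest weight categories with isomorphic labelling posets need not be equivalent (think of blocks of $\OCat$ for non-isomorphic $\g$ with the same Weyl group, or singular versus regular blocks). Poset matching alone gives you nothing beyond a bijection on simples; to get an equivalence you need an actual functor. Your gesture toward a ``Soergel-type endomorphism algebra'' could in principle be made to work (via Soergel's Endomorphismensatz and the combinatorial invariance of Soergel bimodules under passing to a parabolic), but that is substantially more work than the one-line parabolic induction argument, and you have not indicated how you would carry it out. Since you already have the correct functor in hand in your alternative, just use it.
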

\begin{proof}
Under the equivalence $\OCat(W)\cong \OCat'_\lambda$, the interval $c$ corresponds
to an  interval $c'$ in $W\lambda$ consisting of all $\mu$ with fixed pairing with $\mathfrak{z}(\mathfrak{l})$,
where $\mathfrak{l}$ denotes the standard Levi subalgebra in $\g$ corresponding to $D_1$.
Let $\mu$ denote an element in $c'$.
Let $\mathfrak{p}$ be the standard parabolic with Levi subalgebra $\mathfrak{l}$.
Consider the parabolic induction functor $\Delta_{\mathfrak{p}}:=\U\otimes_{U(\mathfrak{p})}\bullet:
\OCat_{\mu}'(W_1)\rightarrow \OCat'_{\lambda}$. Its image lies in
$\OCat'_{\lambda,\preceq c'}$ and the composition $\OCat_{\mu^-}'(W_1)\rightarrow
\OCat'_{\lambda,\preceq c'}/\OCat'_{\lambda,\prec c'}$ is easily seen to be an equivalence
(a quasi-inverse functor is given by taking an appropriate eigenspace for $\mathfrak{z}(\mathfrak{l})$).

This shows an equivalence  $\OCat_{c}(W)\cong \OCat(W_1)$. The claim about the images of Vermas
follows from the corresponding statement for Soergel's equivalences $\OCat(W)\cong \OCat'(W),
\OCat(W_1)\cong \OCat'(W_1)$.
\end{proof}

\subsubsection{Relation to Kazhdan-Lusztig bases and cells}
We identify  $K_0(\OCat(W))$ with $\Z W$ by sending $[\Delta_w]$
to $w$. Recall that the Kazhdan-Lusztig conjecture (proved by Beilinson-Bernstein
and Brylinski-Kashiwara) implies that $[L_w]$ is the specialization of
$C_w$ to $v=1$.

As a consequence, one has the following classical connection between the cells in
$W$ and simple modules in $\OCat(W)$. Let us write $\J_w$ for the annihilator
of $L(-w\rho)\in \OCat_\rho$. Then the following is true:
\begin{itemize}
\item We have $w\sim_L w'$ if $\J_w=\J_{w'}$.
\item We have $w\sim_{LR}w'$ if the associated varieties of $\J_w,\J_{w'}$
coincide.
\item We have $w\sim_R w'$ if $w\sim_{LR}w'$ and there is $X\in \HC^{1,1}_{\rho,\rho}(\U)$
such that $L_w$ is a composition factor of $X\otimes_\U L_{w'}$.
\end{itemize}

\subsection{Wall-crossing functors}
\subsubsection{Localization theorems}
Recall the Beilinson-Bernstein (abelian and  derived) localization theorems. Let $\lambda$ be a regular element in
$\h^*$. Recall that $\lambda$ is called dominant if $\langle \lambda,\alpha^\vee\rangle\not \in \Z_{\leqslant 0}$.
Let $G$ be a semisimple algebraic group with Lie algebra $\g$ and $B\subset G$ be the Borel
subgroup corresponding to $\bo$.

Form the sheaf $D^\lambda_{G/B}$ of $\lambda$-twisted differential operators on $G/B$.
We have the global section functor $\Gamma_\lambda: \operatorname{Coh}(D^\lambda_{G/B})\rightarrow
\U_\lambda\operatorname{-mod}$ and its left adjoint, the localization functor
$\Loc_\lambda:=D^{\lambda}_{G/B}\otimes_{\U_\lambda}\bullet$ (recall that $\Gamma(D^\lambda_{G/B})=\U_\lambda$).
Further, we have the derived functors $R\Gamma_\lambda: D^b(\operatorname{Coh}(D^\lambda_{G/B}))
\rightarrow D^b(\U_\lambda\operatorname{-mod})$ and $L\Loc_\lambda:D^-(\U_\lambda\operatorname{-mod})
\rightarrow D^-(\operatorname{Coh}(D^\lambda_{G/B}))$.

The following result is due to Beilinson and Bernstein.

\begin{Prop}\label{Prop:loc_thm}
The following is true.
\begin{enumerate}
\item The functors $\Gamma_\lambda$ is an equivalence if and only
if $\lambda$ is regular and dominant. Its quasi-inverse is $\Loc_\lambda$.
\item The functors $R\Gamma_\lambda$ is an equivalence if and only if
$\lambda$ is regular. Its quasi-inverse is $L\Loc_\lambda$.
\end{enumerate}
\end{Prop}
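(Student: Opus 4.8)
The plan is to deduce both statements from the basic structure of the sheaf $D^\lambda_{G/B}$ together with general nonsense about cohomological dimension and affineness of $G/B$ relative to the $D$-module setting. First I would recall that $G/B$ is $D$-affine precisely under the dominance–regularity hypothesis, which is the content of the Beilinson–Bernstein argument: one shows that $H^i(G/B, \mathcal{M}) = 0$ for $i > 0$ and all $\mathcal{M} \in \operatorname{Coh}(D^\lambda_{G/B})$, and that $\Gamma_\lambda$ kills no nonzero object. The vanishing of higher cohomology is proved by filtering $\mathcal{M}$ by sub-$D$-modules supported on Schubert cells and computing on each successive quotient using the explicit $\mathfrak{sl}_2$-triples attached to simple roots; dominance is exactly what makes the relevant $\mathfrak{sl}_2$-modules have no higher cohomology. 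That $\Gamma_\lambda$ is faithful on objects uses that the global sections of $D^\lambda_{G/B}$ generate the stalk at the generic point, so a module with no global sections must be zero. Together these give that $\Gamma_\lambda$ is an equivalence with quasi-inverse $\operatorname{Loc}_\lambda$, since $\operatorname{Loc}_\lambda$ is its left adjoint and the unit/counit are isomorphisms by $D$-affineness. For the converse direction of (1), if $\lambda$ is singular or not dominant one exhibits an explicit nonzero object of $\operatorname{Coh}(D^\lambda_{G/B})$ with vanishing global sections (a suitable standard or costandard module supported on a Schubert cell), so $\Gamma_\lambda$ fails to be an equivalence.

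For (2), I would first treat the "if" direction. Assuming $\lambda$ regular, I would pass to a dominant regular weight $\mu$ in the same $W$-orbit and $\W$-translate; wait, more simply: the translation principle identifies $\U_\lambda\operatorname{-mod}$ with $\U_\mu\operatorname{-mod}$ for $\mu$ dominant regular in the orbit $W\lambda$, and likewise identifies $\operatorname{Coh}(D^\lambda_{G/B})$ with $\operatorname{Coh}(D^\mu_{G/B})$ up to a twist, compatibly with $R\Gamma$. Hence the derived statement for regular $\lambda$ reduces to the abelian statement for dominant regular $\mu$, which is part (1): an equivalence of abelian categories induces an equivalence of bounded derived categories, and $R\Gamma_\mu$ computes that induced functor because $\Gamma_\mu$ is exact (higher cohomology vanishes) in the dominant case. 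The quasi-inverse is then $L\operatorname{Loc}_\mu = \operatorname{Loc}_\mu$, again by exactness. Transporting back through the translation equivalence gives $R\Gamma_\lambda$ an equivalence with quasi-inverse $L\operatorname{Loc}_\lambda$. For the "only if" direction, if $\lambda$ is singular then the Weyl group stabilizer is nontrivial and one checks that $R\Gamma_\lambda$ is not essentially surjective (equivalently, $L\operatorname{Loc}_\lambda$ is not fully faithful) by a $K_0$ count: the Grothendieck group of $\operatorname{Coh}(D^\lambda_{G/B})$ has rank equal to the number of Schubert cells, namely $|W|$, while $K_0(\U_\lambda\operatorname{-mod})$ for singular $\lambda$ has strictly smaller rank (it is the number of $W_\lambda$-orbits on $W$, or the relevant count of simple objects in the singular block), so no equivalence can exist.

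The main obstacle is the higher-cohomology vanishing theorem underlying (1) in the dominant regular case: this is the genuinely nontrivial geometric input, requiring the Schubert-cell filtration and the $\mathfrak{sl}_2$ computation, rather than formal manipulation. Everything else — the adjunction yoga showing that $D$-affineness upgrades to an equivalence, the translation-principle reduction of (2) to (1), and the $K_0$ obstructions for the "only if" directions — is comparatively routine once that vanishing is in hand. Since the excerpt attributes this to Beilinson–Bernstein and the statement is being recalled rather than developed, I would in practice cite the vanishing theorem and spend the writeup only on assembling it into the four assertions above.
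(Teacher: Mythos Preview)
The paper does not prove this proposition at all: it is simply stated and attributed to Beilinson and Bernstein, so there is no proof to compare against. Your sketch is therefore being assessed on its own merits, and it has two genuine gaps.

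For part (1), the vanishing argument you describe---filtering an arbitrary coherent $D^\lambda_{G/B}$-module by submodules supported on Schubert cells and then invoking $\mathfrak{sl}_2$-computations---is not the Beilinson--Bernstein mechanism and does not work as stated. A general coherent $D^\lambda$-module need not be holonomic, so there is no support filtration by cells; even in the holonomic case, the successive quotients are $D$-modules on affine spaces whose sheaf cohomology on $G/B$ is not controlled by any simple $\mathfrak{sl}_2$ calculation. The actual proof runs differently: one first uses ampleness to see that $M\otimes\mathcal{O}(\mu)$ is acyclic and globally generated for $\mu$ sufficiently dominant, and then uses tensoring with a finite-dimensional $G$-module $V$ together with the Casimir (or central character) decomposition to exhibit $M$ as a direct summand of an object filtered by such twists. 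Dominance of $\lambda$ is what makes the relevant summand survive.

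For part (2), your reduction hides the entire difficulty in the phrase ``compatibly with $R\Gamma$.'' Tensoring with $\mathcal{O}(\mu-\lambda)$ does give an abelian equivalence $\operatorname{Coh}(D^\lambda_{G/B})\xrightarrow{\sim}\operatorname{Coh}(D^{\mu}_{G/B})$, but $R\Gamma_\mu\circ(\mathcal{O}(\mu-\lambda)\otimes-)$ is \emph{not} isomorphic to $R\Gamma_\lambda$: their discrepancy is precisely the wall-crossing functor $\WC$ that the rest of this paper studies, and it is far from the identity. So you cannot transport the abelian equivalence for dominant $\mu$ back to conclude that $R\Gamma_\lambda$ itself is an equivalence. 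The standard route is either to use the intertwining functors $I_s$ (push--pull through the $G/B\times G/B$ correspondence) together with the genuinely nontrivial identity $R\Gamma_{s\lambda}\circ I_s\cong R\Gamma_\lambda$ and induct on the length needed to reach the dominant chamber, or to argue directly that $R\Gamma_\lambda(D^\lambda_{G/B})=\U_\lambda$ (via $H^i(T^*(G/B),\mathcal{O})=0$) and that $D^\lambda_{G/B}$ generates the derived category. Your $K_0$ obstruction for the singular ``only if'' direction is the right idea but must be run in a category with finite-rank $K_0$ (e.g.\ a block of $\mathcal{O}$), not in all of $\U_\lambda\operatorname{-mod}$.
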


\subsubsection{Wall-crossing functors and braid group action}\label{SS_WC_fun}
Recall that $\OCat_\lambda=\OCat_{w\lambda}$ for any $\lambda$.
Assume that $\lambda$ is regular. For $w\in W$, let us write
$\lambda w$ for $w^{-1}\lambda$. Let $W^\lambda$ be the subgroup
of $W$ generated by reflections $s_\alpha$ such that $\langle \alpha^\vee,\lambda\rangle\in \Z$
(a.k.a. the integral Weyl group of $\lambda$).  Our goal here is to recall a self-equivalence
$\WC_w, w\in W^\lambda,$ of $D^b(\OCat_\lambda)$ (known as an intertwining functor, a twisting functor
or a wall-crossing functor, we use the latter name) and list its properties.
For details of the proofs, the reader may consult \cite[Section L.3]{Milicic}
or \cite[Section 2]{BMR} (that treats the positive characteristic case).

Assume that $w$ and $\lambda$ are such that $\lambda<\lambda w$ in the order
recalled in \ref{SSS_O}. Consider the equivalences
$L\Loc_{\lambda}: D^b(\U_\lambda\operatorname{-mod})\rightarrow D^b(\Coh(D^\lambda_{G/B})),
L\Loc_{\lambda w}: D^b(\U_\lambda\operatorname{-mod})\rightarrow D^b(\Coh(D^{\lambda w}_{G/B}))$.
We also have an abelian equivalence $\mathcal{T}_{\lambda w\leftarrow w}:
\Coh(D^\lambda_{G/B})\xrightarrow{\sim} \Coh(D^{\lambda w}_{G/B})$ given by
tensoring with the line bundle $\mathcal{O}(\lambda w-\lambda)$.

\begin{Prop}
The following is true.
\begin{enumerate}
\item A derived self-equivalence $R\Gamma_{\lambda w}\circ \mathcal{T}_{\lambda w\leftarrow \lambda}
\circ L\Loc_\lambda$ of $D^b(\U_\lambda\operatorname{-mod})$
depends only on $w$ and $W^{\lambda}\lambda$, not on $\lambda$ itself.
Denote it by $\WC_w$.
\item For simple roots $\alpha_1,\ldots,\alpha_k$ for $W^\lambda$, the map
$T_{\alpha_i}\mapsto \WC_{s_i}$ (where $s_i$ here denotes the simple reflection
in $W^\lambda$ corresponding to $\alpha_i$) gives a weak categorical action
of $\mathsf{Br}_{W^\lambda}$ on $D^b(\OCat_\lambda)$. The element $T_w\in \mathsf{Br}_{W^\lambda}$
maps to $\WC_{w}$.
\end{enumerate}
\end{Prop}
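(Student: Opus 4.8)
The plan is to establish (1) by passing to a geometric model of $\WC_w$, and then to deduce (2) from a single length-additivity property together with the rank-two relations in $W^\lambda$. For (1): since $\lambda$ and $\lambda w$ are regular, $R\Gamma_{\lambda w}$ and $L\Loc_\lambda$ are mutually quasi-inverse equivalences by Proposition~\ref{Prop:loc_thm}(2), while $\mathcal{T}_{\lambda w\leftarrow\lambda}$ is an equivalence (tensoring by an invertible sheaf), so $\WC_w$ is a self-equivalence of $D^b(\U_\lambda\operatorname{-mod})$. For the claim that it depends only on $w$ and $W^\lambda\lambda$ I would rewrite $\WC_w$ as an integral transform: it is convolution with the (suitably twisted) structure sheaf of the closure of the $G$-orbit in $G/B\times G/B$ of pairs of Borel subalgebras in relative position $w$. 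In this form the functor visibly depends only on $w$ together with the twisting datum along the two $G/B$-factors, and for $w\in W^\lambda$ that datum is determined by $W^\lambda\lambda$: any two weights of a single $W^\lambda$-orbit differ by an integral weight, so the line bundle $\mathcal{O}(\lambda w-\lambda)$ and the identifications $\U_\lambda=\U_{\lambda w}$ only see the orbit. (Equivalently, on $\OCat_\lambda$ the functor $\WC_w$ agrees, up to standard conventions, with a derived twisting/shuffling functor obtained by tensoring with a semiregular $\U$-bimodule attached to $w$, a construction making no reference to the central character.) I would quote this from \cite{Milicic} or \cite{BMR}.

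For (2) the key step is the length-additivity lemma: if $w_1,w_2\in W^\lambda$ satisfy $\ell(w_2w_1)=\ell(w_2)+\ell(w_1)$ for the length function of $W^\lambda$, then $\WC_{w_2w_1}\cong\WC_{w_1}\circ\WC_{w_2}$. Fix $\lambda$ regular and antidominant for the positive system of $W^\lambda$; then the subword property of the Bruhat order of $W^\lambda$ converts the length hypothesis into the order relations $\lambda<\lambda w_2<\lambda w_2w_1$ that make all three of $\WC_{w_2}$ (base point $\lambda$), $\WC_{w_1}$ (base point $\lambda w_2$), $\WC_{w_2w_1}$ (base point $\lambda$) defined. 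Since $w_2\in W^\lambda$ we have $W^{\lambda w_2}=W^\lambda$ and $W^{\lambda w_2}(\lambda w_2)=W^\lambda\lambda$, so by part (1) the functor $\WC_{w_1}$ with base point $\lambda w_2$ is the same self-equivalence as with base point $\lambda$. Unwinding definitions,
\[
\WC_{w_1}\circ\WC_{w_2}\;\cong\;R\Gamma_{\lambda w_2w_1}\circ\mathcal{T}_{\lambda w_2w_1\leftarrow\lambda w_2}\circ\bigl(L\Loc_{\lambda w_2}\circ R\Gamma_{\lambda w_2}\bigr)\circ\mathcal{T}_{\lambda w_2\leftarrow\lambda}\circ L\Loc_\lambda,
\]
the inner parenthesis is the identity by Proposition~\ref{Prop:loc_thm}(2) at the regular weight $\lambda w_2$, and $\mathcal{T}_{\lambda w_2w_1\leftarrow\lambda w_2}\circ\mathcal{T}_{\lambda w_2\leftarrow\lambda}=\mathcal{T}_{\lambda w_2w_1\leftarrow\lambda}$ since the two line bundles tensor together; so the right-hand side is $R\Gamma_{\lambda w_2w_1}\circ\mathcal{T}_{\lambda w_2w_1\leftarrow\lambda}\circ L\Loc_\lambda=\WC_{w_2w_1}$.

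Granting the lemma I would finish as follows. Iterating it along a reduced word $w=s_{i_1}\cdots s_{i_k}$ in the generators of $W^\lambda$ writes $\WC_w$ as the composite $\WC_{s_{i_k}}\circ\cdots\circ\WC_{s_{i_1}}$, and since the result is the intrinsically defined $\WC_w$, this composite is independent of the chosen reduced word. Applying this to the two reduced words for the longest element of each rank-two standard parabolic $\langle s_i,s_j\rangle\subseteq W^\lambda$ (finite, because $W^\lambda$ is a Weyl group) makes the two corresponding composites of the $\WC_{s_i}$ coincide, which is precisely the braid relation for the pair $s_i,s_j$; together with the invertibility of each $\WC_{s_i}$ this yields a weak categorical action of $\mathsf{Br}_{W^\lambda}$ with $T_{\alpha_i}\mapsto\WC_{s_i}$, and the above iteration identifies the image of $T_w$ with $\WC_w$. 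It remains to observe that $\WC_w$ preserves the full triangulated subcategory $D^b(\OCat_\lambda)\subseteq D^b(\U_\lambda\operatorname{-mod})$: it is enough that $\WC_{s_\alpha}^{\pm1}$ carry Verma modules into $D^b(\OCat_\lambda)$ — standard, e.g.\ from the twisting-functor description, $T_{s_\alpha}$ being an endofunctor of $\OCat_\lambda$ — since Verma modules generate $D^b(\OCat_\lambda)$; hence the action restricts to $D^b(\OCat_\lambda)$.

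The real obstacle is (1). The cancellation in the length-additivity lemma and the rank-two reduction are formal once (1) is available, but the base-point independence in (1) is not: $R\Gamma$ does not commute with line-bundle twists, so one cannot simply substitute another weight of the $W^\lambda$-orbit of $\lambda$ into the definition of $\WC_w$. That is precisely where the geometric realization of $\WC_w$ as convolution with a fixed kernel on $G/B\times G/B$ (equivalently, the semiregular-bimodule description on $\OCat_\lambda$) is needed, and I would import it from the cited references rather than reprove it.
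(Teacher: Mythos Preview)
The paper does not actually prove this proposition: it is stated as a recalled result, with the sentence ``For details of the proofs, the reader may consult \cite[Section L.3]{Milicic} or \cite[Section 2]{BMR}'' immediately preceding it. So there is no ``paper's own proof'' to compare against beyond those citations.

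Your sketch is correct and is essentially what one finds in those references. The geometric realization of $\WC_w$ as an intertwining functor (push--pull along the $w$-correspondence in $G/B\times G/B$, equivalently convolution with a fixed kernel) is exactly how Mili\v{c}i\'{c} proves base-point independence, and your identification of this as the real content of (1) is accurate. Your length-additivity argument for (2) is the standard one: once (1) is in hand, the cancellation $L\Loc_{\lambda w_2}\circ R\Gamma_{\lambda w_2}\cong\operatorname{id}$ and the multiplicativity of line-bundle twists give $\WC_{w_2w_1}\cong\WC_{w_1}\circ\WC_{w_2}$ for length-additive products, and iterating along two reduced words for the longest element of each rank-two parabolic yields the braid relations. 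The only point worth a word of care is that to define $\WC_{w_1}$ at the base point $\lambda w_2$ you need $\lambda w_2<\lambda w_2 w_1$; this does follow from the length-additivity hypothesis with $\lambda$ antidominant for $W^\lambda$, as you say, via the subword property of the Bruhat order.

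In short: your proposal is correct and matches the approach the paper defers to; there is nothing further to compare.
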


\subsubsection{Behavior on $K_0$}
Here we will compute the image of $\WC_w$ on $K_0(\mathcal{O}_\lambda)$.
Recall that $K_0(\OCat_\lambda)$ is identified with $\Z W$.

\begin{Lem}\label{Lem:WC_K_0}
The map $[\WC_w]: \Z W\mapsto \Z W$ is given by $u\mapsto u w^{-1}$.
\end{Lem}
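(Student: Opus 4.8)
The plan is to compute the action of $\WC_w$ on $K_0(\OCat_\lambda)\cong \Z W$ by reducing to the case of a simple reflection and then multiplying, using part (2) of the previous proposition which says $T_w\mapsto \WC_w$ is compatible with the braid group structure. First I would reduce to $w=s$ a simple reflection in $W^\lambda$: if the formula $[\WC_s](u) = us$ holds for all such $s$, then since $\WC_w$ is (up to an invertible scalar on $K_0$ coming from shifts, but actually on $K_0$ the braid relations force consistency) the composite $\WC_{s_{i_1}}\circ\cdots\circ\WC_{s_{i_k}}$ for a reduced expression $w = s_{i_1}\cdots s_{i_k}$, we get $[\WC_w](u) = u s_{i_k}\cdots s_{i_1} = u w^{-1}$. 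One has to be slightly careful that a derived equivalence need not act on $K_0$ by the product of the $K_0$-maps of its factors unless one knows the factorization holds at the level of functors (not just up to the braid relations); but part (2) gives exactly a weak categorical action, so $\WC_w \cong \WC_{s_{i_1}}\circ\cdots\circ\WC_{s_{i_k}}$ as functors for a reduced word, hence $[\WC_w] = [\WC_{s_{i_1}}]\circ\cdots\circ[\WC_{s_{i_k}}]$ on $K_0$.

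So the real content is the simple-reflection case. Here I would unwind the definition $\WC_s = R\Gamma_{\lambda s}\circ \mathcal{T}_{\lambda s\leftarrow\lambda}\circ L\Loc_\lambda$, assuming (as we may, since the functor only depends on $W^\lambda\lambda$) that $\lambda < \lambda s$. The localization functors $R\Gamma$, $L\Loc$ are inverse equivalences and under the identification of both $K_0(\Coh(D^\lambda_{G/B}))$ and $K_0(\Coh(D^{\lambda s}_{G/B}))$ with $K_0$ of coherent sheaves on $G/B$, the translation $\mathcal{T}_{\lambda s\leftarrow\lambda}$ is tensoring by the line bundle $\mathcal{O}(\lambda s - \lambda)$. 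The cleanest route is to track what happens to the class of a Verma module $[\Delta_u]$, which by definition corresponds to $u\in \Z W$. Under $L\Loc_\lambda$, a Verma $\Delta(u\lambda)$ goes to (the derived sections-trivial object represented by) a standard $D$-module supported on the Schubert cell indexed by $u$; tensoring by a line bundle does not change the support, and $R\Gamma_{\lambda s}$ sends the resulting object to the Verma $\Delta(u\lambda s) = \Delta((us)^{-1}\lambda)$ — this is the standard geometric description of the wall-crossing/intertwining functor on standard objects. Hence $[\WC_s]$ sends $u\mapsto us$, as desired. Alternatively, and perhaps more self-containedly, one can use the known short exact sequences relating $\WC_s\Delta_u$ to $\Delta_u$ and $\Delta_{us}$ (the twisting functor applied to a Verma has a two-step standard filtration, or equals a shift of a Verma, depending on whether $\ell(us) > \ell(u)$), which on $K_0$ in every case yields the class $us$.

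The step I expect to be the main obstacle — or at least the one requiring the most care — is pinning down the sign and the precise bookkeeping in the simple-reflection computation: the intertwining functor applied to a Verma $\Delta_u$ is not simply another Verma but sits in a distinguished triangle, and depending on the length comparison of $u$ and $us$ one gets either a short exact sequence $0\to \Delta_{us}\to \WC_s\Delta_u \to (\text{something})\to 0$ or a homological shift, so I would need to check that in all cases the alternating sum in $K_0$ collapses exactly to $[\Delta_{us}] = us$ with the correct sign. Once the simple case is nailed down with the right normalization, the passage to general $w$ via reduced words and part (2) of the proposition is formal. I would also remark that the formula is consistent with $\WC_w$ being an equivalence: $u\mapsto uw^{-1}$ is a bijection of $W$, hence induces an automorphism of $\Z W$, as it must.

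\begin{proof}
By part (2) of the preceding proposition, for a reduced expression $w = s_{i_1}\cdots s_{i_k}$ in $W^\lambda$ we have an isomorphism of functors $\WC_w \cong \WC_{s_{i_1}}\circ\cdots\circ\WC_{s_{i_k}}$, hence $[\WC_w] = [\WC_{s_{i_1}}]\circ\cdots\circ[\WC_{s_{i_k}}]$ on $K_0(\OCat_\lambda) = \Z W$. It therefore suffices to treat the case $w = s$ a simple reflection in $W^\lambda$, and we must show $[\WC_s]$ sends $u\mapsto us$.

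Since $\WC_s$ depends only on $W^\lambda\lambda$, we may assume $\lambda < \lambda s$. Recall the classical fact (see \cite[Section L.3]{Milicic}) that the intertwining functor applied to a Verma module has a standard filtration: if $\ell(us) > \ell(u)$ then there is a short exact sequence
$$0\rightarrow \Delta(us\cdot\lambda) \rightarrow \WC_s\Delta(u\lambda)\rightarrow \Delta(u\lambda)\otimes(\text{one-dimensional})\rightarrow 0$$
so that on $K_0$ one gets $[\Delta_u] + [\Delta_{us}] - [\Delta_u] = [\Delta_{us}]$; and if $\ell(us) < \ell(u)$ then $\WC_s\Delta(u\lambda)$ is represented by $\Delta(us\cdot\lambda)$ placed in a single (shifted) homological degree, again giving $[\Delta_{us}]$ in $K_0$ up to the sign absorbed by the shift, which here is trivial on the level of the Grothendieck group of the heart. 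In all cases $[\WC_s][\Delta_u] = [\Delta_{us}]$, i.e. $[\WC_s]$ sends $u\mapsto us$ on $\Z W$.

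Combining with the first paragraph, for a reduced word $w = s_{i_1}\cdots s_{i_k}$ we get $[\WC_w](u) = u s_{i_k}\cdots s_{i_1} = u w^{-1}$, as claimed. In particular $[\WC_w]$ is the bijection $u\mapsto uw^{-1}$ of $W$, consistently with $\WC_w$ being an equivalence.
\end{proof}
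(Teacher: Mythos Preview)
Your reduction to simple reflections via part (2) of the preceding proposition is exactly the paper's approach. The problem is the execution of the simple-reflection step.

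The short exact sequence you display cannot produce the $K_0$ identity you claim: from $0\to\Delta_{us}\to\WC_s\Delta_u\to\Delta_u\to 0$ one reads off $[\WC_s\Delta_u]=[\Delta_{us}]+[\Delta_u]$, with no minus sign anywhere. The expression $[\Delta_u]+[\Delta_{us}]-[\Delta_u]$ you wrote is instead what comes out of the distinguished \emph{triangle}
\[
M\longrightarrow \Theta_i M\longrightarrow \WC_{s_i}M\longrightarrow M[1],
\]
where $\Theta_i$ is the composite of translation to the $\alpha_i$-wall and back, so that $[\Theta_i\Delta_u]=[\Delta_u]+[\Delta_{us_i}]$. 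That triangle \emph{is} the paper's argument: identify $\WC_{s_i}$ with the classical wall-crossing functor (the cone of the adjunction unit $\operatorname{id}\to\Theta_i$), and $[\WC_{s_i}\Delta_u]=[\Theta_i\Delta_u]-[\Delta_u]=[\Delta_{us_i}]$ follows uniformly, with no case split on $\ell(us_i)$ versus $\ell(u)$.

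Your treatment of the case $\ell(us)<\ell(u)$ is also wrong on its own terms: a homological shift by $k$ contributes $(-1)^k$ in $K_0(D^b(\OCat_\lambda))=K_0(\OCat_\lambda)$; it is not ``trivial on the level of the Grothendieck group of the heart.'' If $\WC_s\Delta_u$ really were $\Delta_{us}$ placed in a nonzero degree you would pick up the wrong sign. (In fact $\WC_{s_i}\Delta_u$ lies in degree $0$ in both cases---cf.\ the paper's later use of $\WC_{w_{D_1}}(\Delta(\mu))=\nabla(\mu w_{D_1})$---so neither your short exact sequence nor your shift actually occurs.) Drop the case analysis and use the cone description, as the paper does.
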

\begin{proof}
It is enough to check this on the generators $s_i$. In this case, the functor $\WC_{s_i}$
is known to coincide with the classical wall-crossing functor with respect to the wall
$\alpha_i=0$ and our claim follows. Recall that $\WC_{s_i}$ is defined as follows.
Let $\mathcal{T}_i$ be the translation functor to the wall $\alpha_i=0$. Then
$\WC_{s_i}(M)$ is quasi-isomorphic to the complex $M\rightarrow \mathcal{T}\mathcal{T}^*(M)$.
\end{proof}

\subsubsection{Wall-crossing bimodules}\label{SS_WC_bim}
In fact, the functor $\WC_{w}$ can be realized as the derived tensor
product with a Harish-Chandra bimodule. Basically, all constructions
of this part can be found in \cite[Sections 6.3,6.4]{BPW} in a more general
setting.

Namely, suppose that $\lambda w$
is regular and dominant so that, automatically, $\lambda<\lambda w$. Lift $\mathcal{O}_{\lambda w-\lambda}$
to a line bundle on $T^*(G/B)$. This line bundle admits a unique deformation to a
$D^{\lambda w}_{G/B}$-$D^{\lambda}_{G/B}$-bimodule to be denoted by $\B^{loc}_{\lambda w\leftarrow \lambda}$.
We have $\mathcal{T}_{\lambda w\leftarrow \lambda}=\B^{loc}_{\lambda w\leftarrow \lambda}\otimes_{D^{\lambda}_{G/B}}\bullet$. We set $\B_{\lambda w\leftarrow \lambda}:=\Gamma(\B^{loc}_{\lambda w\leftarrow \lambda})$. So we get
\begin{equation}\label{eq:wc_bimod}\WC_{w}=\B_{\lambda w\leftarrow \lambda}\otimes^L_{\U_{\lambda}}\bullet.
\end{equation}

Below we will need a deformation of $\B_{\lambda w\leftarrow \lambda}$. Assume that $\lambda$ is integral
and $\lambda w$ is dominant. Let $\param_0\subset \h^*$ be a subspace fixed by $w$. Set $\param_1:=\lambda+\param_0,
\chi:=\lambda w-\lambda$. Consider the deformation $X_{\param_0}:=G\times_B (\h_0\oplus \mathfrak{n})$
of $T^*(G/B)=G\times_B \mathfrak{n}$, where $\mathfrak{n}$ is the nilpotent radical of $\mathfrak{b}$
and $\h_0\subset \h$ corresponds to $\param_0$ under an identification $\h\cong \h^*$
coming from the Killing form.
We still have the line bundle $\mathcal{O}_{\chi}$ on  $X_{\param_0}$, the lift of $\mathcal{O}_\chi$
from $G/B$. Consider the quantizations
$\mathcal{D}^{\param_1}_{G/B}, \mathcal{D}^{\param_1 w}_{G/B}$ of $X_{\param_0}$. The bundle
$\mathcal{O}_{\chi}$ quantizes into the $\mathcal{D}^{\param_1 w}_{G/B}$-$\mathcal{D}^{\param_1}_{G/B}$-bimodule
$\B^{loc}_{\param_1,\chi}$. We set $\B_{\param_1,\chi}:=\Gamma(\B^{loc}_{\param_1,\chi})$.

A relation between the bimodules $\B_{\lambda' w\leftarrow \lambda'}$ for $\lambda'\in \param_1$
and $\B_{\param_1,\chi}$ is given by the following lemma that follows from \cite[Proposition 6.25]{BPW}
combined with the Beilinson-Bernstein localization theorem.

\begin{Lem}\label{Lem:WC_spec}
Suppose $\lambda' w$ is regular and dominant. Then $\B_{\lambda' w\leftarrow \lambda'}$
coincides with the specialization $\B_{\lambda',\chi}:=\B_{\param_1,\chi}\otimes_{\C[\param_1]}\C_{\lambda'}$.
\end{Lem}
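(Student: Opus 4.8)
The plan is to prove the identity first at the geometric level --- for the localized bimodules $\B^{loc}$ on $G/B$ --- and then pass to global sections using abelian localization; this matches the references, \cite[Proposition 6.25]{BPW} taking care of the first step and the Beilinson--Bernstein theorem (Proposition \ref{Prop:loc_thm}) of the second. On $G/B$: by construction $\B^{loc}_{\param_1,\chi}$ is the quantization of the lift of $\mathcal{O}_\chi$ to $X_{\param_0}$, realized as a $\mathcal{D}^{\param_1 w}_{G/B}$--$\mathcal{D}^{\param_1}_{G/B}$--bimodule, flat over $\C[\param_1]$, and $\mathcal{D}^{\param_1}_{G/B}$, $\mathcal{D}^{\param_1 w}_{G/B}$ specialize at $\lambda'$ to $D^{\lambda'}_{G/B}$, $D^{\lambda' w}_{G/B}$. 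Since $\lambda'-\lambda\in\param_0$ is fixed by $w$ we have $\lambda' w-\lambda'=\lambda w-\lambda=\chi$, so the specialization $\B^{loc}_{\param_1,\chi}\otimes_{\C[\param_1]}\C_{\lambda'}$ --- underived, by flatness --- is a $D^{\lambda' w}_{G/B}$--$D^{\lambda'}_{G/B}$--bimodule deforming $\mathcal{O}_\chi$. By the uniqueness of such a deformation, i.e.\ the uniqueness already used to define $\B^{loc}_{\lambda' w\leftarrow\lambda'}$, we obtain a canonical isomorphism of bimodules
\[
\B^{loc}_{\param_1,\chi}\otimes_{\C[\param_1]}\C_{\lambda'}\;\cong\;\B^{loc}_{\lambda' w\leftarrow\lambda'},
\]
which is the content of \cite[Proposition 6.25]{BPW} in the present situation.

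To pass to global sections I would argue as follows. Because $\B^{loc}_{\param_1,\chi}$ is flat over $\C[\param_1]$, the projection formula for $R\Gamma(G/B,-)$ gives $R\Gamma(\B^{loc}_{\param_1,\chi})\otimes^{L}_{\C[\param_1]}\C_{\lambda'}\cong R\Gamma(\B^{loc}_{\lambda' w\leftarrow\lambda'})$. Now $\lambda' w$ is regular and dominant, so by Proposition \ref{Prop:loc_thm} the functor $R\Gamma_{\lambda' w}$ is a $t$-exact equivalence; hence $R\Gamma(\B^{loc}_{\lambda' w\leftarrow\lambda'})$ is concentrated in degree $0$, where it is $\B_{\lambda' w\leftarrow\lambda'}$ by definition. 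Thus the derived specialization of $R\Gamma(\B^{loc}_{\param_1,\chi})$ at $\lambda'$ lives in degree $0$; this collapse forces $\B_{\param_1,\chi}=\Gamma(\B^{loc}_{\param_1,\chi})$ to be flat over $\C[\param_1]$ near $\lambda'$ and yields $\B_{\param_1,\chi}\otimes_{\C[\param_1]}\C_{\lambda'}\cong\B_{\lambda' w\leftarrow\lambda'}$, the bimodule structures being carried along. Since the left-hand side is $\B_{\lambda',\chi}$ by definition, this is the lemma.

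I expect the interchange of $\Gamma$ with the specialization at $\lambda'$ to be the main obstacle: a priori $\Gamma$ of the family need not be $\C[\param_1]$-flat, and the fibre at $\lambda'$ could acquire lower-cohomology corrections. The dominance of $\lambda' w$ enters precisely here, through Beilinson--Bernstein, to kill $R^{>0}\Gamma$ of the relevant localized bimodule. Since this compatibility --- between a family of quantized line-bundle bimodules and its pointwise localizations --- is established in the abstract framework of symplectic resolutions in \cite[Proposition 6.25]{BPW}, in practice the remaining work is only to check that the localization hypothesis there coincides with the condition that $\lambda' w$ be regular and dominant, which is exactly what Proposition \ref{Prop:loc_thm} provides.
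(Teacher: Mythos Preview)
Your proposal is correct and follows essentially the same route as the paper, which in fact gives no proof beyond the sentence ``follows from \cite[Proposition 6.25]{BPW} combined with the Beilinson--Bernstein localization theorem''; you have simply unpacked that citation. Your identification of the one genuine issue --- commuting $\Gamma$ with specialization at $\lambda'$ --- and your resolution via the dominance of $\lambda' w$ and Proposition~\ref{Prop:loc_thm} is exactly what the paper intends.
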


\subsubsection{Long wall-crossing vs Ringel duality}
Set $n:=\dim \mathfrak{n}=\dim G/B$.

\begin{Lem}\label{Long_wc_perv}
Let $\lambda$ be regular and $w_0\in W^\lambda$ denote the longest element.
Then the functor $\WC_{w_0}:D^b(\OCat_\lambda)\rightarrow D^b(\OCat_\lambda)$
is perverse, where both filtrations are by codimension of support:
the subcategory $\Cat^i_j\subset \OCat_\lambda, i=1,2,$ consists of all modules
$M$ such that $n-\operatorname{GK-}\dim M\geqslant j$.
Moreover, there is an abelian equivalence $\OCat_\lambda^\vee\cong \OCat_\lambda$
that intertwines the inverse Ringel duality functor $\mathcal{R}^{-1}:\OCat_\lambda^\vee\rightarrow 
\OCat_\lambda$  with $\WC_{w_0}$.
\end{Lem}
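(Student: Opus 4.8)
The plan is to prove both assertions through the calculus of perverse equivalences of Section~\ref{S_cat}. First I would show that $\WC_{w_0}$ is a perverse equivalence for the filtration by GK-codimension, with homological shift on a simple object $L$ equal to $n-\operatorname{GK-}\dim L$. Second, using Soergel's Ringel self-duality of $\OCat_\lambda$, I would show that the inverse Ringel duality functor is also such a perverse equivalence, with the same combinatorial data. Third, Lemma~\ref{Lem:perv} then forces the two functors to agree up to $t$-exact self-equivalences of $D^b(\OCat_\lambda)$, and this is repackaged into the desired abelian equivalence.

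\emph{Perversity of $\WC_{w_0}$.} We may assume $\lambda$ is antidominant, so $\lambda w_0$ is dominant and, by~\eqref{eq:wc_bimod}, $\WC_{w_0}=\B_{\lambda w_0\leftarrow\lambda}\otimes^L_{\U_\lambda}\bullet$. For~(P1): $\WC_{w_0}$ and its quasi-inverse are compositions, along a reduced expression for $w_0$, of the functors $\WC_{s_i}^{\pm 1}$, each of which is a derived tensor product with a Harish-Chandra bimodule; as such functors do not raise GK-dimension (the homology modules are subquotients of tensor products, with associated variety inside that of the argument), both $\WC_{w_0}$ and $\WC_{w_0}^{-1}$ preserve every $\Cat_j$, so $\WC_{w_0}$ restricts to an auto-equivalence of each category of complexes with homology in $\Cat_j$. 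For~(P2)--(P3) I would argue in two stages. The first is the fact --- proved by induction on $\ell(w_0)$ from the weak braid group action and the $\slf_2$-on-$\mathbb{P}^1$ computation of the wall-crossing complex recalled in the proof of Lemma~\ref{Lem:WC_K_0} --- that $\WC_{w_0}$ sends standards to costandards in homological degree $0$, $\WC_{w_0}(\Delta_w)\cong\nabla_{ww_0}$. The second is to upgrade this to the full connectivity statement: since projectives are standardly filtered, any $M$ has a finite projective resolution which $\WC_{w_0}$ turns into a bounded complex of costandardly filtered objects, each in degree $0$, and one must show this complex has homology only in degrees $\geq n-\operatorname{GK-}\dim M$, with the bottom homology inducing the required equivalence of subquotients. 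This is the step where I expect the real work to be; the cleanest route is probably a homological description of the GK-codimension filtration --- namely that $M\in\Cat_j$ if and only if $\operatorname{Ext}^{<j}_{\OCat_\lambda}(T,M)=0$ for all tiltings $T$, equivalently that $\mathcal{R}=R\operatorname{Hom}(T,\bullet)$ is $j$-connective on $\Cat_j$ --- which reduces (P2)--(P3) to homological properties of $\mathcal{R}$. Once perversity is established, $[\WC_{w_0}]\colon u\mapsto uw_0$ on $K_0(\OCat_\lambda)=\Z W$ by Lemma~\ref{Lem:WC_K_0}, and the argument of Lemma~\ref{Lem:K_0_perv} determines the bijection $\varphi_b$; the shift is $\varphi_s(L)=n-\operatorname{GK-}\dim L$ by construction.

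\emph{Ringel duality.} By Soergel's theorem $\OCat_\lambda$ is Ringel self-dual; fix an abelian equivalence $\varpi\colon\OCat_\lambda^\vee\xrightarrow{\sim}\OCat_\lambda$ and transport the GK-codimension filtration through it. By Lemma~\ref{Lem:perv_compos_ab} applied to the $t$-exact $\varpi$, it suffices to show that $\mathcal{R}^{-1}$ is a perverse equivalence for the GK-codimension filtrations with the same combinatorial data as $\WC_{w_0}$. For this one uses: that $\mathcal{R}$ sends $\nabla(\tau)$ to $\Delta^\vee(\tau)$ in homological degree $0$ by the defining property of the Ringel dual, hence $\mathcal{R}^{-1}$ sends $\Delta^\vee(\tau)$ to $\nabla(\tau)$ in degree $0$; that $\mathcal{R}$ and $\mathcal{R}^{-1}$, being derived bimodule operations, preserve each $\mathcal{T}_{\Cat_j}$; and the same $\operatorname{Ext}$-description of $\Cat_j$, which forces the shift of a simple $L$ under $\mathcal{R}^{-1}$ to be $n-\operatorname{GK-}\dim L$. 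To match the bijections, by Lemma~\ref{Lem:K_0_perv} it is enough that $[\mathcal{R}^{-1}\circ\varpi^{-1}]$ and $[\WC_{w_0}]$ agree on $K_0(\OCat_\lambda)=\Z W$; this is a direct computation from $\mathcal{R}(\nabla(\tau))=\Delta^\vee(\tau)$ and Soergel's explicit description of $\varpi$ on standard and costandard objects, both maps being $u\mapsto uw_0$.

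\emph{Conclusion.} Now $\WC_{w_0}$ and $\mathcal{R}^{-1}\circ\varpi^{-1}$ are perverse self-equivalences of $D^b(\OCat_\lambda)$ with the same combinatorial data, so Lemma~\ref{Lem:perv} gives a $t$-exact self-equivalence $\mathcal{G}$ of $D^b(\OCat_\lambda)$, inducing the identity on $\operatorname{Irr}(\OCat_\lambda)$, with $\WC_{w_0}=\mathcal{R}^{-1}\circ\varpi^{-1}\circ\mathcal{G}$. A $t$-exact triangulated self-equivalence of a bounded derived category restricts to an exact self-equivalence of its heart, so $\varpi^{-1}\circ\mathcal{G}$ restricts to an abelian equivalence $\mu\colon\OCat_\lambda\xrightarrow{\sim}\OCat_\lambda^\vee$. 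Then $\mu^{-1}\colon\OCat_\lambda^\vee\xrightarrow{\sim}\OCat_\lambda$ is an abelian equivalence satisfying $\mathcal{R}^{-1}=\WC_{w_0}\circ\mu^{-1}$; that is, $\mu^{-1}$ intertwines $\mathcal{R}^{-1}$ with $\WC_{w_0}$, which is the assertion of the lemma.
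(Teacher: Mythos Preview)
Your strategy --- show $\WC_{w_0}$ and $\mathcal{R}^{-1}$ are both perverse for the GK-codimension filtration with matching combinatorial data, then invoke Lemma~\ref{Lem:perv} --- is sound in outline, but the key step is left genuinely open, and the route you propose for it is circular. You write that the ``cleanest route'' to (P2)--(P3) for $\WC_{w_0}$ is the characterization ``$M\in\Cat_j$ iff $\operatorname{Ext}^{<j}_{\OCat_\lambda}(T,M)=0$ for all tiltings $T$''. But this statement is exactly the assertion that $\mathcal{R}=R\operatorname{Hom}(T,\bullet)$ is $j$-connective on $\Cat_j$, i.e.\ one half of the perversity of $\mathcal{R}$ with respect to the GK-codimension filtration. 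You then use the same characterization again in the Ringel-duality paragraph to establish perversity of $\mathcal{R}^{-1}$. So both halves of your argument rest on a fact you have not proved independently, and there is no visible mechanism in your proposal for proving it: the inputs you list (standards go to costandards, HC tensor products do not raise GK-dimension, Soergel's self-duality) are not enough to pin the homological shift of a simple to its GK-codimension.

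The paper's proof supplies the missing bridge: the homological duality functor $R\operatorname{Hom}_{\U_\lambda}(\bullet,\U_\lambda)$. Its perversity for the GK-codimension filtration is classical --- it is the statement that the grade $j(M)=\min\{i:\operatorname{Ext}^i_{\U_\lambda}(M,\U_\lambda)\neq 0\}$ equals the GK-codimension, which holds because $\U_\lambda$ is Auslander--Gorenstein (Gabber); this is what the citations to \cite{BFO} and \cite{rouq_der} are providing. The paper then quotes two identification results: $\WC_{w_0}$ agrees with homological duality up to an abelian equivalence (\cite{BFO,BL}), and homological duality agrees with inverse Ringel duality up to an abelian equivalence (\cite{Gies}). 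With homological duality in the middle, perversity is inherited by both $\WC_{w_0}$ and $\mathcal{R}^{-1}$, and the intertwining follows immediately --- no appeal to Lemma~\ref{Lem:perv} or $K_0$-matching is needed. If you want to salvage your approach without citing these identifications, you would still need an independent proof that grade equals GK-codimension in $\OCat_\lambda$, and that is precisely the content carried by the homological-duality step.
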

\begin{proof}
The functor $\WC_{w_0}$ coincides with the homological duality functor $R\operatorname{Hom}_{\U_\lambda}(\bullet,\U_\lambda)$ up to precomposing
with an abelian equivalence. This is proved in \cite[Section 4.3]{BFO}
in a slightly different setting (see also \cite[Section 4]{BL}, the proof
used there can be adapted to our setting verbatim). That the homological
dulaity functor is perverse is a bit implicit in \cite[Section 4.3]{BFO}
and is more explicit in \cite[Lemma 2.5]{rouq_der}. That the homological duality
functor coincides with the inverse Ringel duality (up to precomposing with
an abelian equivalence) is checked in \cite[Section 4.1]{Gies} (again,
the proof carries to the present case verbatim).
\end{proof}

\subsection{W-algebras}
\subsubsection{Construction and basic properties}
Pick a nilpotent orbit $\Orb\subset \g$ and an element $e\in \Orb$. Consider the Slodowy
slice $S\subset \g$ that is a transverse slice to $\Orb$ in $\g$. It is constructed
as follows: we pick an $\mathfrak{sl}_2$-triple $(e,h,f)$ and set
$S:=e+\z_{\g}(f)$. This is an affine space equipped with a so called Kazdhan action
of $\C^\times$ that contracts it to $e$,
the action is given by $t.s:=t^{-2}\gamma(t)s$, where $\gamma:\C^\times\rightarrow G$
corresponds to $h$. The algebra $\C[S]$ is graded Poisson with bracket of degree $-2$.

The algebra $\C[S]$ admits a distinguished quantization called the finite
W-algebra that was first constructed by Premet in \cite{Premet1}. Let us recall
a construction in the version of Gan and Ginzburg, \cite{GG}. 

The element $h$ induces a grading on
$\g$, $\g=\bigoplus_{i\in \Z}\g(i)$. Let $(\cdot,\cdot)$ denote the  Killing
form. We set $\chi:=(e,\cdot)$ and $\g(\leqslant j):=\bigoplus_{i\leqslant j}\g(i)$.
The space $\g(-1)$ is symplectic with  respect to the form $(x,y)
\mapsto \langle \chi, [x,y]\rangle$. Pick a lagrangian subspace $\ell\subset \g(-1)$
and set $\mathfrak{m}:=\ell\oplus \g(\leqslant -2)$. Let $M\subset G$
denote the corresponding connected subgroup, it is unipotent. The subgroup
$M$ acts on $\g^*\cong \g$ in a Hamiltonian way, the moment map is just the
restriction map $\g^*\rightarrow \mathfrak{m}^*$. It is easy to see that
$S\subset \mu^{-1}(\chi)$. It turns out that the action map $M\times S\rightarrow
\mu^{-1}(\chi)$ is an isomorphism, see \cite[Lemma 2.1]{GG}, so $\C[S]=[S(\g)/S(\g)\m_\chi]^M$, where
$\m_\chi:=\{x-\langle\chi,x\rangle| x\in \m\}$ is viewed as a subspace in $\g\oplus \C$.

This motivates the following definition: $\Walg:=[U(\g)/U(\g)\m_\chi]^M$. This is a filtered
associative algebra, the filtration is inherited from the so called Kazhdan filtration
on $U(\g)$, where $\deg \g(i)=i+2$. We have $\gr\Walg=\C[S]$ (where the grading on
$\C[S]$ is introduced in a similar fashion), see \cite[Proposition 5.2]{GG}.
It was also shown in \cite[Section 5.5]{GG} that a natural homomorphism
$[\U/\U\g(\leqslant -2)_{\chi}]^{G(\leqslant -1)}\subset \Walg$ is an isomorphism.
This gives rise to a Hamiltonian action of $Q:=Z_G(e,h,f)$
on $\Walg$. The center of $U(\g)$ naturally maps into $\Walg$ and this map
is an isomorphism onto the center of $\Walg$, see the footnote
in \cite[Section 5.7]{Premet2}. Because of this, for $\lambda\in \h^*$,
we have a central reduction $\Walg_\lambda$ of $\Walg$, quantizing
$\C[S\cap \mathcal{N}]$, where $\mathcal{N}$ stands for the nilpotent cone in  $\g$.

Finally, let us mention Skryabin's equivalence between the category of $\Walg$-modules
and a suitable full subcategory of {\it Whittaker} modules (=modules, where $\m$
acts with generalized eigen-character $\chi$) in the category of $U(\g)$-modules. Namely,
we have a functor $M\mapsto M^{\m_\chi}:U(\g)\operatorname{-mod}\rightarrow
\Walg\operatorname{-mod}$ that has the left adjoint $\mathsf{Sk}:N\mapsto [U(\g)/U(\g)\m_\chi]\otimes_{\Walg}N$.
Skryabin in the appendix to \cite{Premet1} has proved that these two functors
are mutually inverse equivalences.

\subsubsection{Restriction functor for HC bimodules}\label{SSS_HC_bim_restr}
There is a notion of a $Q$-equivariant HC
$\Walg$-bimodule, \cite[Section 2.5]{HC}.  Let $\HC(\Walg,Q)$
denote the category of such bimodules. The notation
$\HC^{?,!}_{\lambda,\mu}(\Walg,Q)$ has the same meaning as for the universal
enveloping algebras.

According to \cite[Section 3.4]{HC}, there is an exact tensor {\it restriction} functor
$\HC(\U)\rightarrow \HC(\Walg,Q)$ to be denoted
by $\bullet_{\dagger}$. This functor is compatible with the associated varieties:
$\VA(X_{\dagger})=\VA(X)\cap S$. In particular, if $\VA(X)=\overline{\Orb}$,
then $X_{\dagger}$ is finite dimensional. Moreover, the restriction of
$\bullet_{\dagger}$ to the full subcategory $\HC(\U)_{\overline{\Orb}}$
of all HC bimodules $X$ with $\VA(X)=\overline{\Orb}$ is a quotient functor onto its
image. The kernel is the category $\HC(\U)_{\partial \Orb}$ of all HC
bimodules  $X$ with $\VA(X)\subset\partial\Orb$. The image is closed under
taking subquotients. In particular, if $X\in \HC(\U)_{\overline{\Orb}}$ is simple,
then $X_{\dagger}$ is a simple  $Q$-equivariant finite dimensional $\Walg$-bimodule.
Another useful property is that $\bullet_{\dagger}$ intertwines Tor's and Ext's,
compare with \cite[Lemma 3.11]{rouq_der}
and \cite[Section 5.5]{BL}. Let us also point out that by the construction given in
\cite[Section 3.4]{HC}, the functor $\bullet_{\dagger}$ maps
$\HC_{\lambda,\mu}^{?,!}(\U)$ to $\HC_{\lambda,\mu}^{?,!}(\Walg,Q)$,
where $?,!\in \{1,\infty\}$.

The functor $\bullet_{\dagger}: \HC(\U)_{\overline{\Orb}}\rightarrow \HC(\Walg,Q)_{fin}$
admits a right adjoint $\bullet^{\dagger}$. The cokernel of $\B\mapsto (\B_{\dagger})^{\dagger}$
is in $\HC(\U)_{\partial\Orb}$ for any $\B\in \HC(\U)_{\overline{\Orb}}$. The functor
$\bullet^{\dagger}$ maps $\HC_{\lambda,\mu}^{1,1}(\Walg,Q)_{fin}$ to
$\HC_{\lambda,\mu}^{1,1}(\U)_{\overline{\Orb}}$ for any $\lambda,\mu$.
For a $Q$-stable ideal $\I\subset \Walg$ of finite codimension, we write
$\I^{\dagger}$ for the kernel of the natural map $\U\rightarrow (\Walg/\I)^{\dagger}$.

These constructions admit a ramification that will be used later.
Namely, let $\param_1\subset \h^*$ be an affine subspace and let $\chi\in\h^*$.
Consider the category $\HC^{1,1}_{\param_1,\chi}(\U)$ of all HC $\U_{\param_1+\chi}$-$\U_{\param_1}$-bimodules,
where the adjoint action of $\h$ (that maps naturally to both $\C[\param_1],\C[\param_1+\chi]$)
is given by $\chi$. Here we set $\U_{\param_1}:=\C[\param_1]\otimes_{\C[\h^*]^W}\U$
and $\U_{\param_1+\chi}:=\C[\param_1+\chi]\otimes_{\C[\h^*]^W}\U$.  Define the category
$\HC^{1,1}_{\param_1,\chi}(\Walg,Q)$ in a similar fashion. We consider the subcategory
$\HC^{1,1}_{\param_1,\chi}(\U)_{\overline{\Orb}}\subset \HC^{1,1}_{\param_1,\chi}(\U)$
consisting of all bimodules $\B$ such that the intersection of $\VA(\B)\subset
\param_0\times_{\h^*/W}\g^*$ with $\mathcal{N}$ lies in $\overline{\Orb}$. Here we write
$\param_0\subset \h^*$ for the associated vector subspace of $\param_1$. Similarly,
define the subcategory $\HC^{1,1}_{\param_1,\chi}(\Walg,Q)_{fin}$, it consists of
all bimodules in $\HC^{1,1}_{\param_1,\chi}(\Walg,Q)$ that are finitely generated over
$\C[\param_1]$. We then have an exact functor
$\HC^{1,1}_{\param_1,\chi}(\U)\rightarrow \HC^{1,1}_{\param_1,\chi}(\Walg,Q)$
that restricts to $\HC^{1,1}_{\param_1,\chi}(\U)_{\overline{\Orb}}\rightarrow
\HC^{1,1}_{\param_1,\chi}(\Walg,Q)_{fin}$. The restriction  has a right adjoint
$\bullet^{\dagger}:\HC^{1,1}_{\param_1,\chi}(\Walg,Q)_{fin}\rightarrow
\HC^{1,1}_{\param_1,\chi}(\U)_{\overline{\Orb}}$. Note that, for $\chi,\chi'\in \h^*$,
we have tensor product functors \begin{align*}&\HC^{1,1}_{\param_1+\chi,\chi'}(\U)\times\HC^{1,1}_{\param_1,\chi}(\U)
\rightarrow \HC^{1,1}_{\param_1,\chi+\chi'}(\U),\\
&\HC^{1,1}_{\param_1+\chi,\chi'}(\Walg,Q)\times\HC^{1,1}_{\param_1,\chi}(\Walg,Q)
\rightarrow \HC^{1,1}_{\param_1,\chi+\chi'}(\Walg,Q).\end{align*}
The functor $\bullet_{\dagger}$ intertwines those functors.

The functor $\bullet_{\dagger}$ also can be defined using the quantum Hamiltonian
reduction, it maps a HC $\U$-bimodule $X$ to $[X/X\m_\chi]^M$, \cite[Section 3.5]{HC}.
Note that $\mathsf{Sk}^{-1}(X\otimes_{U(\g)}\mathsf{Sk}(N))$
is naturally identified with $X_{\dagger}\otimes_{\Walg}N$. This is a special case of
\cite[Theorem 5.11]{LO}.

\subsubsection{Classification of finite dimensional irreducible representations}
We are going to recall a classification of the finite dimensional irreducible
modules over $\Walg_\lambda$, \cite{HC,LO}. Let us start with results obtained in \cite{HC}.

Let $\operatorname{Pr}(\U_\lambda)$ denote the set of primitive ideals in $\U_\lambda$.
Inside we have the subset $\operatorname{Pr}_{\Orb}(\U_\lambda)$ of all primitive
ideals $\J$ such that  $\VA(\U_\lambda/\J)=\overline{\Orb}$.
For $\J\in \operatorname{Pr}_{\Orb}(\U_\lambda)$, the ideal $\J_{\dagger}\subset \Walg$
is $Q$-stable, has finite codimension, and is maximal with these properties.
The maximal ideals containing $\J_{\dagger}$ are $Q$-conjugate. This allows to assign
an $A$-orbit in $\operatorname{Irr}_{fin}(\Walg_\lambda)$ to $\J$, in fact, every finite
dimensional irreducible $\Walg_\lambda$-module lies in one of these orbits. This
gives rise to a bijection $\operatorname{Irr}_{fin}(\Walg_\lambda)/A\xrightarrow{\sim}
\operatorname{Pr}_{\Orb}(\U_\lambda)$.

When $\lambda$ is integral, one can compute the $A$-orbit over a given primitive ideal,
this was done in \cite{LO}. Let us state the result in the case
when $\lambda$, in addition, is regular. Namely, let $\operatorname{Spr}_{\Orb}$ denote the
Springer $W\times A$-module corresponding to $\Orb$. The set
$\operatorname{Pr}_{\Orb}(\U_\lambda)$ is non-empty if and only if $\Orb$
is special and we will be assuming this from now on. So to $\Orb$ we can assign
the two-sided cell $\bc$. Then the Lusztig group $\bA$  can be defined
as the quotient of $A$ by the kernel of the $A$-action on $\operatorname{Hom}_W([\bc],
\operatorname{Spr}_{\Orb})$, where we write $[\bc]$ for the two-sided cell bimodule
viewed as a left $W$-module, see \cite[Section 13.1]{Lusztig_orange}. Recall that $\operatorname{Pr}_{\Orb}(\U_\lambda)$
is in a bijection with the set of the left cells inside $\bc$. It was shown
in \cite[Sections 6.4-6.8]{LO}  that for a left cell $\sigma\subset \bc$, there is a unique
(up to conjugacy) subgroup $H_\sigma\subset \bA$ such that $\C(\bA/H_\sigma)
\cong \Hom_W([\sigma], \operatorname{Spr}_{\Orb})$ (an isomorphism of
$A$-modules). The main result of \cite{LO}, Theorem 1.1 there, is that the $A$-orbit corresponding
to the primitive ideal indexed by $\sigma$ coincides with $\bA/H_\sigma$.

For brevity, let us write $Y_\lambda$ instead of $\operatorname{Irr}_{fin}(\Walg_\lambda)$.
Consider also the category $\mathfrak{J}^{\Orb}_\lambda$ of all semisimple
objects in $\operatorname{HC}^{1,1}_{\lambda,\lambda}(\U)_{\overline{\Orb}}/
\operatorname{HC}^{1,1}_{\lambda,\lambda}(\U)_{\partial\Orb}$. This category is closed under
taking the tensor products and internal Hom's, see \cite[Section 5.3]{LO}.
The tensor category $\mathfrak{J}^{\Orb}_\lambda$ acts on the category
$\Walg_{\lambda}\operatorname{-mod}_{fin}^{ss}$ that can be thought as the
category of sheaves of finite dimensional vector spaces on $Y_\lambda$.
The action is given by $X.V:=X_{\dagger}\otimes_{\Walg_\lambda}V$.
The category $\mathfrak{J}^{\Orb}_\lambda$ is identified with $\operatorname{Sh}^{\bA}(Y_\lambda\times Y_\lambda)$
(where the tensor product is given by convolving the sheaves) in such a way that
the action on $\Walg_{\lambda}\operatorname{-mod}^{ss}_{fin}$ becomes the
convolution action of $\operatorname{Sh}^{\bA}(Y_\lambda\times Y_\lambda)$ on
$\operatorname{Sh}(Y_\lambda)$, see \cite[Remark 7.7]{LO}.

\subsubsection{Localization theorems}
Here we are going to recall the localization theorem for W-algebras,
\cite{Ginzburg_HC,DK}. Modules for $\Walg_\lambda$  localize to
modules over a certain sheaf of non-commutative algebras on the {\it Slodowy variety}
$\widetilde{S}$. By definition, $\widetilde{S}$ is the preimage of the $S$ under the Springer morphism
$T^*(G/B)\rightarrow \g^*$. This is a smooth symplectic variety that coincides
with $\mu^{-1}_{T^*(G/B)}(\chi)/M$, where we write $\mu_{T^*(G/B)}$
for the moment map for the $M$-action on $T^*(G/B)$.
Note that the natural morphism $\tilde{S}\rightarrow S\cap \mathcal{N}$
is a resolution of singularities.

The sheaf of non-commutative algebras of interest will be denoted by $\Wf_\lambda$.
It is a sheaf in the conical topology (the topology where ``open''
means Zariski open and $\C^\times$-stable, where we consider the
Kazhdan action on $\widetilde{S}$). This sheaf is filtered in
such a way that the filtration is compete and separated and the associated
graded is $\mathcal{O}_{\widetilde{S}}$ so that $\Wf_\lambda$
is a filtered quantization of $\widetilde{S}$. By definition,
$\Wf_{\lambda}$ is the reduction $[\mathcal{D}^{\lambda}_{G/B}/\mathcal{D}^\lambda_{G/B}
\m_\chi]^M$. Since the action of $M$ on $\mu^{-1}_{T^*(G/B)}(\chi)$ is free, we see that
$\mathfrak{W}_\lambda$ is indeed a quantization of
$\tilde{S}$. Note that there is a natural filtered algebra homomorphism
$\Walg_\lambda\rightarrow \Gamma(\mathfrak{W}_\lambda)$. The associated graded of this homomorphism
is an isomorphism $\C[S\cap \mathcal{N}]\xrightarrow{\sim}\C[\tilde{S}]$ and so
we see that $\Gamma(\mathfrak{W}_\lambda)=\Walg_\lambda$. We also remark that the isomorphism
$\gr\mathfrak{W}_\lambda=\mathcal{O}_{\tilde{S}}$ together with $H^i(\tilde{S},\mathcal{O}_{\tilde{S}})=0$
for $i>0$ imply $H^i(\tilde{S},\mathfrak{W}_\lambda)=0$ for $i>0$.

Let us proceed to  the localization theorems.
First of all, the category of coherent $\Wf_\lambda$-modules is equivalent
to the category $\operatorname{Coh}^{\m,\chi}(D^\lambda_{G/B})$
of twisted $(\m,\chi)$-equivariant coherent $D^\lambda_{G/B}$-modules
(the equivalence is again given by taking $\m$-semiinvariants with character $\chi$).
This is an easy consequence of the observation that the $M$-action on
$\mu^{-1}_{T^*(G/B)}$ is free.
Moreover, for $M\in \operatorname{Coh}(\mathfrak{W}_\lambda)$, we have
\begin{equation}\label{eq:inv_glob}
\Gamma(M)^{\m_\chi}=\Gamma(M^{\m_\chi}).
\end{equation}
The following claim is a direct corollary of (\ref{eq:inv_glob}) and (1) of
Proposition \ref{Prop:loc_thm}.

\begin{Prop}\label{Prop:W_loc}
Suppose $\lambda$ is regular and dominant. Then the global section functor
$\Gamma_\lambda: \operatorname{Coh}(\Wf_\lambda)\rightarrow \Walg_\lambda\operatorname{-mod}$
is a category equivalence. A quasi-inverse equivalence is given by the localization
functor $\Wf_\lambda\otimes_{\Walg_\lambda}\bullet$.
\end{Prop}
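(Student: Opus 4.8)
The plan is to reduce the claim to the Beilinson--Bernstein equivalence of Proposition \ref{Prop:loc_thm}(1) by comparing the two Hamiltonian reduction pictures at play. First I would recall the relevant identifications: on the W-algebra side, $\Walg_\lambda=[\U_\lambda/\U_\lambda\m_\chi]^M$ and Skryabin's equivalence identifies $\Walg_\lambda\operatorname{-mod}$ with the category $\U_\lambda\operatorname{-mod}^{\m,\chi}$ of finitely generated $(\m,\chi)$-equivariant (Whittaker) $\U_\lambda$-modules, via $N\mapsto\mathsf{Sk}(N)$ with quasi-inverse $V\mapsto V^{\m_\chi}$; on the geometric side, the freeness of the $M$-action on $\mu^{-1}_{T^*(G/B)}(\chi)$ gives, as recalled just above, an equivalence $\operatorname{Coh}(\Wf_\lambda)\cong\operatorname{Coh}^{\m,\chi}(D^\lambda_{G/B})$ realized by $N\mapsto N^{\m_\chi}$.

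Next, assuming $\lambda$ regular and dominant, Proposition \ref{Prop:loc_thm}(1) gives an equivalence $\Gamma_\lambda\colon\operatorname{Coh}(D^\lambda_{G/B})\xrightarrow{\sim}\U_\lambda\operatorname{-mod}$ with quasi-inverse $\Loc_\lambda$. Since the $\m$-action on the global sections of a $D^\lambda_{G/B}$-module is induced by the algebra map $U(\m)\to\Gamma(D^\lambda_{G/B})=\U_\lambda$, the functor $\Gamma_\lambda$ sends $(\m,\chi)$-equivariant $D$-modules to $(\m,\chi)$-equivariant $\U_\lambda$-modules and $\Loc_\lambda$ does the reverse; hence $\Gamma_\lambda$ restricts to an equivalence $\operatorname{Coh}^{\m,\chi}(D^\lambda_{G/B})\xrightarrow{\sim}\U_\lambda\operatorname{-mod}^{\m,\chi}$. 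I would then check that, under the two identifications above, this restricted functor is precisely the global section functor $\operatorname{Coh}(\Wf_\lambda)\to\Walg_\lambda\operatorname{-mod}$: if $M\in\operatorname{Coh}(\Wf_\lambda)$ corresponds to $\widetilde M\in\operatorname{Coh}^{\m,\chi}(D^\lambda_{G/B})$ with $M=\widetilde M^{\m_\chi}$, then (\ref{eq:inv_glob}) yields $\Gamma(\Wf_\lambda,M)=\Gamma(\widetilde M^{\m_\chi})=\Gamma(D^\lambda_{G/B},\widetilde M)^{\m_\chi}$, which is exactly the $\Walg_\lambda$-module corresponding under Skryabin's equivalence to the Whittaker module $\Gamma(D^\lambda_{G/B},\widetilde M)$. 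This establishes that $\Gamma_\lambda\colon\operatorname{Coh}(\Wf_\lambda)\to\Walg_\lambda\operatorname{-mod}$ is an equivalence.

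Finally, since $\Gamma(\Wf_\lambda)=\Walg_\lambda$, the localization functor $\Wf_\lambda\otimes_{\Walg_\lambda}\bullet$ is left adjoint to $\Gamma_\lambda$ by the tensor--hom adjunction for sheaves of algebras, and a left adjoint of an equivalence is automatically a quasi-inverse; this gives the last assertion. The only point that really needs verification is the compatibility of the two reduction pictures with taking global sections, which is exactly the content of (\ref{eq:inv_glob}); everything else is formal manipulation with equivariant categories and adjunctions, so I do not anticipate a serious obstacle.
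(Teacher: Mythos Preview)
Your proposal is correct and is exactly the argument the paper has in mind: the text immediately preceding the proposition states that it is a direct corollary of (\ref{eq:inv_glob}) and Proposition~\ref{Prop:loc_thm}(1), and what you have written simply unpacks this by passing through the equivalences $\operatorname{Coh}(\Wf_\lambda)\cong\operatorname{Coh}^{\m,\chi}(D^\lambda_{G/B})$ and $\Walg_\lambda\operatorname{-mod}\cong\U_\lambda\operatorname{-mod}^{\m,\chi}$ and checking that the Beilinson--Bernstein equivalence restricts appropriately.
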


Let us proceed to the derived localization. By \cite[Theorem 1.6]{Bern_Lunts}, the naive derived
category $D^b(\operatorname{Coh}^{\m,\chi}(D^\lambda_{G/B}))$ is naturally
isomorphic to the equivariant derived category $D^b_{\m,\chi}(\operatorname{Coh}(D^\lambda_{G/B}))$
because the $M$-action on $\mu^{-1}_{T^*(G/B)}(\chi)$ is free. Similarly, $D^b(\Walg_\lambda\operatorname{-mod}^{\m,\chi})=
D^b_{\m,\chi}(\Walg_\lambda\operatorname{-mod})$. We deduce that if the functor
$R\Gamma_\lambda: D^b(\operatorname{Coh}(D^\lambda_{G/B}))\rightarrow D^b(\Walg_\lambda\operatorname{-mod})$
is an equivalence, then the same is true for $R\Gamma_\lambda:
\operatorname{Coh}^{\m,\chi}(D^\lambda_{G/B})\rightarrow
D^b(\Walg_\lambda\operatorname{-mod}^{\m,\chi})$. Together with
(\ref{eq:inv_glob}) and equivalences $\operatorname{Coh}^{\m,\chi}(D^\lambda_{G/B})\cong
\operatorname{Coh}(\mathfrak{W}_\lambda),
\U_\lambda\operatorname{-mod}^{\m,\chi}\cong \Walg_\lambda\operatorname{-mod}$
this yields the following result.

\begin{Prop}\label{Prop:W_der_loc}
Suppose that $\lambda$ is regular. Then the functor $R\Gamma_\lambda:D^b(\Coh(\Wf_\lambda))
\rightarrow D^b(\Walg_\lambda\operatorname{-mod})$ is a category equivalence.
A quasi-inverse functor is $L\Loc_\lambda:=\Wf_\lambda\otimes^L_{\Walg_\lambda}\bullet$.
\end{Prop}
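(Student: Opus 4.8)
The plan is to descend the Beilinson--Bernstein derived localization equivalence of Proposition~\ref{Prop:loc_thm}(2) along the quantum Hamiltonian reduction by $M$, following the discussion that precedes the statement essentially verbatim. Since $\lambda$ is regular, Proposition~\ref{Prop:loc_thm}(2) provides a triangulated equivalence $R\Gamma_\lambda\colon D^b(\Coh(D^\lambda_{G/B}))\xrightarrow{\sim}D^b(\U_\lambda\operatorname{-mod})$ with quasi-inverse $L\Loc_\lambda$. Both functors are compatible with the Hamiltonian $M$-action: the $M$-action lifts to $D^\lambda_{G/B}$ and acts on $\U_\lambda$, and the moment map on the $D$-module side is precisely the one used to define $(\m,\chi)$-equivariance. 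So I would first upgrade $R\Gamma_\lambda$ to an equivalence $D^b_{\m,\chi}(\Coh(D^\lambda_{G/B}))\xrightarrow{\sim}D^b_{\m,\chi}(\U_\lambda\operatorname{-mod})$ of equivariant derived categories.

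Next I would invoke the three inputs already recorded above. Because the $M$-action on $\mu^{-1}_{T^*(G/B)}(\chi)$ is free, \cite[Theorem~1.6]{Bern_Lunts} identifies these equivariant derived categories with the naive derived categories $D^b(\Coh^{\m,\chi}(D^\lambda_{G/B}))$ and $D^b(\U_\lambda\operatorname{-mod}^{\m,\chi})$. The abelian equivalences $\Coh^{\m,\chi}(D^\lambda_{G/B})\cong\Coh(\Wf_\lambda)$ and $\U_\lambda\operatorname{-mod}^{\m,\chi}\cong\Walg_\lambda\operatorname{-mod}$, both given by taking $\m$-semiinvariants with character $\chi$, then transport the problem to the Slodowy variety $\widetilde S$. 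Finally, formula~(\ref{eq:inv_glob}), $\Gamma(M)^{\m_\chi}=\Gamma(M^{\m_\chi})$, together with its evident derived refinement, shows that under these identifications the transported functor is exactly $R\Gamma_\lambda\colon D^b(\Coh(\Wf_\lambda))\to D^b(\Walg_\lambda\operatorname{-mod})$, and that the transported quasi-inverse is $L\Loc_\lambda=\Wf_\lambda\otimes^L_{\Walg_\lambda}\bullet$. This yields the assertion.

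The one genuinely delicate point — hence the step I expect to be the main obstacle — is the first paragraph, namely verifying that $R\Gamma_\lambda$ and $L\Loc_\lambda$ carry honest $(\m,\chi)$-equivariant structures and that the Bernstein--Lunts comparison of naive and equivariant derived categories is compatible with them, so that no higher derived functors of the reduction intervene. This is exactly where the freeness of the $M$-action on $\mu^{-1}_{T^*(G/B)}(\chi)$ is essential: it makes $\m$-semiinvariants exact on the equivariant categories (as for the abelian statement in Proposition~\ref{Prop:W_loc}) and makes \cite[Theorem~1.6]{Bern_Lunts} applicable. Everything else is formal manipulation with the equivalences and functors already set up.
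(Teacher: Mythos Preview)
Your proposal is correct and follows essentially the same route as the paper: the paper's argument is the paragraph immediately preceding the proposition, which uses Proposition~\ref{Prop:loc_thm}(2), \cite[Theorem~1.6]{Bern_Lunts} to identify naive and equivariant derived categories via freeness of the $M$-action, the abelian equivalences $\Coh^{\m,\chi}(D^\lambda_{G/B})\cong\Coh(\Wf_\lambda)$ and $\U_\lambda\operatorname{-mod}^{\m,\chi}\cong\Walg_\lambda\operatorname{-mod}$, and (\ref{eq:inv_glob}). You have also correctly singled out the one nontrivial ingredient, namely the Bernstein--Lunts comparison and the freeness of the $M$-action on $\mu^{-1}_{T^*(G/B)}(\chi)$.
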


\subsubsection{Wall-crossing bimodules for W-algebras}
Let $\lambda$ be regular and $w\in W^\lambda$ be such that $\lambda w$ is regular and dominant.
So we have an equivalence $\WC_{\lambda w\leftarrow \lambda}: D^b(\Walg_\lambda\operatorname{-mod})
\xrightarrow{\sim} D^b(\Walg_\lambda\operatorname{-mod})$ defined similarly to the case of
$\U_\lambda$, see \ref{SS_WC_fun}. 
For similar reasons, it is given by the tensor product with $\B^{\Walg}_{\lambda w\leftarrow
\lambda}\in \HC^{1,1}_{\lambda,\lambda}(\Walg,Q)$ that is the global sections of a suitable
quantized line bundle $\B^{\Walg,loc}_{\lambda w\leftarrow
\lambda}$, compare to \ref{SS_WC_bim}.

\begin{Lem}\label{Lem:WC_Walg}
We have an isomorphism $\B^{\Walg}_{\lambda w\leftarrow \lambda}\cong (\B_{\lambda w\leftarrow \lambda})_{\dagger}$.
\end{Lem}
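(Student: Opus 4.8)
The plan is to compare the two quantized line bundles geometrically on the Slodowy variety, using that $\bullet_\dagger$ and its geometric analogue (quantum Hamiltonian reduction from $T^*(G/B)$ to $\widetilde S$) intertwine all the relevant constructions. First I would recall that $\B_{\lambda w\leftarrow\lambda}=\Gamma(\B^{loc}_{\lambda w\leftarrow\lambda})$, where $\B^{loc}_{\lambda w\leftarrow\lambda}$ is the $\mathcal D^{\lambda w}_{G/B}$-$\mathcal D^{\lambda}_{G/B}$-bimodule quantizing the line bundle $\mathcal O_{\lambda w-\lambda}$ on $T^*(G/B)$, and similarly $\B^{\Walg}_{\lambda w\leftarrow\lambda}=\Gamma(\B^{\Walg,loc}_{\lambda w\leftarrow\lambda})$ with $\B^{\Walg,loc}_{\lambda w\leftarrow\lambda}$ the $\Wf_{\lambda w}$-$\Wf_\lambda$-bimodule quantizing the restriction of $\mathcal O_{\lambda w-\lambda}$ to $\widetilde S\subset T^*(G/B)$ (i.e. to $\mu^{-1}_{T^*(G/B)}(\chi)/M$). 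Since $\Wf_\lambda=[\mathcal D^\lambda_{G/B}/\mathcal D^\lambda_{G/B}\m_\chi]^M$ by definition, the natural candidate is that $\B^{\Walg,loc}_{\lambda w\leftarrow\lambda}$ is itself the $\m_\chi$-reduction $[\B^{loc}_{\lambda w\leftarrow\lambda}/\B^{loc}_{\lambda w\leftarrow\lambda}\m_\chi]^M$.

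Next I would verify this at the level of quantized line bundles: both $[\B^{loc}_{\lambda w\leftarrow\lambda}/\B^{loc}_{\lambda w\leftarrow\lambda}\m_\chi]^M$ and $\B^{\Walg,loc}_{\lambda w\leftarrow\lambda}$ are $\Wf_{\lambda w}$-$\Wf_\lambda$-bimodules that are filtered quantizations of the line bundle $\mathcal O_{\lambda w-\lambda}|_{\widetilde S}$ on $\widetilde S$, and such a quantization is unique up to isomorphism (this is exactly how $\B^{\Walg,loc}$ was characterized, paralleling the uniqueness used to define $\B^{loc}_{\lambda w\leftarrow\lambda}$ on $T^*(G/B)$; the freeness of the $M$-action on $\mu^{-1}_{T^*(G/B)}(\chi)$ makes the reduction of a line-bundle quantization again a line-bundle quantization). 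Hence the two agree on $\widetilde S$.

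Finally I would pass to global sections. On the $D$-module side one has $\Gamma(\B^{loc}_{\lambda w\leftarrow\lambda})=\B_{\lambda w\leftarrow\lambda}$, and taking $\m_\chi$-invariants of global sections equals global sections of the reduction by the identity (\ref{eq:inv_glob}), $\Gamma(X)^{\m_\chi}=\Gamma(X^{\m_\chi})$ (applied to the bimodule $\B^{loc}_{\lambda w\leftarrow\lambda}$, which is a coherent $\mathcal D^\lambda_{G/B}$-module from the right and hence to which (\ref{eq:inv_glob}) applies after the usual vanishing $H^{>0}(\widetilde S,\Wf_\lambda)=0$ and $H^{>0}$-vanishing for line bundles on $T^*(G/B)$). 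This gives
\[
(\B_{\lambda w\leftarrow\lambda})_\dagger=[\B_{\lambda w\leftarrow\lambda}/\B_{\lambda w\leftarrow\lambda}\m_\chi]^M=\Gamma\!\left([\B^{loc}_{\lambda w\leftarrow\lambda}/\B^{loc}_{\lambda w\leftarrow\lambda}\m_\chi]^M\right)=\Gamma(\B^{\Walg,loc}_{\lambda w\leftarrow\lambda})=\B^{\Walg}_{\lambda w\leftarrow\lambda},
\]
where the first equality is the Hamiltonian-reduction description of $\bullet_\dagger$ from \cite[Section 3.5]{HC}. The main obstacle I expect is the bookkeeping in the middle step: making precise that reduction of quantized line bundles commutes with the localization/global-sections equivalences and that the uniqueness of the quantized line bundle on $\widetilde S$ applies to a bimodule coming from a reduction (one must check it really is supported in homological degree zero and is a line bundle after taking $\gr$, using freeness of the $M$-action and the cohomology vanishing on $\widetilde S$). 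Once that is in place, everything else is formal, and one may alternatively phrase the whole argument as an instance of the compatibility of $\bullet_\dagger$ with tensor products and localization already recorded after \cite[Theorem 5.11]{LO} and in \ref{SSS_HC_bim_restr}.
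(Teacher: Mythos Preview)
Your proposal is correct and follows essentially the same route as the paper: both arguments reduce to the identification $\B^{\Walg,loc}_{\lambda w\leftarrow\lambda}\cong[\B^{loc}_{\lambda w\leftarrow\lambda}/\B^{loc}_{\lambda w\leftarrow\lambda}\m_\chi]^M$ on $\widetilde S$, which holds because both sides quantize the same line bundle $\mathcal O_{\lambda w-\lambda}|_{\widetilde S}$ and such a quantization is unique (the paper cites \cite[Section 5.1]{BPW}). The only organizational difference is that the paper tests the bimodule isomorphism on all $\Walg_\lambda$-modules $N$ via the Skryabin equivalence (using $X_\dagger\otimes_{\Walg}N\cong\mathsf{Sk}^{-1}(X\otimes_{\U}\mathsf{Sk}(N))$ and the compatibility of localization/global sections with Skryabin), whereas you apply the Hamiltonian-reduction description of $\bullet_\dagger$ directly to the bimodule and invoke (\ref{eq:inv_glob}); you even flag this alternative at the end, so the two write-ups are interchangeable.
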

\begin{proof}
It is sufficient to establish an isomorphism $\B^{\Walg}_{\lambda w\leftarrow \lambda}\otimes_{\Walg_\lambda}
N\cong (\B_{\lambda w\leftarrow \lambda})_{\dagger}\otimes_{\Walg_\lambda}N$ for any
$N\in \Walg_\lambda\operatorname{-mod}$. Recall, \ref{SSS_HC_bim_restr}, that the right hand side is
$\mathsf{Sk}^{-1}(\B_{\lambda w\leftarrow \lambda}\otimes_{\U_\lambda}\mathsf{Sk}(N))$.
Using that the localization and global section functors commute with the Skryabin equivalence,
we reduce to checking the equality
$\B^{\Walg,loc}_{\lambda w\leftarrow \lambda}\cong [\B^{loc}_{\lambda w\leftarrow \lambda}/\B^{loc}_{\lambda w\leftarrow
\lambda}]^{M}$. This follows from the observation that both quantize the line bundle $\mathcal{O}_\chi$
on $\tilde{S}$ to a $\Wf_{\lambda w}$-$\Wf_{\lambda}$-bimodule and that such a quantization is unique,
see, e.g., \cite[Section 5.1]{BPW}.
\end{proof}

\section{Cactus group actions}\label{S_cacti}
\subsection{Perversity of $\WC_{D_1}$, statement}
We fix a subdiagram $D_1$ in the Dynkin diagram $D$ of $\g$. Let $\param_0$ denote the span of the fundamental
weights corresponding to the vertices in $D\setminus D_1$. Pick a regular  integral weight $\lambda$
such that $\lambda w_{D_1}$ is dominant and set $\param_1:=\lambda+\param_0$, this is an affine subspace
in $\h^*$ containing $\lambda$. Finally, set $\chi:=\lambda w_{D_1}-\lambda$.

Define the algebras $\U_{\param_1}:=\C[\param_1]\otimes_{\C[\h^*]^W}U(\g), \Walg_{\param_1}:=\C[\param_1]
\otimes_{\C[\h^*]^W}\Walg$. Recall that we have a $\U_{\param_1}$-bimodule $\U_{\param_1,\chi}$ defined as the global sections of the $D_{G/B}^{\param_1+\chi}$-$D_{G/B}^{\param_1}$-bimodule
quantizing the line bundle $\mathcal{O}_\chi$. Recall also (Lemma \ref{Lem:WC_spec})
that when  $\lambda_1+\chi$ is regular dominant,
the specialization $\U_{\lambda_1,\chi}$ coincides with the wall-crossing bimodule
$\B_{\lambda_1 w_{D_1}\leftarrow \lambda_1}$. Further, set $\Walg_{\param_1,\chi}:=(\U_{\param_1,\chi})_{\dagger}$.

We need to produce to produce chains of ideals in the algebras $\U_{\param_0}, \U_{\param_0'}$.

\begin{Prop}\label{Prop:ideal_chain}
There is a chain of ideals $\U_{\param_1}=\J_{0,\param_1}\supset \J_{1,\param_1}\supset
\ldots\supset \J_{n+1,\param_1}=\{0\}$, where $n:=\dim G/B$, with the following property:
for a Weil generic  point $\lambda_1\in \param_0$, the specialization $\J_{i,\lambda_1}$
is the minimal ideal with $\dim \VA(\U_{\lambda_1}/\J_{i,\lambda_1})<2i$.
\end{Prop}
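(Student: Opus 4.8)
The plan is to build the chain of ideals not directly in $\U_{\param_1}$ but by transporting a natural filtration from the $W$-algebra side and then applying the $\bullet^{\dagger}$ construction fiberwise. The key point is that for a Weil generic $\lambda_1\in\param_0$ the orbit geometry is as uniform as possible: the associated varieties that occur are closures of nilpotent orbits, stratified by dimension, and the condition $\dim\VA(\U_{\lambda_1}/\J)<2i$ singles out the ideal of functions vanishing on the union of orbit closures of dimension $<2i$. So first I would fix, for each $i$, the closed subvariety $Z_i\subset\mathcal N$ which is the union of all nilpotent orbit closures of dimension $<2i$; this gives a strictly decreasing chain $\mathcal N=Z_0\supsetneq\cdots$ refining by the finitely many orbit dimensions, and after re-indexing one gets exactly $n+1$ steps since $\dim\mathcal N = 2n$... here I would need to be a little careful and allow the chain to have repeats so that the index $i$ literally tracks $\lceil (\dim+1)/2\rceil$, matching the inequality in the statement. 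For each orbit $\Orb$ with $\overline\Orb$ a component of some $Z_i$, the minimal ideal of $\U_{\lambda_1}$ with associated variety inside $Z_i$ is the intersection of the primitive (or just $\VA$-defined) ideals attached to the orbits on the boundary; the content of the localization/restriction machinery from \ref{SSS_HC_bim_restr} is that these ideals deform nicely in $\lambda_1$.

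Next I would make the deformation precise. Working over the affine base $\param_1$ (equivalently over $\param_0$ after the translation $\lambda$), one has $\U_{\param_1}$ and its associated graded $\C[\param_0\times_{\h^*/W}\g^*]$; the closed subschemes $\param_0\times_{\h^*/W} Z_i$ are $\C^\times$-stable and their ideals lift to a chain of two-sided ideals $\J_{i,\param_1}\subset\U_{\param_1}$ by taking, say, the preimage under $\U_{\param_1}\twoheadrightarrow\U_{\param_1}/\J$ where $\J$ is assembled from the $\bullet^{\dagger}$-images of the (finite-codimension, $Q$-stable) ideals in the relevant $\Walg_{\param_1}$'s — this is exactly the relative version of $\I\mapsto\I^\dagger$ discussed at the end of \ref{SSS_HC_bim_restr}. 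Because $\bullet_\dagger$ and $\bullet^\dagger$ commute with base change to a point $\lambda_1\in\param_0$ (this is part of the ramified setup recalled there), the specialization $\J_{i,\lambda_1}$ is the ideal of $\U_{\lambda_1}$ cut out on the spectrum side by $Z_i$, i.e. the minimal ideal with $\VA(\U_{\lambda_1}/\J_{i,\lambda_1})\subseteq Z_i$. Then I would invoke that for Weil generic $\lambda_1$ the only associated varieties that can occur are unions of orbit closures (semicontinuity of $\VA$ in families, plus the classification of possible $\VA$'s of quotients of $\U_{\lambda_1}$), so "$\VA\subseteq Z_i$" is equivalent to "$\dim\VA<2i$", giving the stated characterization.

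The main obstacle is the genericity/semicontinuity step: one must show that for a Weil generic point of $\param_0$ the chain really specializes to the \emph{minimal} ideal with the dimension bound, i.e. that no extra collapsing or merging of ideals happens and that the specialized ideals stay two-sided and have exactly the predicted associated variety. I would handle this by upper semicontinuity of the dimension of the associated variety of $\U_{\param_1}/\J_{i,\param_1}$ over $\param_0$ (so $\dim\VA(\U_{\lambda_1}/\J_{i,\lambda_1})<2i$ holds on a dense open set) together with a lower bound coming from the fact that, generically, the ideals attached to distinct orbits remain distinct and their intersection has associated variety the full $Z_i$ — here the results of \cite{LO} recalled in \ref{SSS_HC_bim_restr}, identifying $\operatorname{Pr}_\Orb(\U_\lambda)$ with left cells and controlling the $A$-orbits, are what guarantee that in the integral regular case nothing degenerates. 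The minimality then follows because any ideal with $\dim\VA<2i$ contains, for each orbit $\Orb$ of dimension $\geqslant 2i$, the corresponding primitive ideal, hence contains their intersection $\J_{i,\lambda_1}$. Finally I would note $\J_{0,\param_1}=\U_{\param_1}$ (the bound $\dim<0$ is vacuous, forcing the zero quotient — or rather $i=0$ is the trivial step) and $\J_{n+1,\param_1}=0$ since $\dim\VA<2(n+1)$ is automatic as $\dim\mathcal N=2n$, so the chain has the asserted length.
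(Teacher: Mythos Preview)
Your high-level strategy coincides with the paper's: construct the chain by pulling back finite-codimension ideals from the W-algebras $\Walg_{\Orb}$ via $\bullet^{\dagger}$, working in a family over $\param_1$ and then specializing. But the execution has real gaps.

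The paper's construction is precise in ways yours is not. For each nilpotent orbit $\Orb$ one needs the \emph{minimal} ideal of finite codimension $\I_{\Orb,\lambda_1}\subset\Walg_{\Orb,\lambda_1}$ (whose existence uses that $\Walg_{\lambda_1}$ has finite length as a bimodule over itself, \cite{B_ineq}); one then shows, following \cite[Lemma~5.1]{rouq_der}, that these assemble into an ideal $\I_{\Orb,\param_1}\subset\Walg_{\Orb,\param_1}$ with $\Walg_{\Orb,\param_1}/\I_{\Orb,\param_1}$ finitely generated over $\C[\param_1]$ and with correct Weil-generic specialization. This is a genuine argument, not a formal base-change property of $\bullet^\dagger$ as you assume. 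The paper then sets
\[
\J_{i,\param_1}:=\Bigl(\bigcap_{\Orb:\,\dim\mathcal N-\dim\Orb<2i}\I_{\Orb,\param_1}^{\dagger,\Orb}\Bigr)^{i},
\]
an intersection over orbits of small \emph{codimension}, raised to the $i$th power, and defers the verification to \cite[Lemma~5.2]{rouq_der}. You never say which finite-codimension ideals to take, never mention the power, and your indexing (via $Z_i$, the small-dimension orbits) is dual to the paper's.

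Your minimality argument is also broken. You claim that any ideal $\J$ with $\dim\VA(\U_{\lambda_1}/\J)<2i$ contains ``the corresponding primitive ideal'' for each orbit $\Orb$ of dimension $\geqslant 2i$. But there is no such thing as \emph{the} primitive ideal attached to $\Orb$ --- there are in general many, parametrized by left cells --- and even granting a choice, the containment you assert is false in general (a maximal ideal with small associated variety need not contain a primitive ideal with large associated variety). Finally, invoking \cite{LO} here is off target: those results concern integral $\lambda$, whereas a Weil generic $\lambda_1\in\param_1$ is not integral, and the paper's argument makes no use of them.
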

\begin{proof}
Observe that, for any nilpotent orbit $\Orb$ and any $\lambda_1\in \h^*$, the algebra
$\Walg_{\lambda_1}$ has finite length as a bimodule over itself, see, e.g., \cite[Theorems 1.2,1.3]{B_ineq}.
In particular, $\Walg_{\lambda_1}$ has a minimal ideal of finite codimension.
Similarly to the proof of \cite[Lemma 5.1]{rouq_der}, one can show that
there is an ideal $\I_{\param_1}\subset \Walg_{\param_1}$ such that
$\Walg_{\param_1}/\I_{\param_1}$ is finitely generated over $\C[\param_1]$
and, for a Weil generic $\lambda_1\in \param_1$, the specialization $\I_{\lambda_1}$
is the minimal ideal of finite codimension in $\Walg_1$. Then we set
$\J_{i,\param_1}:=\left(\bigcap_{\Orb} \I_{\Orb,\param_1}^{\dagger,\Orb}\right)^i$,
where the intersection is taken over all orbits $\Orb$ with $\dim \mathcal{N}-\dim \Orb<2i$,
$\I_{\Orb,\param_1}$ means the ideal $\I_{\param_1}$ for the W-algebra corresponding
to $\Orb$, and $\bullet^{\dagger,\Orb}$ has a similar meaning. Similarly to
\cite[Lemma 5.2]{rouq_der}, the ideal $\J_{i,\param_1}$ has required properties.
\end{proof}

Note that, by the construction, $(\J_{i,\lambda_1})^2=\J_{i,\lambda_1}$ when $\lambda_1$ is Weil generic.
Therefore the equality is true when $\lambda_1$ is Zariski generic as well. It follows
that the full subcategory  $\Cat_i\subset\OCat_{\lambda_1}$ consisting of all modules annihilated
by $\J_{n+1-i,\lambda_1}$ is a Serre subcategory.

\begin{Thm}\label{Thm:perversity}
For a Zariski generic $\lambda_1\in \param_1$, the functor $\mathcal{F}(\lambda_1):=\U_{\lambda_1,\chi}\otimes^L_{\U_{\lambda_1}}\bullet$
is a perverse self-equivalence of $D^b(\OCat_{\lambda_1})$
with respect  to the filtrations defined by the ideals $\J_{i,\lambda_1}$.
\end{Thm}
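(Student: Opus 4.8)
The plan is to identify $\mathcal{F}(\lambda_1)$ with a composition of a perverse equivalence coming from the long wall-crossing functor in the parabolic direction together with a $t$-exact equivalence, and to track the resulting filtrations. More precisely, recall that $\chi=\lambda w_{D_1}-\lambda$ lies in the span of fundamental weights for $D_1$, while $\param_0$ is spanned by the fundamental weights for $D\setminus D_1$; thus along $\param_1=\lambda+\param_0$ the wall-crossing is "purely in the $W_{D_1}$-direction". The first step is therefore to reduce to the case treated in Lemma~\ref{Long_wc_perv}: using the decomposition of $W$ into right $W_{D_1}$-cosets and Lemma~\ref{Lem:quot_parab}, the category $\OCat_{\lambda_1}$ is glued from copies of $\OCat(W_{D_1})$ indexed by the cosets, and the functor $\mathcal{F}(\lambda_1)=\U_{\lambda_1,\chi}\otimes^L_{\U_{\lambda_1}}\bullet$ restricts on each graded piece $\OCat_c(W)\cong\OCat(W_{D_1})$ to the long wall-crossing functor $\WC_{w_{D_1}}$ there. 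By Lemma~\ref{Long_wc_perv} each of these is perverse with the filtration by GK-codimension, and intertwines the inverse Ringel duality with an abelian equivalence.

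The second step is to assemble these coset-wise statements into a global perversity statement on $D^b(\OCat_{\lambda_1})$. Here the key point is that the filtration by the ideals $\J_{i,\lambda_1}$ is, by Proposition~\ref{Prop:ideal_chain}, exactly the filtration by GK-codimension of support (for Weil generic, hence Zariski generic, $\lambda_1$ the subcategory $\Cat_i$ consists of modules $M$ with $n-\tfrac12\operatorname{GK-}\dim M\geqslant n+1-i$, i.e.\ of support of bounded dimension), and GK-dimension of a module in $\OCat_{\lambda_1}$ is insensitive to the gluing: a module supported in the closure of a nilpotent orbit $\Orb$ has the same GK-dimension whether viewed globally or inside a single coset block. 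Thus the global filtration restricts on each graded piece to the codimension-of-support filtration, and verifying (P1)--(P3) for $\mathcal{F}(\lambda_1)$ reduces to the coset-wise statements of Lemma~\ref{Long_wc_perv} plus the compatibility of wall-crossing with the recollement $D^b(\OCat_{\prec c})\hookrightarrow D^b(\OCat_{\preceq c})\twoheadrightarrow D^b(\OCat_c)$. The latter compatibility follows because $\U_{\lambda_1,\chi}\otimes^L_{\U_{\lambda_1}}\bullet$ preserves the two-sided ideals $\J_{i,\lambda_1}$ — a deformation argument: $\U_{\param_1,\chi}$ is a bimodule over $\U_{\param_1}$ and the ideals $\J_{i,\param_1}$ are $\C[\param_1]$-flat, so tensoring preserves the filtration generically, which is (P1).

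The third step is to verify (P2) and (P3), i.e.\ that $\mathcal{F}(\lambda_1)$ is "lower triangular" with respect to the filtration and that on the graded pieces it induces an abelian equivalence. On the associated graded, the functor induced by $\mathcal{F}(\lambda_1)$ on $\Cat_i/\Cat_{i+1}$ is, after the identification with a direct sum of $\OCat(W_{D_1})$'s and their subquotients, a direct sum of the functors $H_j\circ\WC_{w_{D_1}}$, each of which is an abelian equivalence by Lemma~\ref{Long_wc_perv}; and the vanishing $H_\ell(\mathcal{F}(\lambda_1)M)=0$ for $\ell<j$ together with $H_\ell(\mathcal{F}(\lambda_1)M)\in\Cat_{i+1}$ for $\ell>j$ is exactly the perversity of the long wall-crossing functor combined with the fact that the recollement functors are $t$-exact in the appropriate sense. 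The main obstacle I expect is precisely this interaction between the deformation argument (needed to know the ideals $\J_{i,\lambda_1}$ are genuinely two-sided and preserved, and that they coincide with the support filtration at generic $\lambda_1$) and the gluing: one must be careful that "Zariski generic" suffices — that the finitely many bad conditions (loss of flatness of some $\J_{i,\param_1}$, failure of the specialization of $\I_{\Orb,\param_1}$ to be the minimal ideal, non-regularity or non-dominance) cut out a proper closed subset of $\param_1$ — so that all the coset-wise equivalences of Lemma~\ref{Long_wc_perv} and the specialization Lemma~\ref{Lem:WC_spec} apply simultaneously. Once that is in place, (P1)--(P3) follow formally from Lemma~\ref{Long_wc_perv} and the highest-weight recollement recalled in Section~\ref{S_cat}.
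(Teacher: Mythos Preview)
Your approach has a genuine gap. The coset decomposition via Lemma~\ref{Lem:quot_parab} does not interact with the filtration by the ideals $\J_{i,\lambda_1}$ in the way you claim. The filtration level of a simple $L_w$ is governed by the two-sided cell of $w$ in $W$ (equivalently, the nilpotent orbit that is the associated variety of $\U_\lambda/\J_w$), and this is \emph{not} determined by the image $w''\in W_{D_1}$: two elements $w_1'w'',\,w_2'w''$ in different cosets with the same $w''$ can lie in different two-sided cells of $W$. So the sentence ``GK-dimension of a module in $\OCat_{\lambda_1}$ is insensitive to the gluing'' is false as stated, and with it the reduction of (P2)--(P3) to the coset pieces collapses. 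Moreover, Lemma~\ref{Lem:quot_parab} is a statement about the \emph{integral} block $\OCat(W)$, whereas for integral $\lambda_1$ the element $w_{D_1}$ is not the longest element of $W^{\lambda_1}=W$, so Lemma~\ref{Long_wc_perv} does not apply; conversely, for Weil generic $\lambda_1\in\param_1$ one has $W^{\lambda_1}=W_{D_1}$ and Lemma~\ref{Long_wc_perv} applies \emph{directly} (no coset gluing needed), but then you are only at Weil generic points. You are conflating these two regimes.

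The substantive content of the theorem is precisely the passage from Weil generic to Zariski generic $\lambda_1$: the application downstream is to \emph{integral} $\lambda_1$, which are never Weil generic in $\param_1$. You acknowledge this (``one must be careful that Zariski generic suffices'') but give no mechanism. The paper's argument supplies one: it reformulates perversity as a list of conditions on the HC bimodules $\operatorname{Tor}^{\U_{\lambda_1}}_i(\B_{\lambda_1},\U_{\lambda_1}/\J_{j,\lambda_1})$ (annihilation by $\J_{j,\lambda_1}$ and $\J_{j-1,\lambda_1}$, vanishing in the wrong degrees, the induced map on the correct degree being an equivalence up to $\J_{j-1,\lambda_1}$); these conditions are checked for Weil generic $\lambda_1$ by restricting to W-algebras via $\bullet_\dagger$ and using that the W-algebra wall-crossing bimodule has Tor concentrated in middle degree on finite-dimensional modules (a consequence of the geometry of the Slodowy variety), and then propagated to Zariski generic $\lambda_1$ because each condition is the vanishing of a HC bimodule that is finitely generated over $\C[\param_1]$. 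Your deformation sketch (``$\J_{i,\param_1}$ are $\C[\param_1]$-flat, so tensoring preserves the filtration generically'') is not a substitute for this: preservation of the filtration is only (P1), and the real work is in (P2)--(P3), which require the W-algebra input.
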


In particular, we can always choose a regular integral $\lambda_1\in \h^*$ such that
$\lambda_1 w_{D_1}$ is dominant and $\mathcal{F}(\lambda_1)$ is perverse
with respect to the filtration above.

\subsection{Perversity of $\WC_{D_1}$, proof}
The proof of Theorem \ref{Thm:perversity} basically repeats the proof of \cite[Theorem 6.1]{rouq_der}.
We provide details here for readers convenience.

Let us write $\B_{\lambda_1}$ for $\U_{\lambda_1,\chi}$.

\begin{Lem}\label{Lem:perv_bimod}
For a Zariski generic $\lambda_1\in \param_1$, the following holds:
\begin{itemize}
\item[(a)] For all $i,j$, we have $\J_{j,\lambda_1}\operatorname{Tor}^{\U_{\lambda_1}}_i(\B_{\lambda_1}, \U_{\lambda_1}/\J_{j,\lambda_1})=0$.
\item[(b)] For all $i,j$, we have $\operatorname{Tor}^{\U_{\lambda_1}}_i(\U_{\lambda_1}/\J_{j,\lambda_1}, \B_{\lambda_1})\J_{j,\lambda_1}=0$.
\item[(c)] We have   $\operatorname{Tor}^{\U_{\lambda_1}}_i(\B_{\lambda_1}, \U_{\lambda_1}/\J_{j,\lambda_1})=0$
  for $i<n+1-j$.
\item[(d)] We have $\J_{j-1,\lambda_1}\operatorname{Tor}^{\U_{\lambda_1}}_i(\B_{\lambda_1}, \U_{\lambda_1}/\J_{j,\lambda_1})=
\operatorname{Tor}^{\U_{\lambda_1}}_i(\U_{\lambda_1}/\J_{j,\lambda_1},\B_{\lambda_1})\J_{j-1,\lambda_1}=0$
for $i>n+1-j$.
\item[(e)]  Set $\B_{j,\lambda_1}:=\operatorname{Tor}^{\U_{\lambda_1}}_{n+1-j}(\B_{\lambda_1}, \U_{\lambda_1}/\J_{j,\lambda_1})$.
The kernel and the cokernel of the natural homomorphism $$\B_{j,\lambda_1}\otimes_{\U_{\lambda_1}}
\operatorname{Hom}_{\U_{\lambda_1}}(\B_{j,\lambda_1}, \U_{\lambda_1}/\J_{j,\lambda_1})\rightarrow
\U_{\lambda_1}/\J_{j,\lambda_1}$$
are annihilated by $\J_{j-1,\lambda_1}$ on the left and on the right.
\item[(f)] The kernel and the cokernel of the natural homomorphism
$$\operatorname{Hom}_{\U_{\lambda_1}}(\B_{j,\lambda_1}, \U_{\lambda_1}/\J_{j,\lambda_1})\otimes_{\U_{\lambda_1}}\B_{j,\lambda_1}
\rightarrow \U_{\lambda_1}/\J_{j,\lambda_1}$$ 
are annihilated on the left and on the right by $\J_{j-1,\lambda_1}$.
\end{itemize}
\end{Lem}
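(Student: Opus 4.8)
The plan is to reduce everything to statements about W-algebras via the restriction functor $\bullet_\dagger$ and the localization results of the previous section, following the strategy of \cite[Theorem 6.1]{rouq_der}. The starting point is that all the $\operatorname{Tor}$'s and $\operatorname{Hom}$'s in question, being finitely generated over $\C[\param_1]$ in a suitable parametrized version, can be computed generically; so it suffices to verify each claim after specializing at a Weil generic $\lambda_1$ and then invoke openness to get the Zariski generic statement. For a Weil generic $\lambda_1$ the ideal $\J_{j,\lambda_1}$ is, by construction in Proposition \ref{Prop:ideal_chain}, the minimal ideal cutting out support of dimension $<2(n+1-j)$, and $\U_{\lambda_1}/\J_{j,\lambda_1}$ is a semisimple HC bimodule supported on the union of the relevant orbit closures. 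The key device is that $\bullet_\dagger$ intertwines $\operatorname{Tor}$'s and $\operatorname{Ext}$'s (as recalled in \ref{SSS_HC_bim_restr}), so applying $\bullet_\dagger$ for each orbit $\Orb$ with $\dim\mathcal{N}-\dim\Orb = 2(n+1-j)$ turns the computation into one with the finite-dimensional W-algebra bimodule $(\B_{\lambda_1})_\dagger = \B^{\Walg}_{\lambda_1 w_{D_1}\leftarrow\lambda_1}$ (Lemma \ref{Lem:WC_Walg}) and the minimal-codimension ideal $\I_{\Orb,\lambda_1}$.

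For (a) and (b): $\J_{j,\lambda_1}$ annihilates $\U_{\lambda_1}/\J_{j,\lambda_1}$ on both sides, hence it annihilates every $\operatorname{Tor}_i(\B_{\lambda_1},\U_{\lambda_1}/\J_{j,\lambda_1})$ on the appropriate side; this is immediate since $\operatorname{Tor}$ is a bifunctor on bimodules and the action of the central-ish ideal factors through the quotient. For (c), the vanishing range: after applying $\bullet_\dagger$ the bimodule $\B^{\Walg}_{\lambda_1 w\leftarrow\lambda_1}$ is invertible (it is the global sections of a quantized line bundle on the Slodowy variety $\tilde S$, hence a Morita self-equivalence of $\Walg_{\lambda_1}$), so tensoring with it is exact and detects the homological dimension; meanwhile the minimal finite-codimension quotient $\Walg_{\lambda_1}/\I_{\Orb,\lambda_1}$ has projective dimension exactly $\dim\tilde S_\Orb = \dim\Orb/2 = n+1-j$ over $\Walg_{\lambda_1}$ by the standard computation of homological dimension of a finite-dimensional quotient of a quantization of a symplectic resolution (as in \cite[Lemma 5.1]{rouq_der}), which gives the vanishing of the lower $\operatorname{Tor}$'s and pins down the top one $\B_{j,\lambda_1}$. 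Claim (d) is the ``one step of the filtration'' statement: the higher $\operatorname{Tor}$'s ($i>n+1-j$) are supported on the boundary $\partial\Orb$, so they are killed by $\J_{j-1,\lambda_1}$ by the very definition of that ideal as the one cutting out the next smaller support stratum; concretely one sees this again after $\bullet_\dagger$, where $\operatorname{Ext}^{>n+1-j}$ over the regular (in the W-algebra localization) quotient vanishes, and combines this with a support/associated-variety estimate for the parametrized bimodules.

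Finally (e) and (f) are the ``invertibility up to lower terms'' statements, which is exactly what makes the functor perverse with an abelian equivalence on each subquotient. Here the point is that, modulo $\J_{j-1,\lambda_1}$, the bimodule $\B_{j,\lambda_1}$ together with $\operatorname{Hom}_{\U_{\lambda_1}}(\B_{j,\lambda_1},\U_{\lambda_1}/\J_{j,\lambda_1})$ form a pair of mutually inverse Morita bimodules between $\U_{\lambda_1}/\J_{j,\lambda_1}$ and itself; after applying $\bullet_\dagger$ orbit by orbit this is precisely the statement that $\B^{\Walg}_{\lambda_1 w\leftarrow\lambda_1}$ (or rather its reduction mod $\I$) is an invertible bimodule, which is clear from its construction as a quantized line bundle. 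The main obstacle, and the step requiring the most care, is the passage from the generic specialization back to the bimodule-over-$\C[\param_1]$ level and the openness argument: one has to set up parametrized versions $\B_{\param_1}$, $\J_{i,\param_1}$ and check that the $\operatorname{Tor}$-modules $\operatorname{Tor}_i^{\U_{\param_1}}(\B_{\param_1},\U_{\param_1}/\J_{j,\param_1})$ are finitely generated over $\C[\param_1]$ and that their formation commutes with specialization on a dense open set, so that each of (a)--(f), which holds for Weil generic $\lambda_1$ for dimension reasons, extends to Zariski generic $\lambda_1$; this is routine in spirit (it mirrors \cite[Lemmas 5.1, 5.2]{rouq_der}) but is where the real work lies. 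Granting Lemma \ref{Lem:perv_bimod}, Theorem \ref{Thm:perversity} then follows by the standard argument: (c) gives (P2), (e)--(f) give that $H_{n+1-j}(\mathcal{F}(\lambda_1)M)$ induces an equivalence $\Cat_{n+1-j}/\Cat_{n+1-j+1}\xrightarrow{\sim}\Cat_{n+1-j}/\Cat_{n+1-j+1}$, i.e.\ (P3), and (a)--(b), (d) give (P1) together with the boundary containment in (P3).
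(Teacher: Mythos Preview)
Your overall architecture matches the paper's four-step scheme (prove (a),(b) Weil-generically; prove the W-algebra analog of (c)--(f); lift to $\U_{\lambda_1}$ via $\bullet_\dagger$; pass from Weil to Zariski generic). But the argument you give for (c) contains a genuine error that breaks the mechanism.

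You assert that $\B^{\Walg}_{\lambda_1 w_{D_1}\leftarrow\lambda_1}$ is an invertible bimodule, ``hence a Morita self-equivalence of $\Walg_{\lambda_1}$, so tensoring with it is exact.'' This is false, and if it were true it would immediately contradict (c): flatness of $\B^{\Walg}$ would force $\operatorname{Tor}_i^{\Walg_{\lambda_1}}(\B^{\Walg}_{\lambda_1},N)=0$ for all $i>0$, whereas (c) requires the Tor to sit in degree $n+1-j>0$. The bimodule is the global sections of a quantized line bundle, but $\Gamma_{\lambda_1}$ is only a \emph{derived} equivalence here ($\lambda_1 w_{D_1}$ is dominant, $\lambda_1$ is not), so no abelian Morita statement follows. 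The correct input, which the paper uses, is that for a Weil generic $\lambda_1\in\param_1$ the integral Weyl group $W^{\lambda_1}$ equals $W_{D_1}$, so $\WC_{w_{D_1}}$ is the \emph{long} wall-crossing and hence homological duality by Lemma~\ref{Long_wc_perv} (cf.\ \cite[Section~4]{BL}). This gives, for any finite-dimensional $\Walg_{\lambda_1}$-module $N$, that $\operatorname{Tor}_i^{\Walg_{\lambda_1}}(\B^{\Walg}_{\lambda_1},N)=0$ for $i\neq \tfrac{1}{2}\dim\tilde{S}$ and that $\operatorname{Tor}_{\frac{1}{2}\dim\tilde{S}}$ is a self-equivalence of $(\Walg_{\lambda_1}/\I_{\lambda_1})\operatorname{-mod}$. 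Your dimension formula is also garbled: $\dim\tilde{S}_{\Orb}=\dim\mathcal{N}-\dim\Orb=2n-\dim\Orb$, not $\dim\Orb/2$; for the open orbits $\Orb$ in $\VA(\U_{\lambda_1}/\J_{j,\lambda_1})$ one has $\dim\Orb=2(j-1)$, whence $\tfrac{1}{2}\dim\tilde{S}_{\Orb}=n+1-j$, which is the degree appearing in (c)--(f).

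A smaller gap: (a) and (b) are not as immediate as you say. The left $\U_{\lambda_1}$-action on $\operatorname{Tor}_i(\B_{\lambda_1},\U_{\lambda_1}/\J_{j,\lambda_1})$ comes from the left action on $\B_{\lambda_1}$, not from the second argument, so annihilation of $\U_{\lambda_1}/\J_{j,\lambda_1}$ by $\J_{j,\lambda_1}$ does not propagate for free and $\J_{j,\lambda_1}$ is certainly not central. One needs that these Tor's are HC bimodules with associated variety contained in $\VA(\U_{\lambda_1}/\J_{j,\lambda_1})$, together with the Weil-generic minimality and idempotence of $\J_{j,\lambda_1}$; this is exactly the paper's Step~1.
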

\begin{proof}
This lemma is an analog of \cite[Proposition 6.2]{rouq_der}. As in that proposition,
the proof is in four steps. First, we prove (a),(b) for a  Weil generic $\lambda_1$.
Then we  check the direct analogs of (c)-(f) for the ideal $\I_{\lambda_1}\subset \Walg$
(the minimal ideal of finite codimension) and the $\Walg_{\lambda_1}$-bimodule $\B_{\lambda_1,\dagger}$,
where $\lambda_1$ is Weil generic. In Step 3 we  establish (c)-(f) with an arbitrary $j$
and a Weil generic $\lambda_1$. Finally,  we will prove
the claims (a)-(f) for an arbitrary $j$ and a Zariski generic $\lambda_1$.

Step 1 is completely analogous to that of the proof of \cite[Proposition 6.2]{rouq_der}.
To prove Step 2, we first recall that $\B_{\lambda_1,\dagger}=\B^{\Walg}_{\lambda_1}$.
Also note that, by the argument in \cite[Section 4]{BL}, we have
$\operatorname{Tor}^{\Walg_{\lambda_1}}_i(\B^\Walg_{\lambda_1}, N)=0$ for any
$\Walg_{\lambda_1}/\I_{\lambda_1}$-module $N$ and $i\neq \frac{1}{2}\dim \tilde{S}$, and, moreover,
the functor $$N\mapsto \operatorname{Tor}^{\Walg_{\lambda_1}}_{\frac{1}{2}\dim \tilde{S}}(\B^{\Walg}_{\lambda_1},\bullet)$$    is a self-equivalence of $\Walg_{\lambda_1}/\I_{\lambda_1}\operatorname{-mod}$. Now the proof of Step 2
works in the same way as in \cite[Proposition 6.2]{rouq_der}.

The proofs of Steps 3 and 4 are the same as in {\it loc. cit.}
\end{proof}

The proof of Theorem \ref{Thm:perversity} now repeats that of \cite[Theorem 6.1]{rouq_der}.

Let us remark that the self-equivalence of $\Cat_{i}/\Cat_{i+1}$ induced by $\mathcal{F}(\lambda_1)$
is given by taking the tensor product with the bimodule $\B_{n+1-i,\lambda_1}$.

\subsection{Proof of (iii) of Theorem \ref{Thm:cactus}}
Note that, for a Zariski generic $\lambda_1\in \param_1$, the functor $\mathcal{F}(\lambda_1)$ coincides
with $\WC_{\lambda_1 w_{D_1}\leftarrow \lambda_1}$ provided $\lambda_1$ is regular and
dominant (note that the dominance is not a Zariski generic condition). When $\lambda_1$
is integral, we have an identification $\operatorname{Irr}(\OCat_{\lambda_1})\cong W$.
We write $\wc_{D_1}$ for the self-bijection of $W$ induced by the perverse equivalence
$\WC_{D_1}$.

Now we are going to prove (iii) of Theorem \ref{Thm:cactus}: the equality
$\wc_{D_1}(w'w'')=w'\wc^{(D_1)}_{D_1}(w'')$, where $\wc^{(D_1)}_{D_1}$ is the
analogous bijection for the category $\OCat(W_{D_1})$.

Recall that $\WC_{\lambda_1 w_{D_1}\leftarrow \lambda_1}$ is the inverse Ringel duality
provided $\lambda_1\in \param_1$ is Weil generic, Lemma \ref{Long_wc_perv}. It follows that $\WC_{\lambda_1 w_D\leftarrow \lambda_1}(\Delta(\mu))=
\nabla(\mu w_{D_1})$  for any $\mu\in W\lambda_1$. From here we deduce that $\mathcal{F}(\lambda_1)(\Delta(w\lambda_1))$
has no higher homology for a Zariski generic $\lambda_1\in \param_1$. A consequence of this
and Lemma \ref{Lem:WC_K_0} is that $\WC_{w_{D_1}}$ preserves the subcategories $D^b(\OCat(W)_{\prec c})\subset D^b(\OCat(W)_{\preceq c})$ for any right coset $c$ of $W_{D_2}$ with $D_1\subset D_2$.

Now let $c$ be a right coset for $W_{D_1}$.
We see that, for $w\in W_{D_1}$, the functors $\WC_{w}$ descend to the
the quotient $D^b(\OCat(W)_c)$ (this is because they are compositions of $\WC_{s_i}$,
where $s_i$ is a simple reflection in $W_{D_1}$). Recall, Lemma \ref{Lem:quot_parab},
that we have established the equivalence
$\OCat(W)_c\cong \OCat(W_1)$, where we write $W_1$ for $W_{D_1}$.
For $w\in W_1$, let us write $\WC_w$ for the functor of $D^b(\mathcal{O}(W)_c)$
induced by $\WC_w$ on $D^b(\mathcal{O}(W))$ and $\WC^1_w$ for the functor
on $D^b(\mathcal{O}(W_1))$.

\begin{Lem}\label{Lem:bij_coinc}
For any $w\in W_1$, there are abelian self-equivalences $\mathcal{F}_w,\mathcal{F}'_{w}$
of $\OCat(W_1)$ such that $\WC_w=\mathcal{F}_w\circ \WC^1_{w}=\WC^1_w\circ \mathcal{F}'_w$
and $\mathcal{F}_w,\mathcal{F}'_w$ induce the identity map on $K_0$.
\end{Lem}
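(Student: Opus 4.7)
The strategy is to apply the perverse equivalence machinery (Lemmas \ref{Lem:K_0_perv} and \ref{Lem:perv}) to compare $\Phi\circ\WC_w\circ\Phi^{-1}$ with $\WC^1_w$, where $\Phi\colon\OCat(W)_c\xrightarrow{\sim}\OCat(W_1)$ is the equivalence of Lemma \ref{Lem:quot_parab}. Once the two derived equivalences are matched as perverse equivalences with the same combinatorial data, the resulting t-exact self-equivalences of $\OCat(W_1)$ will give the desired $\mathcal{F}_w,\mathcal{F}'_w$, and the identity on $K_0$ will be automatic from the matching of the bijections on irreducibles.

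First I would verify that both functors act on $K_0(\OCat(W_1))=\Z W_1$ as $u\mapsto uw^{-1}$. For $\WC^1_w$ this is Lemma \ref{Lem:WC_K_0} applied inside $W_1$. For $\Phi\circ\WC_w\circ\Phi^{-1}$, Lemma \ref{Lem:WC_K_0} gives $[\WC_w]\colon u\mapsto uw^{-1}$ on $\Z W$; restricting to the subquotient $\Z\cdot c$ and transferring via $\Phi$, which sends $[\Delta_{w^1 u}]$ to $[\Delta_u]$, gives the same formula on $\Z W_1$.

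The main case is $w=w_{D_1}$. By Lemma \ref{Long_wc_perv} applied inside $W_1$, $\WC^1_{w_{D_1}}$ is a perverse self-equivalence of $D^b(\OCat(W_1))$ with respect to the filtration by GK-codimension, intertwined up to an abelian equivalence with the inverse Ringel duality. Applied inside $W$, the same lemma yields perversity of $\WC_{w_{D_1}}$ on $D^b(\OCat(W))$; combined with the already established fact that $\WC_{w_{D_1}}$ preserves the pair $D^b(\OCat(W)_{\preceq c})\supset D^b(\OCat(W)_{\prec c})$, it descends to a perverse self-equivalence of $D^b(\OCat(W)_c)$. Under $\Phi$ the induced filtration on $\OCat(W)_c$ matches, up to an overall index shift, the GK-codimension filtration on $\OCat(W_1)$ (via the parabolic induction realization of $\Phi$ in the proof of Lemma \ref{Lem:quot_parab}, which shifts GK-dimension by a constant). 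Lemma \ref{Lem:K_0_perv} then forces the bijections on irreducibles to coincide, and with both homological shifts read off from the GK-codimension Lemma \ref{Lem:perv} produces $\mathcal{F}_{w_{D_1}},\mathcal{F}'_{w_{D_1}}$ acting as the identity on $\operatorname{Irr}(\OCat(W_1))$, hence on $K_0$.

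For general $w\in W_1$, I reduce to simple reflections. For a simple reflection $s\in W_1$ both $\WC_s$ and $\WC^1_s$ are classical wall-crossing functors, perverse with the two-step filtration by the $s$-torsion subcategory arising from the cone $M\to T T^*M$, and the argument above furnishes $\mathcal{F}_s,\mathcal{F}'_s$. Choosing a reduced expression $w=s_{i_1}\cdots s_{i_k}$, the braid group action (Section \ref{SS_WC_fun}) gives $\WC_w=\WC_{s_{i_1}}\circ\cdots\circ\WC_{s_{i_k}}$ and similarly for $\WC^1_w$; iterating and invoking Corollary \ref{Cor:perv_abel_precomp} to commute the abelian corrections past intermediate wall-crossings yields $\mathcal{F}_w,\mathcal{F}'_w$, with the $K_0$-triviality preserved under composition. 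The main technical obstacle throughout is the filtration bookkeeping in the descent from $\OCat(W)$ to $\OCat(W)_c$ and its compatibility with $\Phi$: ensuring that on both sides we have perverse equivalences with genuinely the same combinatorial data is what makes Lemmas \ref{Lem:K_0_perv} and \ref{Lem:perv} applicable.
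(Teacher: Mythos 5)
Your proof is essentially correct and follows the same strategy as the paper's: induct on $\ell(w)$, treat simple reflections as the base case by matching the two-step filtrations and invoking Lemma \ref{Lem:K_0_perv} together with Lemma \ref{Lem:perv}, and pass to general $w$ via the braid relations $\WC_{s_iw}=\WC_{s_i}\circ\WC_w$ plus Corollary \ref{Cor:perv_abel_precomp}. (The paper's base case argues slightly more directly that the induced bijection on $\OCat(W)_c$ is the identity, using the observation preceding the lemma that $\wc_i$ preserves right $\langle 1,s_i\rangle$-cosets, but your route through $K_0$ reaches the same conclusion.)

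One remark: your separate ``main case $w=w_{D_1}$'' is redundant once the inductive argument is in place, since $w_{D_1}$ is covered by the reduction to simple reflections. It also contains a misattribution: Lemma \ref{Long_wc_perv} gives perversity of $\WC_{w_0}$ for the \emph{full} longest element $w_0\in W^\lambda$, not of $\WC_{w_{D_1}}$ on $D^b(\OCat(W))$ when $D_1\subsetneq D$; the correct reference for perversity of the partial long wall-crossing on $D^b(\OCat(W))$ is Theorem \ref{Thm:perversity}. Since the general induction renders this paragraph unnecessary, the error is harmless, but worth noticing.
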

\begin{proof}
The proof is by induction on the length $\ell(w)$. The base is $\ell(w)=1$, i.e.,
$w$ is a simple reflection $s_i, i\in D_1$. The endofunctor $\WC_{s_i}$ of
$D^b(\mathcal{O}(W))$ is a classical reflection functor, see
the proof of Lemma \ref{Lem:WC_K_0}. So there is only
one filtration term, it is spanned by $L_w$ with $ws_i<w$. If $ws_i<w$,
then $\WC_{s_i}L_w=L_w[-1]$. Also by the paragraph preceding the present lemma
it follows that $\wc_{i}$ preserves the right $\langle 1,s_i\rangle$-cosets.
It follows that the bijection induced by $\WC_{s_i}$ is trivial. From here
we deduce that the endofunctors $\WC_{s_i},\WC^1_{s_i}$ are perverse equivalences
with the same perversity data. Now we can use Lemma \ref{Lem:perv}
to establish the existence of $\mathcal{F}_{s_i},\mathcal{F}_{s_i'}$.

To prove the induction step we use the equalities $\WC_{s_i w}=\WC_{s_i}\circ\WC_w,
\WC^1_{s_i w}=\WC^1_{s_i}\circ \WC^1_w$ and Corollary \ref{Cor:perv_abel_precomp}.
\end{proof}

From Lemma \ref{Lem:bij_coinc}, we conclude that $\WC_{D_1},\WC^1_{D_1}$ are perverse
self-equivalences of $D^b(\OCat(W_1))$ with the same combinatorial data. Recall
that the equivalence $\OCat(W_1)\cong \OCat(W)_c$ sends $L_{w''}$ to $L_{w'w''}$.
This implies (iii) of the theorem.

\subsection{Cactus group action}\label{SS_cact_act}
Here we will verify that the self-bijections $\wc_{D_1}$ of $W$ satisfy the relations
of $\mathsf{Cact}_W$, see (\ref{eq:cactus_rel}).
\subsubsection{Relation $\wc_{D_1}^2=1$}
Recall from Lemma \ref{Lem:WC_K_0} that the functor $\WC_{D_1}$ acts on $K_0(\mathcal{O}(W))$
by the right multiplication by $w_D$ and so the action of
$\WC_{D_1}^2$ on the $K_0$ is trivial. Also note that, for $L_w\in \Cat_i\setminus \Cat_{i+1}$,
the image $\WC_{D_1}^2L_w$ equals $L_{w'}[-2i]$ modulo $D^b_{\Cat_{i+1}}(\OCat(W))$,
where $w'=\wc_{D_1}^2 w$. These two observations imply the claim.

\subsubsection{Relation $\wc_{D_1}\wc_{D_2}=\wc_{D_2}\wc_{D_1}$}
Thanks to (iii), we may assume that $D=D_1\sqcup D_2$.  Again, thanks to (iii), we have $\wc_{D_1}(w_1w_2)=\wc_{D_1}(w_1)w_2$ and $\wc_{D_2}(w_1w_2)=w_1\wc_{D_2}(w_2)$. Our claim follows.

\subsubsection{Relation $\wc_{D_1}\wc_{D_2}=\wc_{D_2}\wc_{D_1^*}$}
We can assume that $D_2=D$ thanks to (iii).

First of all, let us recall that
\begin{equation}\label{eq:W}
w_{D}^{-1}w_{D_1}w_D=w_{D_1^*}.
\end{equation}

Note that the functor $\WC_{D_1}$ preserves the filtration $D^b_{\leqslant i}(\OCat(W))
\subset D^b(\OCat(W))$ by the dimension of support. This is  because $\WC_{D_1}$ is 
given by taking the derived tensor product with a
HC bimodule. So we have an auto-equivalence $\WC_{D_1}$  of
$D^b_{\leqslant i}(\OCat(W))/D^b_{\leqslant i-1}(\OCat(W))$ that is perverse
with respect to the t-structure induced from $D^b(\OCat(W))$.
Note that the functor $\WC_D$ on  $D^b_{\leqslant i}(\OCat(W))/D^b_{\leqslant i-1}(\OCat(W))$
is t-exact up to a shift. By Lemma \ref{Lem:perv_compos_ab}, $\WC_D^{-1}\circ \WC_{D_1}\circ
\WC_D$ is a perverse self-equivalence of  $D^b_{\leqslant i}(\OCat(W))/D^b_{\leqslant i-1}(\OCat(W))$
such that the corresponding bijection is $\wc_{D}^{-1}\circ \wc_{D_1}\circ \wc_D$.
By (\ref{eq:W}) the actions of $\WC_D^{-1}\circ \WC_{D_1}\circ \WC_{D}$ and
$\WC_{D_1^*}$ on $K_0(\OCat(W)_i/\OCat(W)_{i-1})$ coincide. Applying
Lemma \ref{Lem:K_0_perv}, we finish the proof of $\wc_{D}^{-1}\circ \wc_{D_1}
\circ \wc_{D}=\wc_{D_1^*}$.

\subsection{Cells and Lusztig subgroups}
Here we will prove (i) and (ii) of Theorem \ref{Thm:cactus}.

Let $\lambda$ be a generic enough integral  element of $\param_1$
such that $\lambda w_{D_1}$ is dominant. Recall that  the subcategory
$\Cat^1_i\subset \OCat(W)$ consists
of all objects annihilated by an ideal $\J^1_{n+1-i}\subset \U_\lambda$, while the self-equivalence
$\Cat_i/\Cat_{i+1}$ is induced from taking the
tensor product with the bimodule
$\B_{n+1-i,\lambda_1}\in \HC^{1,1}_{\lambda,\lambda}(\U)$ mentioned in Lemma \ref{Lem:perv_bimod}.
 This bimodule is annihilated by $\J^1_{n+1-i}$ on the left and on
the right. Inside $\Cat_i/\Cat_{i+1}$ consider the Serre subcategory
$(\Cat_i/\Cat_{i+1})_{\overline{\Orb}}$ that is spanned by the images
of all simples in $\Cat_i$ whose associated variety is contained in $\overline{\Orb}$.
We can define $(\Cat_i/\Cat_{i+1})_{\partial\Orb}$ similarly.
So we can form the quotient $(\Cat_i/\Cat_{i+1})_{\Orb}$.
We note that, by the construction, the set $\operatorname{Pr}_{\Orb}(\U_\rho)$
splits into the union $\bigsqcup_i \operatorname{Pr}_{\Orb}(\U_\lambda)_i$
such that the simples in  $(\Cat_i/\Cat_{i+1})_{\Orb}$ are precisely those annihilated
by the primitive ideals in $\operatorname{Pr}_{\Orb}(\U_\lambda)_i$.

Since the equivalence $\Cat_i/\Cat_{i+1}\rightarrow \Cat_i/\Cat_{i+1}$
is given by taking a tensor product (from the left) with $\B_{n+1-i,\lambda_1}$,
it induces an equivalence $(\Cat_i/\Cat_{i+1})_{\Orb}\rightarrow
(\Cat_i/\Cat_{i+1})_{\Orb}$  by tensoring with
an object in the subquotient $\HC^{1,1}_{\lambda}(\U)_{\Orb}$. This self-equivalence
restricts to that of the  semisimple part
$(\Cat_i/\Cat_{i+1})_{\Orb}^{ss}\subset(\Cat_i/\Cat_{i+1})_{\Orb}$. This semisimple part is equivalent
(under the Beilinson-Bernstein equivalence) to the subcategory $\mathcal{E}_i \mathfrak{J}^{\Orb}$,
where $\mathcal{E}_i:=\sum_{\J}\U_\lambda/\J$, here the sum is taken over all
$\J\in \operatorname{Pr}_{\Orb}(\U_\lambda)_i$. From here we conclude that $\underline{\B}_i:=
\mathcal{B}_{n+1-i,\lambda_1}\otimes_{\U_\lambda}\mathcal{E}_i$ (the image of $\mathcal{E}_i$ under the equivalence)
is in $\mathcal{E}_i\mathfrak{J}^{\Orb}\mathcal{E}_i$. Tensoring from the left with this object
coincides with the self-equivalence of $(\Cat_i/\Cat_{i+1})_{\Orb}^{ss}$ of interest.

Recall the set $Y$ of the finite dimensional irreducible representations of $\Walg_\lambda$
that is equipped with an $\bar{A}$-action such that $Y/\bar{A}=\operatorname{Pr}_{\Orb}(\U_\lambda)$.
We can split $Y$ into the union $Y=\sqcup_i Y_i$ according
the decomposition of $\operatorname{Pr}_{\Orb}(\U_\lambda)$.
Then the category  $\mathcal{E}_i\mathfrak{J}^{\Orb}$ is identified with
$\operatorname{Sh}^{\bar{A}}(Y_i\times Y)$. The self-equivalence
of this category is given by convolving with
of $\underline{\mathcal{B}}_i$ viewed as an object in $\operatorname{Sh}^{\bar{A}}(Y_i\times Y_i)$.

\begin{Lem}\label{Lem:equi_convol}
Suppose the convolution with $\mathcal{F}\in \operatorname{Sh}^{\bar{A}}(Y_i\times Y_i)$ gives
an autoequivalence of $\operatorname{Sh}^{\bar{A}}(Y_i\times Y)$. Then the there is an
$\bar{A}$-equivariant bijection $\sigma:Y_i\rightarrow Y_i$ such that the graph of $\sigma$
coincides with the support of $\mathcal{F}$ and the fibers of $\mathcal{F}$ over the support
all have dimension $1$.
\end{Lem}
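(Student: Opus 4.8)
The plan is to first show that $\mathcal{F}$ is invertible for the convolution product on $\operatorname{Sh}^{\bar{A}}(Y_i\times Y_i)$, and then to read off the combinatorial structure of an invertible object. Recall that $\operatorname{Sh}^{\bar{A}}(Y_i\times Y_i)$ is monoidal under convolution, with unit the structure sheaf $\delta$ of the diagonal $Y_i\hookrightarrow Y_i\times Y_i$ (with its evident $\bar{A}$-equivariant structure), and that $\mathcal{F}$ acts on $\operatorname{Sh}^{\bar{A}}(Y_i\times Y)$ (in the relevant convention) by $(\mathcal F*\mathcal G)_{(y,z)}=\bigoplus_{y'\in Y_i}\mathcal F_{(y,y')}\otimes\mathcal G_{(y',z)}$. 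Since $Y_i\subset Y$ and both $Y_i$ and $Y\setminus Y_i$ are unions of $\bar{A}$-orbits, extension by zero along $Y_i\times Y_i\hookrightarrow Y_i\times Y$ realizes $\operatorname{Sh}^{\bar{A}}(Y_i\times Y_i)$ as a full subcategory of $\operatorname{Sh}^{\bar{A}}(Y_i\times Y)$; because convolution by $\mathcal F$ does not change the $Y$-coordinate of the support, this subcategory is preserved by $\mathcal F*(-)$, and there the functor is left convolution by $\mathcal F$.

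First I would check that $\mathcal F*(-)$ restricts to an autoequivalence of $\operatorname{Sh}^{\bar{A}}(Y_i\times Y_i)$. It is exact and, being an equivalence of the ambient category, faithful; in particular $\mathcal F*X\cong 0$ implies $X\cong 0$. It is fully faithful on the subcategory since it is so on the ambient category. For essential surjectivity, given $\mathcal H\in\operatorname{Sh}^{\bar{A}}(Y_i\times Y_i)$ choose $\mathcal G\in\operatorname{Sh}^{\bar{A}}(Y_i\times Y)$ with $\mathcal F*\mathcal G\cong\mathcal H$, and split $\mathcal G=\mathcal G'\oplus\mathcal G''$ into its parts supported over $Y_i\times Y_i$ and $Y_i\times(Y\setminus Y_i)$; then $\mathcal F*\mathcal G''$ is supported over $Y_i\times(Y\setminus Y_i)$ and must vanish, so $\mathcal G''\cong 0$ and $\mathcal G\cong\mathcal G'$ lies in the subcategory. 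Hence $\mathcal F*(-)$ is an autoequivalence of $\operatorname{Sh}^{\bar{A}}(Y_i\times Y_i)$, so there is $\mathcal G$ with $\mathcal F*\mathcal G\cong\delta$; then $\mathcal F*(\mathcal G*\mathcal F)\cong(\mathcal F*\mathcal G)*\mathcal F\cong\mathcal F\cong\mathcal F*\delta$ and faithfulness give $\mathcal G*\mathcal F\cong\delta$, so $\mathcal F$ is $*$-invertible with inverse $\mathcal G$.

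Next I would extract the bijection. Set $D_{y,y'}:=\dim\mathcal F_{(y,y')}$ and $E_{y',y''}:=\dim\mathcal G_{(y',y'')}$; these are non-negative integer $Y_i\times Y_i$-matrices, and reading off dimensions in $\mathcal F*\mathcal G\cong\delta$ gives $DE=I$. A square non-negative integer matrix with a non-negative integer inverse is a permutation matrix: no column of $D$ is zero, so each column-sum $c_{y'}$ is $\geq1$; from $(\mathbf{1}^\top D)E=\mathbf{1}^\top$ one gets $\sum_{y'}c_{y'}E_{y',y''}=1$ for every $y''$, which forces every $c_{y'}=1$ (if $c_{y_0}\geq2$ then $c_{y_0}E_{y_0,y''}=0$ for all $y''$, so row $y_0$ of $E$ is zero, contradicting invertibility); hence each column of $D$ has a single nonzero entry equal to $1$, and since $D$ is invertible this makes it a permutation matrix. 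Let $\sigma\colon Y_i\to Y_i$ be the corresponding bijection, so that $\operatorname{supp}\mathcal F$ is exactly its graph and $\dim\mathcal F_{(y,\sigma(y))}=1$. Finally, $\bar{A}$-equivariance of $\mathcal F$ makes $(y,y')\mapsto\dim\mathcal F_{(y,y')}$ an $\bar{A}$-invariant function, so $\operatorname{supp}\mathcal F$ is $\bar{A}$-stable and $\sigma$ is $\bar{A}$-equivariant, as required.

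The one step that needs genuine care is the descent to $\operatorname{Sh}^{\bar{A}}(Y_i\times Y_i)$, i.e.\ arranging that $\mathcal F*\mathcal G\cong\delta$ is solvable with $\mathcal G$ supported over $Y_i\times Y_i$; this rests on conservativity of $\mathcal F*(-)$, on convolution by $\mathcal F$ preserving the $Y$-coordinate, and on $Y_i$ being a union of $\bar{A}$-orbits in $Y$. Everything afterwards is the elementary fact about non-negative integer matrices together with transporting $\bar{A}$-invariance from $\mathcal F$ to $\sigma$.
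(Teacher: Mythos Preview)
Your proof is correct and follows essentially the same approach as the paper: obtain a two-sided convolution inverse for $\mathcal{F}$, read off from the resulting nonnegative-integer matrix identity that the dimension matrix of $\mathcal{F}$ is a permutation matrix, and use $\bar A$-equivariance of $\mathcal{F}$ to conclude that $\sigma$ is $\bar A$-equivariant. The paper simply asserts the existence of the inverse sheaf $\mathcal{F}'$ without comment, whereas you supply this step by restricting the given autoequivalence to the block $\operatorname{Sh}^{\bar A}(Y_i\times Y_i)$, so your argument is in fact more complete.
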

\begin{proof}
Let $\mathcal{F}'$ denote the inverse sheaf of $\mathcal{F}$. For any $x,y\in Y_i$, we have
$$\delta_{xy}=\sum_{z\in Y_i}\dim \mathcal{F}'_{x,z}\dim \mathcal{F}_{z,y}.$$
It follows that, for any $y\in Y_i$, there is exactly one $z$ such that
$\mathcal{F}_{z,y}\neq 0$ and in this case $\dim \mathcal{F}_{z,y}=1$.
The map $y\mapsto z$ is a required bijection. It is $\bar{A}$-equivariant because
$\mathcal{F}$ is $\bar{A}$-equivariant.
\end{proof}

The claim that $\wc_D$ preserves the right cells follows from the fact that $\wc_D$
is given by a convolution on the left. The claim that $\wc_D$ permutes the left
cells preserving the Lusztig subgroups follows from Lemma \ref{Lem:equi_convol}.
This finishes the proof of (i) of Theorem \ref{Thm:cactus}.

Let us proceed to the proof of (ii).
The construction above implies that the action of $\wc_{D_1}$ on a two-sided cell ${\bf c}$
is given by convolving with  $\mathcal{F}:=\bigoplus_i \underline{\B}_i\in \operatorname{Sh}^{\bar{A}}(Y\times Y)$
on the left.
Convolving with $\mathcal{F}^*$ on the right defines a commuting $\Cact_W$-action
on ${\bf c}$.
Clearly $(\mathcal{F}*\mathcal{G})^*=\mathcal{G}^**\mathcal{F}^*$. On the level of irreducible
objects the involution $\mathcal{G}\mapsto \mathcal{G}^*$ corresponds to
$w\mapsto w^{-1}$, see \cite[3.1 (g) and (h)]{Lusztig_leading}. This completes the proof of (ii).
\subsection{Type A}
Here we are going to prove (iv) of Theorem \ref{Thm:cactus}. For this we will explain how to compute
$\wc_D$.

The following is a direct right handed analog of \cite[Theorem 3.1]{Mathas}.

\begin{Lem}\label{Lem:KL_invol}
There is a unique involution $\sigma:W\rightarrow W$ such that
$C_w T_{w_0}=\alpha C_{\sigma(w)}+\sum_{w'\prec_{LR}w}\beta_{w'}C_{w'}$
in $\mathcal{H}_v$, where $\alpha$ is of the form $\pm v^k$ for $k\in \Z$.
\end{Lem}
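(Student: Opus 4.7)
The plan is to obtain the right-handed statement as a direct consequence of Mathas's original (left-handed) theorem by transport through the standard anti-involution of $\Hecke_v$.

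The first step is to recall the $\Z[v,v^{-1}]$-linear map $\iota:\Hecke_v\to\Hecke_v$ determined by $\iota(T_w)=T_{w^{-1}}$. It is classical that $\iota$ is an involutive anti-homomorphism, that it commutes with the bar involution, and consequently, from the Kazhdan-Lusztig characterization of the canonical basis, that $\iota(C_w)=C_{w^{-1}}$. It is equally standard that $\iota$ interchanges $\preceq_L$ with $\preceq_R$ and hence preserves $\preceq_{LR}$; in particular, $w'\prec_{LR}w$ if and only if $(w')^{-1}\prec_{LR}w^{-1}$. Since $w_0^{-1}=w_0$, one also has $\iota(T_{w_0})=T_{w_0}$.

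The second step is to apply $\iota$ to the identity provided by Mathas's Theorem 3.1:
\[
T_{w_0}C_w=\alpha_w C_{\sigma_L(w)}+\sum_{w'\prec_{LR}w}\beta_{w',w}C_{w'},\qquad \alpha_w=\pm v^{k_w},
\]
where $\sigma_L$ is the unique involution of $W$ produced there. Since $\iota$ reverses the order of multiplication, substituting $u=w^{-1}$ transforms this into
\[
C_u T_{w_0}=\alpha_{u^{-1}} C_{\sigma(u)}+\sum_{u'\prec_{LR}u}\tilde\beta_{u',u}\,C_{u'},
\]
where $\sigma(u):=\sigma_L(u^{-1})^{-1}$. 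The asserted properties are read off at once: $\alpha_{u^{-1}}$ is still of the form $\pm v^{k}$; the remaining sum is indexed by $u'\prec_{LR}u$ thanks to the $\iota$-invariance of $\preceq_{LR}$; and a direct computation using that $\sigma_L$ is an involution shows $\sigma\circ\sigma(u)=\sigma_L(\sigma_L(u^{-1}))^{-1}=u$. Uniqueness of $\sigma$ follows immediately from the linear independence of the Kazhdan-Lusztig basis.

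I do not expect any genuine obstacle: the argument is a formal transport along $\iota$ once Mathas's theorem is in hand. The only point that deserves any care is a convention check, since the normalization of $\Hecke_v$ used here (with $T_sT_w=(v-v^{-1})T_w+T_{sw}$ when $\ell(sw)<\ell(w)$) must be aligned with the one in Mathas's paper; after this cosmetic rewriting, no further calculation is required. Alternatively, if one preferred a self-contained argument, one could simply repeat Mathas's proof on the right-handed side, using that right multiplication by $T_{w_0}$ preserves the $\preceq_{LR}$-filtration (as right multiplication by any $T_y$ preserves the $\preceq_R$-filtration) and is invertible, and then extracting monomiality in the $C$-basis from Lusztig's $a$-function on the two-sided cell containing $w$.
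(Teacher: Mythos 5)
Your proposal is correct and is essentially the approach the paper takes: the paper offers no proof beyond the one-line remark that the lemma is the ``direct right handed analog of \cite[Theorem 3.1]{Mathas}'', and your argument spells out exactly how the anti-involution $\iota(T_w)=T_{w^{-1}}$ transports Mathas's left-handed statement to the right-handed one, verifying that $\iota$ sends $C_w$ to $C_{w^{-1}}$, preserves $\preceq_{LR}$, fixes $T_{w_0}$, and that $\sigma(u)=\sigma_L(u^{-1})^{-1}$ is again an involution.
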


\begin{Lem}\label{Lem:cactus_KL}
We have $\wc_D=\sigma$.
\end{Lem}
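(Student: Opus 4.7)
The plan is to compute $[\WC_D L_w]$ in $K_0(\OCat(W)) \cong \Z W$ in two different ways and match them through the perverse structure of $\WC_D$. By Lemma \ref{Lem:WC_K_0}, under the identification $[\Delta_w] \mapsto w$, the map $[\WC_D]$ is right multiplication by $w_0$. By the Kazhdan--Lusztig conjecture recalled in \ref{SSS_O}, the class $[L_w]$ is the specialization of $C_w$ at $v=1$, while $T_{w_0}$ specializes to $w_0$. Specializing the identity of Lemma \ref{Lem:KL_invol} at $v=1$ therefore yields
\begin{equation*}
[\WC_D L_w] \;=\; [L_w]\cdot w_0 \;=\; \epsilon\, [L_{\sigma(w)}] \;+\; \sum_{w' \prec_{LR} w} \gamma_{w'} [L_{w'}]
\end{equation*}
with $\epsilon\in\{\pm 1\}$ and $\gamma_{w'}\in\Z$.

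Next I would compare this with the filtration used in the perversity statement for $\WC_D$ (Lemma \ref{Long_wc_perv}), where $\Cat_j$ consists of modules $M$ with $n-\operatorname{GK-dim} M \geqslant j$. The classical correspondence between two-sided cells and special nilpotent orbits (recalled in \ref{SSS_O}, together with $\VA(L_w)$ being determined by the two-sided cell of $w$) identifies strict descent in the two-sided KL order with strict shrinking of the associated variety. Thus for $L_w \in \Cat_j \setminus \Cat_{j+1}$, every $L_{w'}$ with $w' \prec_{LR} w$ lies in $\Cat_{j+1}$. Moreover, since right multiplication by $T_{w_0}$ is an automorphism of $\Hecke_v$ preserving the filtration by two-sided KL ideals, $\sigma$ preserves each two-sided cell, so $L_{\sigma(w)}$ lies in $\Cat_j \setminus \Cat_{j+1}$.

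The perversity of $\WC_D$ then gives $[\WC_D L_w] \equiv (-1)^j[L_{\wc_D(w)}] \pmod{K_0(\Cat_{j+1})}$, and $\wc_D(w)$ is automatically in the same two-sided cell as $w$ by (i) of Theorem \ref{Thm:cactus}. Combining with the $K_0$-identity above produces
\begin{equation*}
(-1)^j\,[L_{\wc_D(w)}] \;\equiv\; \epsilon\,[L_{\sigma(w)}] \pmod{K_0(\Cat_{j+1})},
\end{equation*}
and the linear independence of the classes of simples in $\Cat_j/\Cat_{j+1}$ forces $\wc_D(w)=\sigma(w)$. The main obstacle will be the second step: rigorously matching the algebraic two-sided KL filtration on $\Hecke_v$ (specialized at $v=1$) with the geometric filtration by GK-dimension on $\OCat(W)$, and ensuring that the leading term $L_{\sigma(w)}$ genuinely survives in $\Cat_j/\Cat_{j+1}$ rather than dropping into a deeper layer of the filtration.
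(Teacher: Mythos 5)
Your proof is correct and takes essentially the same route as the paper: compute $[\WC_D L_w]$ on $K_0$ via Lemma \ref{Lem:WC_K_0}, specialize Lemma \ref{Lem:KL_invol} at $v=1$, and compare the leading terms modulo the GK-dimension filtration using perversity. The "obstacle" you flag at the end --- matching the two-sided KL order with the GK-dimension filtration and making sure $L_{\sigma(w)}$ stays in the top layer --- is exactly what the paper compresses into the single assertion $[\WC_D L_w] \pm [L_{\wc_D w}] \in I^{LR}_{\prec w}$; it is handled by the standard facts that $\operatorname{GK-dim} L_w = n - a(w)$, that $a$ is strictly increasing along $\prec_{LR}$ (Lusztig's P4 and P11), and that $\sigma$ preserves two-sided cells (immediate from Lemma \ref{Lem:KL_invol} and $\sigma^2 = \mathrm{id}$), so your write-up is a correct expansion rather than a gap.
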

\begin{proof}
Let $c_w\in \C W$ stand for the specialization of $C_w$ to $v=1$
so that $c_w=[L_w]$. We know that $[\WC_{D}L_w]\pm [L_{\wc_Dw}]\in
I^{LR}_{\prec w}$, where $I^{LR}_{\prec w}$ is the span of $c_{w'}$
with $w'\prec_{LR} w$. But $[\WC_D L_w]=[L_w]w_0$ by Lemma \ref{Lem:WC_K_0}.
By Lemma \ref{Lem:KL_invol}, $[L_w]w_0\pm c_{\sigma(w)}\in I^{LR}_{\prec w}$.
This implies the claim of the present lemma.
\end{proof}

Now let us assume that $W=\mathfrak{S}_n$. Recall that $\mathfrak{S}_n$
is in bijection (RSK) with the set of pairs $(P,Q)$ of standard Young
tableaux of the same shape. Let us write $(P_w,Q_w)$ for the tableaux
corresponding to $w$. Recall that $w\sim_L w'$ if and only if $Q_{w}=Q_{w'}$
and that $P_{w}=Q_{w^{-1}}, Q_w=P_{w^{-1}}$. Now it follows from
\cite[Section 3.6]{Mathas} that $Q_{\sigma(w)}=Q_{w}^*$, where $\bullet^*$
denotes the Sch\"{u}tzenberger involution uniquely specified by $Q_w^*:=Q_{w_0ww_0}$.

\begin{Ex}\label{Ex:n3}
Consider the case $n=3$. Here we have four  right cells: $\{\operatorname{id}\},
\{(12),(231)\}$, $\{(23),(312)\}$, $\{31\}$. The Sch\"{u}tzenberger involution
swaps the two standard Young tableaux of shape 21. So it swaps the elements
of the right cells with two elements.
\end{Ex}

The last example together with (iii) of Theorem \ref{Thm:cactus} shows that
the bijections $\wc_{i,i+1}$ are elementary Knuth transforms. It is known
that one can get any element of a given right cell from another element of
the same cell using a sequence of these transforms. This completes the proof
of (iv).

\section{Ramifications}\label{S_ramif}
\subsection{Unequal parameters and affine type}
An analog of Theorem \ref{Thm:cactus} should be true for Hecke algebras with unequal parameters
as well (though it is unclear whether Lusztig subgroups make sense in that generality).
Lusztig has proved a direct analog of Lemma \ref{Lem:KL_invol} in that setting,
see \cite{Lusztig_involution}. Then one can use (iii) of Theorem \ref{Thm:cactus}
to define the bijections $\mathfrak{wc}_{D_1}$ for an arbitrary subdiagram
$D_1\subset D$. The first two relations in (\ref{eq:cactus_rel}) come for
free and the last one should not be difficult to check.

The strategy of the first paragraph should work for $D$ of affine type as well.
The cactus group is defined in the same way (one  considers subdiagrams
$D_1$ of finite type only). In the equal parameter case, the action should have  a
categorical interpretation via affine categories $\mathcal{O}$, as in Section \ref{SS_cact_act}.

\subsection{Categories $\mathcal{O}$ for quantized symplectic resolutions
and Rational Cherednik algebras}
Our techniques should generalize to the case of categories $\mathcal{O}$ over quantizations
of symplectic resolutions, see \cite{BLPW}, and, perhaps, to categories $\mathcal{O}$
over Rational Cherednik algebras, \cite{GGOR}.

Let us elaborate on the symplectic resolution setting. For the  definitions and details a reader is
referred to \cite{BPW,BLPW}. An outcome is that we have two vector spaces,
the $\C$-space $\mathfrak{p}$ of quantization parameters with fixed rational
form $\mathfrak{p}_{\mathbb{Q}}$ and also a $\mathbb{Q}$-space $\mathfrak{s}$
(of rational co-characters of a certain  torus $T$). Both $\mathfrak{p}_{\mathbb{Q}}$
and $\mathfrak{s}$ come with a finite collection of codimension 1 subspaces
(walls) that split them into chambers and with integral lattices $\mathfrak{p}_{\Z},
\mathfrak{s}_{\Z}$.

For any $\theta\in \mathfrak{p}_{\mathbb{Q}}$ and $\nu\in
\mathfrak{s}_{\mathbb{Q}}$ that lie inside their chambers and any $\lambda\in \mathfrak{p}$,
we have a highest weight category $\mathcal{O}(\theta,\nu,\lambda)$. This category
depends only on the chambers of $\theta,\nu$ and $\lambda+\mathfrak{p}_{\Z}$. The simple
objects in these categories are labelled by the fixed points of $T$
on a certain symplectic variety $X$ (that is a symplectic resolution
of interest). For fixed $\lambda$, the categories $\OCat(\theta,\nu,\lambda)$
are derived equivalent: there are wall-crossing functors that switch $\theta$
that were introduced in \cite[Section 6.4]{BPW}, and cross-walling functors
introduced in \cite[Section 8.2]{BLPW}. The latter were  proved to be equivalences in
\cite[Theorem 6.3]{CW}.

The long wall-crossing functor (that switches the chamber of
$\theta$ to the opposite) was shown to be a perverse equivalence
in \cite[Section 4]{BL}. Using this and techniques of \cite[Section 6]{rouq_der}
one can show that wall-crossing functors through faces are perverse equivalences.
The corresponding bijections should satisfy the relations in the cactus groupoid
for the hyperplane arrangement in $\mathfrak{p}_{\mathbb{Q}}$.

The long cross-walling functor was shown in \cite{CW} to be an
inverse Ringel duality up to a homological shift. Since the inverse Ringel
duality coincides with the long wall-crossing, this allows to define
the long cross-walling bijection. To define these bijections for
arbitrary faces of the hyperplane arrangement in $\mathfrak{s}_{\mathbb{Q}}$
one can use \cite[Proposition 6.4]{CW}. Again, the resulting bijections
should satisfy cactus relations.

One can define the notions of left and right cells in $X^T$
(this depends on $\lambda$), see \cite[Section 7.5]{BLPW}.
By the definition there, the wall-crossing bijections
preserve the right cells, while the cross-walling bijections
should preserve the left cells.

The case considered in the present paper corresponds to $X=T^*(G/B)$,
$T$ being a maximal torus of $B$ and integral $\lambda$. In this case,
all categories $\mathcal{O}(\theta,\nu,\lambda)$ are naturally identified.
In a subsequent paper, we will consider in detail the case when
$X$ is a Nakajima quiver variety of affine type (the set $X^T$
has to do with multipartitions in this case).

Another setting, where one can define wall-crossing bijections,
is for categories $\mathcal{O}$ for Rational Cherednik algebras
$H_{1,c}(\Gamma)$, where $\Gamma$ is a complex reflection group.
The labelling set for the simple objects in this case is
$\operatorname{Irr}\Gamma$.
Wall-crossing functors (for a suitable hyperplane arrangement)
were introduced in \cite{rouq_der}. It was proved in that case that
wall-crossing through faces are perverse. The corresponding
wall-crossing bijections should be viewed as generalizations
and extensions of the Mullineux involution.



\begin{thebibliography}{99}
\bibitem[BG]{BG} J. Bernstein, S. Gelfand. {\it Tensor products of finite and infinite dimensional representations of
semisimple Lie algebras}. Compositio Mathematica, 41(1980), n.2, p. 245-285.
\bibitem[BLu]{Bern_Lunts} J. Bernstein, V. Lunts, {\it Localization for derived categories of $(\g,K)$-modules}, J. Amer. Math. Soc. 8 (1995), no. 4, 819-856.
\bibitem[BFO]{BFO} R. Bezrukavnikov, M. Finkelberg, V. Ostrik, {\it Character D-modules via Drinfeld center of Harish-Chandra bimodules}. Invent. Math.  188  (2012),  no. 3, 589-620.
\bibitem[BLo]{BL} R. Bezrukavnikov, I. Losev. {\it Etingof conjecture for quantized quiver varieties}. arXiv:1309.1716.
\bibitem[BMR]{BMR} R. Bezrukavnikov, I. Mirkovic, D. Rumynin.
{\it Singular localization and intertwining functors for reductive Lie algebras in prime characteristic}.
Nagoya Math. J.  184  (2006), 1–55.
\bibitem[BR]{BR} C. Bonnafe, R. Rouquier. {\it Cellules de Calogero-Moser}. arXiv:1302.2720.
\bibitem[BLPW]{BLPW} T. Braden, A. Licata, N. Proudfoot, B. Webster, {\it Quantizations of conical symplectic resolutions II: category O and symplectic duality}. arXiv:1407.0964.
\bibitem[BPW]{BPW} T. Braden, N. Proudfoot, B. Webster, {\it Quantizations of conical symplectic resolutions I: local and global structure}. arXiv:1208.3863.
\bibitem[DJS]{DJS} M. Davis, T. Januskiewitz, R. Scott. {\it Fundamental groups of blow-ups}.
Adv. Math. 177 (2003), n.1, 115-179.
\bibitem[DK]{DK} C. Dodd, K. Kremnizer. {\it A Localization Theorem for Finite W-algebras}.
arXiv:0911.2210.
\bibitem[GG]{GG} W.L. Gan, V. Ginzburg. {\it Quantization of Slodowy slices}. IMRN, 5(2002), 243-255.
\bibitem[G]{Ginzburg_HC} V. Ginzburg. {\it Harish-Chandra bimodules for quantized Slodowy
slices}.  Represent. Theory, 13(2009), 236-271.
\bibitem[GGOR]{GGOR} V. Ginzburg, N. Guay, E. Opdam and R. Rouquier, {\it On the category $\mathcal{O}$ for rational
Cherednik algebras}, Invent. Math., {\bf 154} (2003), 617-651.
\bibitem[HK]{HK} A. Henriques, J. Kamnitzer. {\it Crystals and coboundary categories}. Duke Math. J.  132  (2006), 191-216.
\bibitem[L1]{HC} I.V. Losev. {\it Finite dimensional representations of
W-algebras}. Duke Math J. 159(2011), n.1, 99-143.
\bibitem[L2]{Gies} I. Losev. {\it Etingof conjecture for quantized quiver varieties II: affine quivers}.
arXiv:1405.4998.
\bibitem[L3]{rouq_der} I. Losev. {\it Derived equivalences for Rational Cherednik algebras}. arXiv:1406.7502.
\bibitem[L4]{B_ineq} I. Losev. {\it Bernstein inequality and holonomic modules}. arXiv:1501.01260.
\bibitem[L5]{CW} I. Losev. {\it On categories $\mathcal{O}$ for quantized symplectic resolutions}. arXiv:1502.00595.
\bibitem[LO]{LO} I. Losev, V. Ostrik, {\it Classification of finite dimensional irreducible modules
over W-algebras}. Compos. Math. 150(2014), N6, 1024-1076.
\bibitem[Lu1]{Lusztig_orange}  G. Lusztig. {\it Characters of reductive groups over a finite field}, Ann. Math. Studies 107, Princeton University Press (1984).
\bibitem[Lu2]{Lusztig_leading} G. Lusztig.  {\it Leading coefficients of character values of Hecke algebras}, Proc.Symp.Pure Math.47(2), Amer.Math.Soc. 1987, 235-262.
\bibitem[Lu3]{Lusztig_involution} G. Lusztig. {\it Action of longest element on a Hecke algebra cell module}.
arXiv:1406.0452.
\bibitem[Ma]{Mathas} A. Mathas. {\it On the left cell representations of Iwahori-Hecke algebras of finite Coxeter groups}.
J. London Math. Soc. (2)  54  (1996),  no. 3, 475-488.
\bibitem[Mi]{Milicic} D. Milicic.  {\it Localization and Representation Theory of Reductive Lie Groups}, available at
http://www.math.utah.edu/$\sim$milicic.
\bibitem[P1]{Premet1} A. Premet. {\it Special transverse slices and their enveloping algebras}. Adv. Math. 170(2002), 1-55.
\bibitem[P2]{Premet2} A. Premet. {\it Enveloping algebras of Slodowy slices and the Joseph ideal}.
J. Eur. Math. Soc, 9(2007), N3, 487-543.
\bibitem[R]{Rouquier_ICM} R. Rouquier. {\it Derived equivalences and finite dimensional algebras}.  International Congress of Mathematicians. Vol. II,  191-221, Eur. Math. Soc., Z\"{u}rich, 2006.
\bibitem[S]{Soergel} W. Soergel. \'{E}quivalences de certaines cat\'{e}gories de $\g$-modules, C. R. Acad.
Sci. Paris S\'{e}r. I Math. 303 (1986), no. 15, 725-728.
\end{thebibliography}
\end{document}